\newfont{\sdbl}{msbm9}
\newfont{\dbl}{msbm10 at 12pt}
\theoremstyle{definition}
\newcommand{\da}{{\mbox{\dbl A}}}
\newcommand{\dw}{{\mbox{\dbl W}}}
\newcommand{\cn}{{{\cal N}}}
\newcommand{\ca}{{{\cal A}}}
\newcommand{\cl}{{{\cal L}}}
\newcommand{\cg}{{{\cal G}}}
\newcommand{\dpp}{{\mbox{\dbl P}}}
\newcommand{\dz}{{\mbox{\dbl Z}}}
\newcommand{\dn}{{\mbox{\dbl N}}}
\newcommand{\sdn}{{\mbox{\sdbl N}}}
\newcommand{\sdp}{{\mbox{\sdbl P}}}
\newcommand{\dc}{{\mbox{\dbl C}}}
\newcommand{\dq}{{\mbox{\dbl Q}}}
\newcommand{\ord}{\mathop{\rm ord}\nolimits}
\newcommand{\Pic}{\mathop {\rm Pic}}
\newcommand{\Adm}{\mathop {\rm Adm}}
\newcommand{\Div}{\mathop {\rm Div}}
\newcommand{\WDiv}{\mathop {\rm WDiv}}
\newcommand{\LT}{\mathop {\rm LT}}
\newcommand{\Sup}{\mathop {\rm Supp}}
\newcommand{\Quot}{\mathop {\rm Quot}}
\newcommand{\rk}{\mathop {\rm rk}}
\newcommand{\Spec}{\mathop {\rm Spec}}
\newcommand{\Proj}{\mathop {\rm Proj}}
\newcommand\limind{\mathop{\underrightarrow{\lim}}}
\newtheorem{defin}{Definition}[section]
\newtheorem{nt}{Remark}[section]
\newtheorem{ex}{Example}[section]
\theoremstyle{plain}
\newtheorem{prop}{Proposition}[section]
\newtheorem{theo}{Theorem}[section]
\newtheorem{lemma}{Lemma}[section]
\newtheorem{corol}{Corollary}[section]
\newcommand{\eqdef}{\stackrel{\rm def}{=}}
\newcommand{\lto}{\longrightarrow}
\newcommand{\Ord}{\mathop {\rm \bf ord}}
\newcommand{\Rk}{\mathop {\rm \bf rk}}
\newcommand{\co}{{{\cal O}}}
\newcommand{\cf}{{{\cal F}}}
\newcommand{\cq}{{{\cal Q}}}
\newcommand{\cm}{{{\cal M}}}
\newcommand{\cw}{{{\cal W}}}
\newcommand{\cb}{{{\cal B}}}
\newcommand{\cp}{{{\cal P}}}
\title{Geometric properties of commutative subalgebras of partial differential operators}
\author{Herbert Kurke \quad Alexander Zheglov}
\date{}
\begin{document}

\maketitle

\begin{abstract}
We investigate further alebro-geometric properties of commutative rings of partial differential operators continuing our  research started in previous articles. In particular, we start to explore the most evident examples and also 
certain known examples of algebraically integrable quantum completely integrable systems from the point of view of a recent generalization of Sato's theory which belongs to the second author. We give a complete characterisation of the spectral data for a class of "trivial" rings and strengthen geometric properties known earlier for a class of known examples. We also define a kind of a restriction map from the moduli space of coherent sheaves with fixed Hilbert polynomial on a surface to analogous moduli space on a divisor (both the surface and divisor are part of the spectral data). We give several explicit examples of spectral data and corresponding rings of commuting (completed) operators, producing as a by-product interesting examples of surfaces that are not isomorphic to spectral surfaces of any (maximal) commutative ring of PDOs of rank one. At last, we prove that any commutative ring of PDOs, whose normalisation is isomorphic to the ring of polynomials $k[u,t]$, is a Darboux transformation of a ring of operators with constant coefficients.  

\end{abstract}


\section{Introduction}

\subsection{}\label{s1.1}

In this paper we continue the study of algebro-geometric properties of commutative algebras of partial differential operators (PDO for short) in two variables started in \cite{Ku3}. Everywhere in this paper we assume that $k$ is a field of characteristic zero.

Recall that one of very complicated questions appearing in the theory of algebraically integrable systems is: how to find explicit examples of certain commutative rings of PDOs or how to classify them (see \cite[Introduction]{Ku3} for an extensive history). This question can also be reformulated in the following way. In \cite{BEG} the quantum analogue of the classical definition of an integrable Hamiltonian system was defined. By a quantum completely integrable system (QCIS) on an algebraic variety $X$ the authors understand a pair $(\Lambda ,\theta )$, where $\Lambda$ is an irreducible $n$-dimensional affine algebraic variety, and $\theta :\co_{\Lambda}\rightarrow D(X)$ is an embedding of algebras (here the algebra $D(X)$ of differential operators on $X$ is the quantum analogue of the Poisson algebra $\co (T^*X)$). 

By definition, a QCIS $S=(\Lambda ,\theta )$ is said to be algebraically integrable if it is dominated by another QCIS $S'$ with $rk (S')=1$ (see loc. cit.), where the rank of QCIS is the dimension of the space of formal solutions of the system 
$$
\theta (g)\psi =g(\lambda )\psi , \mbox{\quad} g\in \co_{\Lambda }
$$
near a generic point of $X$. In \cite{BEG} these definitions were also generalized to the case of integrable systems on a formal polydisc. Thus, in this case $X$ is $\Spec (k[[x_1,x_2,\ldots ,x_n]])$ and the symbols $\co_X, k(X),D(X)$ denote respectively $k[[x_1,\ldots ,x_n]]$, $k((x_1,\ldots ,x_n))$, $\co_X[\partial_1,\ldots ,\partial_n]$, where $\partial_i=\partial /\partial x_i$. In this situation for $n=1$ even the classification of all algebraically integrable commutative subalgebras $B=\theta (\Lambda )\subset D(X)$ in terms of the spectral data is known since the work of Krichever \cite{Kr1}, \cite{Kr}. In \cite{BEG} the criterion for algebraic integrability of QCIS's is given in terms of the corresponding Galois groups. 

\subsection{}\label{s1.2}
In this paper we continue to explore geometric properties of commutative rings of PDOs in $D=k[[x_1,x_2]][\partial_1, \partial_2]$ started in \cite{Ku3} (the restriction $n=2$ seems to be basically not essential, but in general case one needs to do some work to generalize a number of statements from our previous papers). Recall that even in this case there is still no classification of algebraically integrable (in the above sense) commutative subalgebras in terms of spectral data, though there is a classification of subalgebras in a completed ring of differential operators (see \cite{Zhe2}, cf. \cite[Introduction]{Ku3}) in terms of Parshin's modified geometric data (which include algebraic projective surface, an ample $\dq$-Cartier divisor, a point regular on this divisor and on the surface, a torsion free sheaf on the surface and some extra trivialisation data). Moreover, up to now  only a few examples of such algebras are known. Probably the first nontrivial (in certain sense, see discussion below) examples  appeared in \cite{ChV}, \cite{ChV2}, \cite{ChV3}. The examples were connected  with the quantum (deformed) Calogero-Moser systems (cf.  \cite{OP}).
Later the ideas of these constructions  were developed in a series of papers (see e.g. \cite{FV}, \cite{FV2}, \cite{EG}) in order to construct more examples (for review see e.g. \cite{Ch} and references therein; cf. also \cite{BEG}, \cite{BEGa}, \cite{BeK}). Let's also mention that the idea to construct a free BA-module (the module consisting of eigenfunctions of the ring of PDO) was developed later by various authors (see e.g. \cite{Na}, \cite{Mi}, \cite{Ch}) to produce explicit examples of commutative matrix rings of PDO.

 In \cite{Zhe2}, \cite{Ku3} several properties of the above mentioned geometric data were investigated. In particular, all algebraically integrable commutative rings of PDOs correspond to rank one geometric data with $X$ Cohen-Macaulay, $C$ rational and $C^2=1$, $\cf$ torsion free of rank one and Cohen-Macaulay along $C$. In this paper we strengthen  the last property: namely, we show that any commutative subalgebra of PDOs (satisfying as in \cite{Ku3} certain mild conditions) leads to a sheaf $\cf$ on $X$ which is Cohen-Macaulay (theorem \ref{CMmodules}). 
 
 Cohen-Macaulay rank one torsion free sheaves appearing as sheaves from geometric data classifying commutative subalgebras of (completed) operators with fixed spectral surface can be parametrized by a moduli space which is an open subscheme of the projective scheme parametrising semistable sheaves with fixed Hilbert polynomial (see remark \ref{moduli_space}). We introduce in this paper a kind of restriction map $\zeta$ from this moduli space to the moduli space of coherent torsion free rank one sheaves on the divisor $C$ (see section \ref{mapzeta}, remark \ref{moduli_space}) and formulate a conjecture that this morphism is surjective (remark \ref{moduli_space}). It is important to study this moduli space in order to find new examples of algebraically integrable systems or to classify commutative algebras of PDOs. We hope to return to this question in future works. 
 
 This moduli space can be thought of as another analogue of the Jacobian of the curve in the context of the classical KP theory. Recall that in the work \cite{Pa0} Parshin offered to consider a multi-variable analogue of the KP-hierarchy which, being modified, is related to algebraic surfaces and torsion free sheaves on such surfaces as well as to a wider class of geometric data consisting of ribbons and torsion free sheaves on them if the number of variables is equal to two (see \cite{Zhe},  \cite[Introduction]{Ku1}). In the work \cite{Ku1} we described the geometric structure of the Picard scheme of a ribbon. This scheme has a nice group structure and can be thought of as an analogue of the Jacobian of a curve in the context of the classical KP theory. In particular, generalized  KP flows are defined on such schemes (flows defined by the multi-variable analogue of the KP-hierarchy). The disadvantage of the Picard scheme of a ribbon is its infinite-dimensionality. The moduli space we have mentioned above is finite dimensional. The generalized  KP flows are also defined on it. It is not difficult to show that it can be embedded into the Picard scheme of a ribbon. 
 
 Investigating already existing examples of commutative algebras mentioned above we prove a theorem (\ref{Darboux}) about algebraically integrable commutative rings of PDOs whose affine spectral surface is rational. Such rings appeared, for example, in papers \cite{FV}, \cite{FV2}, \cite{EG}, \cite{BeK}. In the examples from these papers the normalisation of the affine spectral surface is known to be $\da^2$. 
 In \cite{BeK} the authors gave a method of producing new non-trivial examples of commutative rings of PDOs using the Darboux transformation. We show in theorem \ref{Darboux} that all rings with this property of the affine spectral surface are Darboux transformations of rings of operators with constant coefficients. As a by-product we also give a geometric characterisation of certain completion of $\da^2$ (see theorem \ref{completion_of_plane}): a completion of $\da^2$, whose divisor at infinity is an ample irreducible $\dq$-Cartier divisor with self-intersection   index 1, is $\dpp^2$. This result could be probably proved by classical methods of algebraic geometry using old results of Morrow (\cite{Mo}) or relatively new results of Kojima, Takahashi (\cite{Ko}) (we would like to thank M.Gizatoulline and T.Bandman for pointing out these works),  but we used instead only some ideas from our theory of ribbons and (or alternately) the construction of the generalized Krichever-Parshin map.
 
 It is reasonable to ask if there are examples of algebraically integrable commutative rings of PDOs whose spectral surface is isomorphic to a given one. We give here two counterexamples (\ref{counterex_aff}, \ref{ex1}), both for affine and projective spectral surfaces. 

Another natural question is: how to characterise those commutative algebras which consist of operators not depending on $x_1$ or $x_2$. We call these algebras "trivial", because one can easily construct such algebras taking commutative subalgebras of one-variable operators and adding a derivation with respect to another variable. Surprisingly the geometry of spectral data  is not so trivial for these algebras. We give a description (theorem \ref{trivial}) of such algebras in terms of geometric data. 

At last, we give examples (\ref{ex1}, \ref{ex2}) of surfaces for which it is possible to describe all sheaves from the moduli space mentioned above and calculate all corresponding rings of commuting (completed) operators. All these rings are "trivial". 

\subsection{}\label{s1.3}

The paper is organized as follows:

In section \ref{data} we recall basic definition of geometric data from \cite{Zhe2} and give also an alternative definition of these data. 

In section \ref{pairs} we recall the construction of Schur pairs associated with geometric data. 

In section \ref{mapzeta} we introduce the restriction map $\zeta$ and prove several technical lemmas.

In section \ref{CRO} we recall basic definitions and properties of the ring of completed operators, recall the classification theorem from \cite{Zhe2} and prove additional technical lemmas needed in the rest of the paper.

In section \ref{sec.2.3} we recall and prove some properties of Schur pairs corresponding to geometric data with sheaves whose Hilbert polynomial is fixed. For understanding the map $\zeta$ the most important properties are formulated in proposition \ref{zero_cohomology}. We also formulate a conjecture about the map $\zeta$. 

In section 3 we prove theorems about CM-property, completion of plane and Darboux transformations mentioned above. 

In section 4 we give the description of "trivial" algebras and examples.  

{\bf Acknowledgements.} Part of this research was done at the Humboldt University of Berlin during a stay supported by the Vladimir Vernadskij stipendium of MSU-DAAD (Referat 325-paw, Kennziffer A/12/89240). Part of this research was done at the Max Planck Institute of Mathematics during a research stay of the second author (1-30 September 2013). 
He would like to thank the MPI for the excellent working conditions. 

The first author would like to thank the Department of Differential Geometry and Applications of Moscow State University for hospitality during his stay in Moscow in May 2012, where some aspects of this work were planned.  

We are also grateful to Igor Burban, Antonio Laface and Denis Osipov for their permanent interest and many stimulating discussions.

The second author was partially supported by the RFBR grant no.~14-01-00178-a, 13-01-00664à and by grant NSh no.~581.2014.1.

\section{Preliminaries}

In this work we usually use standard notation from algebraic geometry used e.g. in the book \cite{Ha}. We also use some notation from our previous papers \cite{Zhe2}, \cite{Ku3}.  

On the two-dimensional local field $k((u))((t))$ we will consider the following discrete valuation of rank two
$\nu \, : \, k((u))((t))^* \to \dz \oplus \dz $:
$$
\nu (f) = (m,l)  \quad \mbox{iff} \quad f= t^lu^m f_0 \mbox{, where} \quad f_0 \in k[[u]]^*+t k((u))[[t]] \mbox{.}
$$
(Here $k[[u]]^*$ means the set of invertible elements in the ring $k[[u]]$.) We also define the discrete valuation of rank one
$$
\nu_t(f)=l.
$$

\subsection{Geometric data}
\label{data}

 In this subsection we recall definitions from \cite{Zhe2}, \cite{Ku3}; we slightly change some definitions from loc.cit. to simplify the exposition and to avoid explaining certain technical details. 

For any $n$-dimensional irreducible projective variety $X$ over the field $k$, and any Cartier divisors $E_1, \ldots, E_n \in \Div(X)$ on $X$ one defines the intersection index $(E_1 \cdot \ldots \cdot E_n)  \in \dz$ on $X$ (see, e.g.,~\cite{Fu}, \cite[ch.~1.1]{La}.)
Let $(E^n)= (E \cdot \ldots \cdot E)$ be the self-intersection index of a Cartier divisor $E \in \Div(X)$ on $X$, and $\cf$ be a coherent sheaf
on $X$. There is the asymptotic Riemann-Roch theorem (see survey in~\cite[ch.~1.1.D]{La}) which says that  the Euler characteristic
$\chi(X, \cf \otimes_{\co_X} \co_X(mE))$ is a polynomial of degree $\le n$ in $m$, with
\begin{equation}  \label{rrf}
\chi(X, \cf \otimes_{\co_X} \co_X(mE)) = \rk(\cf) \cdot \frac{(E^n)}{n!} \cdot m^n + O(m^{n-1}) \mbox{,}
\end{equation}
where $\rk$ is the rank of the sheaf.

There is the cycle map: ${\rm Z} : \Div(X) \to \WDiv(X)$ from the Cartier divisors to the Weil divisors on $X$ (see \cite[Appendix A]{Ku3}). If $E_1, E_2 \in \Div(X)$ such that ${\rm Z}(E_1) = {\rm Z}(E_2)$, then the self-intersection indices $(E_1^n)= (E_2^n)$ on $X$ (see \cite[\S 2.4]{Ku3}). 

The cycle map $\rm Z$ restricted to the semigroup of effective Cartier divisors $\Div^+(X)$ is an injective map to the semigroup of effective Weil divisors $\WDiv^+(X)$ not contained in the singular locus. We {\em will say } that  an effective Weil divisor $C$ on $X$, not contained in the singular locus, is a {\em $\dq$-Cartier} divisor on $X$ if $lC \in { \rm Im} \, ({\rm Z} \mid_{\Div^+(X)})$ for some integer $l >0$.

\begin{defin}
Let $C$ be a $\dq$-Cartier divisor on $X$. We define the self-intersection index $(C^n)$ on $X$ as
\begin{equation} \label{intin}
(C^n) = (G^n)/ l^n \mbox{,}
\end{equation}
where $G= lC$ is a Cartier divisor for some integer $l >0 $.
\end{defin}
We note that if $l >0$ is  minimal  such that $lC$ is a Cartier divisor, then for any other $l' > 0 $ with the property that $l'C$ is a Cartier divisor
we have that  $l \mid l'$. Therefore, using the property  $(E_1^n)= m^n(E_2^n)$ for any $E_1= m E_2$, $ E_2 \in \Div(X)$, $m \in \dz$
we obtain that formula~\eqref{intin} does not depend on the choice of appropriate $l$.

\smallskip

\begin{defin}
\label{geomdata}
We call $(X,C,P,\cf ,\pi , \phi )$ a geometric data of rank $r$ if it consists of the following data (where we fix the ring $k[[u,t]]$ for all data):
\begin{enumerate}
\item\label{dat1}
$X$ is a reduced irreducible projective algebraic surface defined over a field $k$;
\item\label{dat2}
$C$ is a reduced irreducible ample $\dq$-Cartier  divisor on $X$;
\item\label{dat3}
$P\in C$ is a closed $k$-point, which is
regular on $C$ and on $X$;
\item\label{dat4}
$$
\pi : \widehat{\co}_{P}\longrightarrow k[[u,t]]
$$
is a local $k$-algebra homomorphism satisfying the following
property. If $f$ is a local equation of the curve $C$ at $P$, then
$\pi (f)k[[u,t]] = t^rk[[u,t]]$ and the induced map $\pi :
\hat{\co}_{C,P}=\hat{\co}_P/(f) \rightarrow k[[u]] = k[[u,t]]/(t)$ is an
isomorphism. (The definition of $\pi$ does not depend on the choice
of appropriate $f$. Besides, from this definition it follows that
$\pi$ is an embedding, $k[[u,t]]$ is a free $\widehat{\co}_P$-module
of rank $r$ with respect to $\pi$. Moreover for any element $g$ from
the maximal ideal $\cm_P$ of $\co_P$ such that elements $g$ and $f$
generate $\cm_P$ we obtain that $\nu(\pi(f))=(0,r)$,
$\nu(\pi(g))=(1,0)$.)

\item\label{dat5}
$\cf$ is a torsion free quasi-coherent sheaf on $X$.
\item\label{dat6}
$\phi :{\cf}_P \hookrightarrow k[[u,t]]$ is an ${\co}_P$-module embedding
subject to the following condition for any $n \ge 0$ (we note that by item~\ref{dat4} of this definition, $k[[u,t]]$ is an ${\co}_P$-module with respect to $\pi$).
By item~\ref{dat2} there is the minimal natural number $d$ such that $C'=dC$ is a very ample divisor on $X$. Let $\gamma_n : H^0(X, \cf (nC'))\hookrightarrow {\cf}(nC')_P$ be an embedding (which is an embedding, since $\cf (nC')$ is a torsion free quasi-coherent sheaf on $X$).
Let $\epsilon_n : {\cf}(nC')_P \to \cf_P$ be the natural ${\co}_P$-module isomorphism  given by multiplication to an element $f^{nd} \in {\co}_{P}$, where $f \in {\co}_{P}$ is chosen as in item~\ref{dat4}. Let $\tau_n : k[[u,t]] \rightarrow k[[u,t]]/(u,t)^{ndr+1}$ be the natural
ring epimorphism. We demand that the map
$$    \tau_n \circ \phi \circ  \epsilon_n \circ \gamma_n \, : \, H^0(X, \cf (nC'))   \lto  k[[u,t]]/(u,t)^{ndr+1}$$
is an isomorphism. (These conditions on the map $\phi$ do not depend on the choice of the appropriate element $f$.)
\end{enumerate}
\end{defin}

\begin{nt}
\label{ophi}
The rank of the sheaf is greater or equal to the rank of the data, cf. \cite[Rem.3.3]{Ku3}. 
If the sheaf $\cf$ is coherent of rank one, then $\pi$ is an isomorphism and $\phi$ induces the isomorphism $\hat{\phi}:\hat{\cf}_P\simeq k[[u,t]]$, see \cite[Rem. 3.7]{Zhe2}. Note that any two trivialisations $\hat{\phi}_1, \hat{\phi}_2: \hat{\cf}_P\simeq k[[u,t]]$ differ by multiplication on an element $a\in k[[u,t]]^*$. In some cases 
the conditions on the map $\phi$ in last item of the definition can be rewritten in purely algebro-geometrical terms, see proposition \ref{zero_cohomology} below.
\end{nt}

\subsubsection{Alternative definition of geometric data}

In this section we would like to give an alternative definition of the geometric data. This definition seems to be more "geometric".

Let's introduce the following notation: $T=\Spec k[[u,t]]\supset T_1=\Spec k[[u]]$
(defined by $t=0$), $O=\Spec (k)\in T_1$, $R=k[[u,t]]$, $\cm =(u,t)\subset R$. 
\begin{defin}
\label{geomdataalt}
A geometric data is a triple $(X,j,\cf )$, where $X$ is an integral projective surface, 
$$j:T\rightarrow X$$
is a dominant $k$-morphism and $\cf\subset j_*\co_T$ is a quasicoherent subsheaf subject to the following conditions:
\begin{enumerate}
\item\label{GD1}
$j_*(T_1)=C\subset X$ is a curve\footnote{Notation: for a morphism of noetherian schemes $f:X\rightarrow Y$ and a closed subscheme $Z\subset X$, $f_*Z\subset Y$ is the closed subscheme defined by the ideal $\ker (\co_Y \stackrel{f^*}{\rightarrow}f_*\co_X\rightarrow f_*\co_Z)$}  (automatically integral),  
and $P=j(O)$ is a point neither in the singular locus of $C$ nor of $X$.
\item\label{GD2}
$T_1\times_X\{P\}=\{O\}$, $T\times_XC=rT_1$ (the fiber product is a subscheme of $T$ and $rT_1$ is an effective Cartier divisor on $T$), $r$ is called the rank of $(X,j,\cf )$.
\item\label{GD3}
There exists an effective, very ample Cartier divisor $C'\subset X$ with cycle $Z(C')=dC$ and for all $n>0$ the induced map (from the embedding $\cf \subset j_*\co_T$)
$$
H^0(X,\cf (nC'))\rightarrow H^0(X,j_*\co_T(nC'))=H^0(T,\co_T(ndrT_1))=Rt^{-ndr}\rightarrow Rt^{-ndr}/\cm^{ndr+1}t^{-ndr}
$$
is an isomorphism.
\end{enumerate}
\end{defin}

We left to the reader the proof of equivalence of these two definitions.

\begin{nt} 
\label{Kur}
With the data above we have the following properties:

1) $C$ is a $\dq$-Cartier divisor and $C^2=(C'\cdot C)/d=(C')^2/d^2$. 

2) $H^0(X,\cf )\simeq k$ (by \eqref{GD3}) hence we have a canonical embedding $\co_X\subset \cf$.

3) $\cf$ is a torsion free sheaf on $X$, and if $\cf$ is coherent then 
$$
\rk (\cf )(C^2)=r^2.
$$
Indeed, for $\cf$ as above we have 
$$
\chi (\cf (nC'))=\frac{(ndr+1)(ndr+2)}{2}.
$$
If $\cf$ is coherent of rank $m$ then 
$\cf\sim \co_X^m$ ($\sim$ means that the highest terms of the Hilbert polynomials of sheaves coincide). For any coherent sheaf $\cg$ on $X$ the function $\chi (\cg (nC'))$ is a polynomial of degree $\dim (\cg )=l$ with positive leading coefficient ($\in \dz /l!$), so $\chi (\cf (nC'))\sim m\chi (\co_X(nC'))$ and $n^2d^2r^2/2=m(C')^2/2$.  
\end{nt}

\begin{prop}
\label{Kurke}
If the embedding $\co_{X,P}\rightarrow \cf_P$ is an isomorphism, then $r=1$. Furthemore, $\co_X=\cf$ if and only if $X=\dpp^2$ and $C$ is a straight line in $\dpp^2$. 
\end{prop}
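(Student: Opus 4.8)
The plan is to treat the rank statement and the equality statement separately, keeping in the latter the standing hypothesis $\co_{X,P}\cong\cf_P$ of the first sentence (so that ``furthermore'' is read under that assumption). For $r=1$ I would argue on ranks alone. Since $\cf$ is torsion free on the integral surface $X$, its rank equals $\dim_{k(X)}\cf_\eta$ at the generic point $\eta$ and may be read off at any point where $\cf$ is locally free. If $\co_{X,P}\to\cf_P$ is an isomorphism, then $\cf_P\cong\co_{X,P}$ is free of rank one, so $\rk(\cf)=1$; as Remark \ref{ophi} says the rank of the sheaf is at least the rank $r$ of the data, we get $1=\rk(\cf)\ge r\ge 1$, hence $r=1$.

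Granting $r=1$, the implication ``$X=\dpp^2$ and $C$ a line $\Rightarrow\cf=\co_X$'' is quick. A line is very ample, so the minimal $d$ with $dC$ very ample is $1$ and $C'=C$; by Remark \ref{Kur}(3), $\rk(\cf)=r^2=1$. The canonical inclusion $\co_X\subset\cf$ of Remark \ref{Kur}(2) then has torsion cokernel $Q$, and since $\chi(\cf(nC))=\binom{n+2}{2}=\chi(\co_{\dpp^2}(n))$ for all $n$ (the first equality being Remark \ref{Kur}(3) with $r=d=1$), the Euler characteristic $\chi(Q(nC))$ vanishes identically; hence $Q=0$ and $\cf=\co_X$.

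For the converse, $\cf=\co_X$, I would first extract the numerical invariants of $(X,C)$. By Remark \ref{Kur}(3) with $r=1$ one has $\chi(\co_X(nC'))=\tfrac12(nd+1)(nd+2)$; comparing with the surface Riemann--Roch polynomial $\chi(\co_X)+\tfrac12\bigl((nC')^2-(nC')\cdot K_X\bigr)$ gives $\chi(\co_X)=1$, $(C')^2=d^2$ (so $C^2=1$ by Remark \ref{Kur}(1)) and $K_X\cdot C=-3$. The adjunction formula then yields $p_a(C)=1+\tfrac12(C^2+K_X\cdot C)=0$, so $C$ is a smooth rational curve of self-intersection one.

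The main step, and the real obstacle, is to deduce $X\cong\dpp^2$ from this. I would do it through the section ring: as $C'$ is very ample, $X\cong\Proj S$ with $S=\bigoplus_{n\ge0}H^0(X,\co_X(nC'))$, and condition \ref{GD3} trivialises each piece as $S_n\cong Rt^{-nd}/\cm^{nd+1}t^{-nd}$ by means of the ring homomorphism $j^*\colon\Gamma(X\setminus C,\co_X)\to R[t^{-1}]$, $s\mapsto t^{-nd}g_s$. The goal is to show that the multiplication transported from $S$ presents $S$ as the $d$-th Veronese subring of a polynomial ring $k[x_0,x_1,x_2]$, whence $\Proj S=\dpp^2$, $\co_X(C')=\co(d)$, and the irreducible reduction $C$ of $C'$ is the class of $\co(1)$, a line. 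The subtlety is that \ref{GD3} pins down only the truncations $g_s\bmod\cm^{nd+1}$ and not the sections themselves, so the induced product is not the naive one; making this precise amounts to identifying exactly which elements $g_s$ occur, i.e. to describing the Schur pair attached to the data. Alternatively, when $C$ is Cartier one may finish classically: $h^0(\co_X(C))=3$ and $|C|$ is base point free, so $\phi_{|C|}\colon X\to\dpp^2$ is a birational morphism of degree $C^2=1$ onto $\dpp^2$, hence an isomorphism carrying $C$ to a line. In either approach the crux is the characterisation of $\dpp^2$ by an ample irreducible curve with $C^2=1$, exactly the completion-of-plane phenomenon.
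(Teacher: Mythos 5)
Your handling of the first assertion is correct and takes a genuinely different route from the paper: you read $r\le\rk(\cf)=1$ off Remark \ref{ophi}, whereas the paper argues directly from condition \eqref{GD3} that $\cf_P=\co_{X,P}$ forces $R=k[[u,t^r]]+\cm^{ndr+1}$, which is possible only for $r=1$. Your version is shorter but leans on the unproved Remark \ref{ophi}; the paper's is self-contained. The implication ``$X=\dpp^2$, $C$ a line $\Rightarrow\cf=\co_X$'' via the identically vanishing Hilbert polynomial of the torsion cokernel of $\co_X\subset\cf$ is also fine (the paper leaves this direction implicit), granted the coherence of $\cf$ you implicitly use when invoking Remark \ref{Kur}(3).

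The genuine gap is in the main direction, $\co_X=\cf\Rightarrow X\cong\dpp^2$, which you yourself call ``the real obstacle'' and then do not close. Neither of your proposed exits works as stated: the Riemann--Roch/adjunction computation of $K_X\cdot C$ and $p_a(C)$ is unavailable on a surface that is only assumed reduced and irreducible (no Gorenstein or even Cohen--Macaulay hypothesis, and $C$ is merely $\dq$-Cartier, so $K_X\cdot C$ is not defined), while the classical finish needs $C$ Cartier, $h^0(\co_X(C))=3$ and $|C|$ base-point free, none of which is established here. The paper closes exactly the multiplicativity subtlety you flag, and does so without any appeal to a completion-of-the-plane theorem: condition \eqref{GD3} says $Rt^{-nd}=A_{nd}\oplus\cm^{nd+1}t^{-nd}$ for $A=H^0(X\setminus C,\co_X)\subset R[t^{-1}]$, so $A$ is a linear complement of the part $F^1$ of $R[t^{-1}]$ of total degree $\ge 1$ in $u,t$. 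Hence every nonzero $a\in A$ has a well-defined leading form in $\gr_F R[t^{-1}]\cong k[u][t,t^{-1}]$; these leading forms are multiplicative because the associated graded ring is a domain, and by \eqref{GD3} they fill out all homogeneous elements of $k[ut^{-1},t^{-1}]$. Choosing $x,y\in A$ with leading forms $ut^{-1}$ and $t^{-1}$, a downward induction on the degree filtration (terminating because $A\cap F^1=0$) gives $A=k[x,y]$ with $A_n$ the polynomials of degree $\le n$, whence $X=\Proj(\bigoplus_n A_ns^n)$ is $\dpp^2$ in its $d$-th Veronese embedding and $C^2=1$ makes $C$ a line. This is the content of the paper's assertion $v_{ij}v_{hm}=v_{i+h,j+m}$ (properly read as a statement about leading forms); without some version of it your argument does not reach the conclusion.
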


\begin{proof}
If $C$ is defined by $f=0$ in a small neighbourhood of $P$ (by \eqref{GD1} $\co_{X,P}$ is regular, and also $\co_{C,P}=\co_{X,P}/f\co_{X,P}$), then $fR=t^rR$ (by \eqref{GD2}) and $\cf (nC')_P=(\cf_P)_{f^{nd}}$. By \eqref{GD3} we have
$$
Rt^{-ndr}=H^0(X,\cf (nC'))\oplus \cm^{ndr+1}t^{-ndr},
$$
so $Rt^{-ndr}=\cf_Pt^{-nd} + \cm^{ndr+1}t^{-ndr}$ and if $\cf_P=\co_{X,P}$ we get $R=k[[u,t^r]]+\cm^{ndr+1}$ (for the proof we may assume that $u,t^r$ are generators of $\hat{\cm}_{X,P}=\cm_{X,P}\hat{\co}_{X,P}$). This is only possible for $r=1$. If $\co_X=\cf$ we get a canonical basis for each $H^0(X,\co_X(nC'))$ of the form $v_{ij}$, $0\le i\le i+j\le nd$, and $v_{ij}v_{hm}=v_{i+h,j+m}$ in $H^0(X\backslash C,\co_X)=:A$ ($v_{ij}$ corresponds to $u^it^j$ under the isomorphism in \eqref{GD3}). Thus $A=k[x,y]$ with $x=v_{10}$, $y=v_{01}$ (then $v_{ij}=x^iy^j$). 

Since 
$$X=\Proj \oplus_{n\ge 0}H^0(X,\co_X(nC'))=\Proj (\oplus_{n\ge 0}A_ns^n),$$ 
where $A_n=\sum_{i+j\le nd}kx^iy^j$, we get (by substituting $x=x'/z$, $y=y'/z$,
$s=z^d$) 
$$
\oplus_{n\ge 0}A_ns^n=k[(x')^i(y')^jz^k| i+j+k=m],
$$
i.e. $X=\dpp^2$ with the $d$-th Veronese embedding. Since $C^2=1$, we get $C$ is a straight line. 
\end{proof}

\subsection{Associated Schur pairs}
\label{pairs}

Given a geometric data $(X,C,P,\cf ,\pi , \phi )$ of rank $r$ we define a pair of subspaces
$$W,A\subset k[[u]]((t)),$$
where $A$ is a filtered subalgebra of $k[[u]]((t))$ and $W$ a filtered module over it, 
as follows (cf. \cite[Def.3.15]{Zhe2}): 

Let $f^d$ be a local generator of the ideal $\co_X(-C')_P$, where $C'=dC$ is a very ample Cartier divisor (cf. definition  \ref{geomdata}, item \ref{dat6}). Then $\nu (\pi (f^d))=(0,r^d)$ in the ring $k[[u,t]]$ and therefore  $\pi (f^d)^{-1}\in k[[u]]((t))$. So, we have natural embeddings for any $n >0$
$$
H^0(X, \cf (nC'))\hookrightarrow {\cf (nC')}_P\simeq f^{-nd} ({\cf }_P) \hookrightarrow k[[u]]((t)) \mbox{,}
$$
where the last embedding is the embedding $f^{-nd}{\cf }_P \stackrel{\phi }{\hookrightarrow } f^{-nd} k[[u,t]] {\hookrightarrow} k[[u]]((t))$ (cf. definition \ref{geomdata}, item~\ref{dat6}). Hence we have the embedding
$$
\chi_1 \; : \; H^0(X\backslash C, \cf )\simeq \limind_{n >0} H^0(X, \cf (nC')) \hookrightarrow k[[u]]((t)) \mbox{.}
$$
We define $W \eqdef \chi_1(H^0(X\backslash C, \cf ))$. Analogously the embedding $H^0(X\backslash C, \co )\hookrightarrow k[[u]]((t))$ is defined (and we'll denote it also by $\chi_1$). We define $A \eqdef \chi_1(H^0(X\backslash C, \co ))$.

As it follows from this construction, 
\begin{equation}
\label{qwerty}
A\subset k[[u']]((t')) \subset k[[u]]((t)), 
\end{equation}
where $t'=\pi (f)$, $u'=\pi (g)$ (see also definition \ref{geomdata}, item \ref{dat4}). Thus, on $A$ there is a filtration $A_n$ induced by the filtration ${t'}^{-n}k[[u']][[t']]$ on the space $k[[u']]((t'))$:
\begin{equation}
\label{qwerty1}
A_n = {A \cap {t'}^{-n}k[[u']][[t']]}= {A \cap {t}^{-nr}k[[u]][[t]]}
\end{equation}
We have $X\simeq \Proj (\tilde{A})$, where $\tilde{A}=\bigoplus\limits_{n=0}^{\infty}A_n s^n$ (see also \cite[lemma 3.3, lemma3.6, th.3.3]{Zhe2}). The similar filtration is defined on the space $W\subset k[[u]]((t))$:
\begin{equation}
\label{peresechenie}
W_n = {W \cap {t}^{-nr}k[[u]][[t]]}
\end{equation}
 And the sheaf $\cf \simeq \Proj (\tilde{W})$\footnote{Here and later in the article we use the non-standard notation $\Proj $ for the quasi-coherent sheaf associated with a graded module. If $M$ is a  filtered module, then we use the notation $\tilde{M}=\bigoplus\limits_{i=0}^{\infty} M_is^i$ for the analog of the Rees module, as well as for  filtered rings.}, where $\tilde{W}=\bigoplus\limits_{n=0}^{\infty}W_n s^n$. Note that we have $W_{nd}\simeq H^0(X, \cf (nC'))$ by definition \ref{geomdata}, item 6 and by construction of the map $\chi_1$. 

\subsubsection{Associated Schur pairs for alternative geometric data}

The same pair of subspaces $(W,A)$ can be defined also in terms of the alternative definition. Namely, to each geometric datum $(X,j,\cf )$ we associate a pair $(A,W)$ with $A\subset R[t^{-1}]=k[[u]]((t))$, $W\subset R[t^{-1}]$, where 
$A= H^0(X\backslash C, \co_X )\simeq \limind_{n >0} H^0(X, \co_X (nC'))$ embedded via 
$$
j^*:H^0(X, \co_X(nC'))\rightarrow H^0(X, j_*\co_T(nC'))=H^0(X,j_*\co_T(ndrT_1))=R\cdot t^{-ndr}
$$
and analogously for $\cf \subset j_*\co_T$. $A$ is a filtered subring with the filtration $A_n=A\cap R\cdot t^{-nr}$, and $W$ is a filtered $A$-module with the filtration $W_n=A\cap R\cdot t^{-nr}$. 

This pair $(A,W)$ determines the geometric data $(X,j,\cf )$, where $X$ and $\cf$ are defined as above, and the morphisms $j:T\rightarrow X$, $\cf \subset  j_*\co_T$ come from the embeddings $A\subset k[[u]]((t))$, $W\subset k[[u]]((t))$. 

\subsection{Category of geometric data}

In this section we recall definition of a category $\cq$ of geometric data from \cite{Zhe2} and give its alternative definition. Formally we need this section  only to recall the content of theorem \ref{dannye2}. We write it for the sake of completeness. 

\begin{defin}
\label{geomcategory}
We define a category $\cq$ of geometric data as follows:
\begin{enumerate}
\item\label{cat1}
The set of objects is defined by
$$
Ob (\cq )=\bigcup_{r\in \sdn} \cq_r,
$$
where $\cq_r$ denotes the set of geometric data of rank $r$. 
\item\label{cat2}
A morphism
$$
(\beta , \psi ) \, : \, [(X_1,C_1,P_1,\cf_1 ,\pi_1 , \phi_1 )] \longrightarrow [(X_2,C_2,P_2,\cf_2 ,\pi_2 , \phi_2 )]
$$
of two objects
consists of a morphism $\beta :X_1\rightarrow X_2$ of surfaces and a homomorphism $\psi :\cf_2\rightarrow \beta_*\cf_1$ of sheaves on $X_2$ such that:
\begin{enumerate}
\item\label{cat1.1}
$\beta |_{C_1} :C_1\rightarrow C_2$ is a morphism of curves and $\beta^{-1}(X_2\backslash C_2)=X_1\backslash C_1$;
\item\label{cat1.2}
$$
\beta (P_1)=P_2.
$$
\item\label{cat1.3}
There exists a continuous $k$-algebra isomorphism $h:k[[u,t]] \rightarrow k[[u,t]]$ (in a natural linear topology, where the base of neighbourhoods of zero is generated by the powers of the maximal ideal) such that
$$
h(u)=u \mbox{\quad mod \quad} (u^2)+(t), \mbox{\quad} h(t)=t \mbox{\quad mod \quad} (ut)+(t^2),
$$
and the following commutative diagram holds:
$$
\begin{CD}
H^0(X_2\backslash C_2, \co_2) @>\beta^{\sharp}>> H^0(X_1\backslash C_1, \co_1) \\
@VV\chi_2V @VV\chi_1V \\
k[[u]]((t)) @>\hat{h}>> k[[u]]((t)), 
\end{CD}
$$
where $\hat{h}$ denotes the natural extension of the map $h$ to a $k$-algebra $k[[u]]((t))$ automorphism.
\item\label{cat1.4}
There is a $k[[u,t]]$-module isomorphism $\xi :k[[u,t]] \simeq h_*(k[[u,t]])$ (which is given just by  multiplication of a single invertible element $\xi\in k[[u,t]]^*$) such that the following commutative diagram holds:
$$
\begin{CD}
H^0(X_2\backslash C_2, \cf_2) @>\psi>>H^0(X_2\backslash C_2, {\beta}_*\cf_1) = H^0(X_1\backslash C_1, \cf_1)\\
@VV\chi_2V @VV\chi_1V \\
k[[u]]((t)) @>\hat{\xi} >> h_*(k[[u]]((t)))=k[[u]]((t)).
\end{CD}
$$
\end{enumerate}
\end{enumerate}
\end{defin}

\subsubsection{Alternative definition of the category}

We can give an alternative definition of the category as follows. The set of objects is defined as before, i.e. an object from $\cq_r$ denotes the geometric datum $(X,j,\cf )$ of rank $r$.

We define a morphism of two objects $(X_1,j_1,\cf_1 )\rightarrow (X_2,j_2,\cf_2)$ as a pair $(\beta ,\psi )$ with $\beta :X_1\rightarrow X_2$ a dominant morphism of surfaces,  $\psi :\cf_2\rightarrow \beta_*\cf_1$ a morphism of quasicoherent sheaves subject to the following conditions:
\begin{enumerate}
\item 
$(\beta^{-1}C_2)_{red}=C_1$
\item
there exists $h\in Aut_k(T)$ with $h_*(T_1)=T_1$,
$$
h^*(u)=u \mbox{\quad mod \quad} (u^2)+(t), \mbox{\quad} h^*(t)=t \mbox{\quad mod \quad} (ut)+(t^2),
$$
such that the diagram 
$$
\begin{CD}
T @>j_1>>X_1\\
@VV h V @VV\beta V \\
T @>j_2 >> X_2
\end{CD}
$$
is commutative;
\item 
there exists $\xi\in Aut_{\co_T}(\co_T)$ (i.e. an element from $R^*$) such that the diagram 
$$
\begin{CD}
(\cf_2)_{P_2} @>\psi >>(\cf_1)_{P_1} \\
@VV \cap V @VV\cap V \\
R @>\xi >> R
\end{CD}
$$
is commutative.
\end{enumerate} 

The composition with a second morphism $(\beta' ,  \psi' )$ is given by 
$(\beta', \psi')\circ (\beta ,\psi )=(\beta' \beta ,(\beta')_*(\psi )\psi')$.

\begin{nt}
\label{Kur1}
It is interesting to note that, in general, a morphism of pairs $\beta :(X_1,C_1)\rightarrow (X_2,C_2)$ induces an embedding $H^0(X_2\backslash C_2,\co_{X_2})\hookrightarrow H^0(X_1\backslash C_1,\co_{X_1})$ if and only if $(\beta^{-1}C_2)_{red}=C_1$. 

The "if" part is obvious, let's show the "only if" part. Without loss of generality let $d>0$ be an integer such that $C_1'=dC_1$ and $C_2'=dC_2$ are effective very ample Cartier divisors. Then $\beta^*C_1'=mC_2'+E$, where $E=0$ or $E$ is an effective Cartier divisor. 

If $A=H^0(X_1\backslash C_1,\co_{X_1})$, $q=H^0(X_1\backslash C_1,\co_{X_1}(-E))$ (so, $q$ is an invertible ideal), then 
$\Spec (A)=X_1\backslash C_1$, $\Spec (\cap_n q^{-n})=X_1\backslash \beta^{-1}(C_2)$. If $B=H^0(X_2\backslash C_2,\co_{X_2})$, then we have $A\subset \cap_nq^{-n}$, $\cap_nq^{-n}$ is finite over $B$. If $B\subset A$, then from the exact triple 
$$
0\rightarrow A/B \rightarrow (\cap_nq^{-n})/B \rightarrow (\cap_nq^{-n})/A \rightarrow 0
$$ 
it follows that $(\cap_nq^{-n})$ must be a finite $A$-module, a contradiction if $E\neq 0$. 
\end{nt}

\begin{nt}
\label{nt}
The condition in item 2 on $h^*(u)$, $h^*(t)$ is important to establish a categorical equivalence with the category of Schur pairs from \cite{Zhe2}. The reason is that 
automorphisms of the form $h^*(u)=c_1u$, $h^*(t)=c_2t$ applied to a Schur pair will lead  (after application of the quasi-inverse functor from  \cite[Th.3.3]{Zhe2}), to a datum with another sheaf $\cf$ (i.e. to a datum not isomorphic to the original one). This effect was known already in the classical KP theory as a scaling transform (cf. \cite[\S 4,\S 7]{SW}). 
\end{nt}

\subsection{The restriction map $\zeta$}
\label{mapzeta}

To construct the map $\zeta$ mentioned in Introduction we need to extend the constructions from  previous section to a wider set of sheaves. Let $X,C,C',P$ and $\co_{X,P}\subset R$ (for the embedding $\pi$ or for the morphism $j: T\rightarrow X$) be as before. Let also $A\subset k[[u]]((t))=R[t^{-1}]$ be as before. 
We start with the following remark. 

\begin{nt}
\label{key} 
Let's note that we can construct the analogous spaces $W_n,\tilde{W}$ for any torsion free sheaf $\cf$ (not only for sheaves from data) endowed with a $\co_P$-module embedding $\cf_P\hookrightarrow k[[u,t]]$. The most important example of such sheaf with an embedding is a coherent torsion free rank one Cohen-Macaulay sheaf $\cf$ on $X$ (or, more generally, $\cf$ is locally free of rank one at $P$), where we additionally assume that the rank of data $r=1$ (i.e. the embedding $\pi$ gives an isomorphism $\co_{X,P}\simeq R$). 
 In this case the stalk $\cf_P$ is a free $\co_P$-module. Let  $\phi': \cf_P\simeq \co_P$ be a trivialisation;  we can define the embedding $\phi$ by composing a trivialization $\phi'$ with the isomorphism $\pi$. 
Note that, if we choose another trivialisation of such sheaf $\cf$, then the new space $W$ will differ from the old one  by multiplication on an element $a\in k[[u,t]]^*$ and the space $A$ will not change. 
Note also that the property $W_{nd}\simeq H^0(X, \cf (nC'))$ might not be true in general. 
\end{nt}

Further we will use  the  following notation. If $W\subset R[t^{-1}]$ is an $A$-module, we get a filtration $W_n=t^{-nr}R\cap W$ (compatible with the filtration on $A$) and a fortiori graded $\tilde{A}$-modules 
$$
\tilde{A}(i) \mbox{\quad} (\tilde{A}(i)_k=\tilde{A}_{k+i}), \mbox{\quad} \tilde{W}(i) \mbox{\quad} (\tilde{W}(i)_k=\tilde{W}_{k+i})
$$
and quasicoherent sheaves on $X$:
$$
\cb_i=\Proj (\tilde{A}(i)), \mbox{\quad} \cf_i=\Proj (\tilde{W}(i))
$$
with $\cb_i\subset \cb_{i+1}$, $\cf_i\subset \cf_{i+1}$. Note that
\begin{enumerate}
\item
$\cb_{id}\simeq \co_X(iC')$, and if $W$ comes from a geometric datum, then $\cf_{id}\simeq \cf (iC')$. In general, $\cf_{nd}\simeq \cf_0(dC')$, because by \cite[prop.2.4.7]{EGAII} we have
$\cf_{nd}=\Proj (\tilde{W}(nd))\simeq \Proj (\tilde{W}^{(d)}(n))$ and $\Proj (\tilde{W}^{(d)}(n))\simeq \Proj (\tilde{W}^{(d)})(n)\simeq \cf_0 (nC')$ for any $n$.
\item
If $\cf$ is a quasicoherent sheaf with an embedding $\cf \subset j_*\co_T$ (equivalently $\cf_P\subset R$) inducing $W=H^0(X\backslash C,\cf )\subset R[t^{-1}]$, then $\cf (iC')\subseteq \cf_{id}$. 
\item
If $\cf$ is a torsion free quasicoherent sheaf and if $\cf_P$ is a free module of rank one, we can find an embedding $\cf_P\subset R$ (by a choice of a generator $\cf_P\simeq \co_{X,P}\subset R$). The resulting sheaves $\cf_i$ do not depend on the choice of the generator, up to isomorphisms compatible with the embeddings $\cf\subset \cf_i\subset \cf_{i+1}$.
\item
From \eqref{qwerty}, \eqref{qwerty1} it easily follows that the sheaves 
$$
\cb_i/\cb_{i-1}\simeq \Proj (\bigoplus_{n=0}^{\infty} A_{i+n}/A_{i+n-1}),\mbox{\quad} \cf_i/\cf_{i-1}\simeq \Proj (\bigoplus_{n=0}^{\infty} W_{i+n}/W_{i+n-1})
$$ 
are torsion free coherent sheaves on
$C\simeq \Proj (\cb_0/\cb_{-1})$\footnote{We mean here and below the pull-backs of the factor-sheaves on $C$. Note that that these pull-backs are canonically isomorphic to the sheaves $\Proj (\bigoplus_{n=0}^{\infty} A_{i+n}/A_{i+n-1})$, $\Proj (\bigoplus_{n=0}^{\infty} W_{i+n}/W_{i+n-1})$, where the graded modules are considered as $\cb_0/\cb_{-1}$-modules. Indeed, for any $f\in A_d$ and any graded $\tilde{A}$-module $M$ we have $(M\otimes_{\tilde{A}}(\cb_0/\cb_{-1}))_{(f)}\simeq M_{(f)}\otimes_{\tilde{A}_{(f)}}(\cb_0/\cb_{-1})_{(f)}$, where from the pull-backs of the sheaves $\cb_i/\cb_{i-1}$, $\cf_i/\cf_{i-1}$ are isomorphic to the sheaves $\Proj (M)$, $\Proj (N)$ on $C$, where $M=    (\bigoplus_{n=0}^{\infty} A_{i+n}/A_{i+n-1})\otimes_{\tilde{A}}(\cb_0/\cb_{-1})$, $N=(\bigoplus_{n=0}^{\infty} W_{i+n}/W_{i+n-1})\otimes_{\tilde{A}}(\cb_0/\cb_{-1})$. But the modules $M$, $N$ are isomorphic to the $\cb_0/\cb_{-1}$-modules $(\bigoplus_{n=0}^{\infty} A_{i+n}/A_{i+n-1})$,  $(\bigoplus_{n=0}^{\infty} W_{i+n}/W_{i+n-1})$. }.
\end{enumerate}

\begin{defin}
For any torsion free sheaf $\cf$ endowed with a $\co_{P}$-module embedding $\cf_P\hookrightarrow k[[u,t]]$  we define a map (a kind of a "restriction" map): 
\begin{equation}
\label{zeta}
\zeta :\cf \mapsto \cf_0/\cf_{-1}
\end{equation}
from the set of torsion free sheaves on $X$ to the set of torsion free sheaves on $C$. 
\end{defin}

\begin{nt}
\label{torsion_free_sheaves}
For sheaves $\cf$ satisfying the property $W_{nd}\simeq H^0(X, \cf (nC'))$ for all $n\gg 0$ we have $\cf \simeq \cf_0$ by \cite[Lemma 9]{Pa} and \cite[Ch.2, ex. 5.9]{Ha}. If $\cf$ is a torsion free sheaf of rank one locally free at $P$, then by remark \ref{key} for another choice of trivialisation at $P$ there are isomorphisms $\cf_k'\simeq \cf_k$ for any $k$. So, in this case definition of $\zeta$ don't depend on trivialisation. In fact, in this case it depends only on $\cf$ (see remark \ref{rem2.8} below). 

On the other hand, for any torsion free sheaf $\cf$ and any $m>0$ we also have the exact sequences
$$
0\rightarrow \cf\otimes_{\co_X}\co_X(-mC')\rightarrow \cf \rightarrow \cf \otimes_{\co_X}(\co_X/\co_X(-mC'))\rightarrow 0.
$$
Thus the pull-back of the sheaf $\cf_0$ on the scheme $(C,i^{-1}(\co_X/\co_X(-mC')))$ (where $i:C\hookrightarrow X$ denotes the embedding)  is isomorphic to the pull-back of the sheaf $\cf_{0}/\cf_{-md}$. 

{\it Further we will denote the pull-back of a sheaf $\cf$ on the scheme $(C,i^{-1}(\co_X/\co_X(-mC')))$ as} $\cf |_{mC'}$. 

Note that the scheme $(C,i^{-1}(\co_X/\co_X(-mC')))$ is an irreducible scheme since $C$ is irreducible. Hence the nilradical of the ring $\tilde{A}/\tilde{A}(-md)$ is prime. 
From \eqref{qwerty}, \eqref{qwerty1} it again follows that $Ass(\tilde{W}/\tilde{W}(-md))$ coincides with this nilradical. Therefore, the restriction $\cf_0 |_{mC'}$ is pure of dimension one (cf. \cite[p.3]{HL}), because any restriction of a non-zero section $a\in \cf_0 |_{mC'}(U)$ (where $U$ is any open subset of $C$) to a smaller open subset is not zero. Thus, for any torsion free sheaf of rank one locally free at $P$ the sheaf $\cf |_{mC'}\subset \cf_0|_{mC'}$ is pure of dimension one. 
\end{nt}

Notably, we have the following property for any torsion free sheaf $\cf$ such that its restriction $\cf |_{mC'}$ on the scheme $(C,i^{-1}(\co_X/\co_X(-mC')))$ is pure of dimension one:  

\begin{lemma}
\label{claim1}
Let $\cf$ be a torsion free sheaf on $X$ endowed with a $\co_{P}$-module embedding $\cf_P\hookrightarrow k[[u,t]]$ satisfying the following condition: if $w\in W_{nd}$, then $w\in f^{-nd}(\cf_P)$. Assume that its restriction $\cf |_{mC'}$ is pure of dimension one for all $m>0$. 

Then we have $H^0(X, \cf (nC'))\simeq W_{nd}$ for all $n\ge 0$.
\end{lemma}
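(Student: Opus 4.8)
The plan is to realize both $H^0(X,\cf(nC'))$ and $W_{nd}$ as subspaces of $W=H^0(X\setminus C,\cf)\subset R[t^{-1}]$ and prove they coincide. First I would record the easy inclusion. Since $C'$ is supported on $C$, restriction to $X\setminus C$ gives an injection $H^0(X,\cf(nC'))\hookrightarrow H^0(X\setminus C,\cf(nC'))=H^0(X\setminus C,\cf)=W$, injective because $\cf(nC')$ is torsion free and $X\setminus C$ is dense. For a global section $s$, its germ at $P$ lies in $\cf(nC')_P\simeq f^{-nd}\cf_P\subset f^{-nd}R=t^{-ndr}R$, where I use $\pi(f)R=t^rR$ from item~\ref{dat4} of Definition~\ref{geomdata}. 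Hence $s$ lands in $W\cap t^{-ndr}R=W_{nd}$, giving $H^0(X,\cf(nC'))\subseteq W_{nd}$ and the injectivity half of the statement.

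For the reverse inclusion I would take $w\in W_{nd}$. The hypothesis of the lemma gives $w\in f^{-nd}\cf_P=\cf(nC')_P$, i.e. the germ of $w$ at $P$ is already a section of $\cf(nC')$ there. By construction of $W$ (and since $C'$ is ample, so $X\setminus C$ is affine) we have $W=\limind_m H^0(X,\cf(mC'))$, so $w\in H^0(X,\cf(mC'))$ for some $m>n$ (the case $m=n$ being trivial). Writing $l=m-n$ and $\cg=\cf(mC')/\cf(nC')$, I would pass to the image $\bar w\in H^0(X,\cg)$. By left-exactness of global sections applied to $0\to\cf(nC')\to\cf(mC')\to\cg\to0$, proving $\bar w=0$ yields $w\in H^0(X,\cf(nC'))$, which is the desired conclusion; so everything reduces to showing $\bar w=0$.

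The heart of the argument, and the step I expect to be the main obstacle, is deducing $\bar w=0$ from purity together with the single-point information at $P$. Tensoring the short exact sequence $0\to\co_X(-lC')\to\co_X\to\co_X/\co_X(-lC')\to0$ of the line bundle $\co_X(-lC')$ with $\cf(mC')$ identifies $\cg$ with the pull-back $\cf(mC')|_{lC'}$, which is $\cf|_{lC'}$ twisted by the invertible sheaf $\co_X(mC')|_{lC'}$ on the scheme $(C,i^{-1}(\co_X/\co_X(-lC')))$. Since twisting by an invertible sheaf does not change associated points, the purity hypothesis on $\cf|_{lC'}$ makes $\cg$ pure of dimension one. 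As $C$ is irreducible, purity forces $\mathrm{Ass}(\cg)=\{\eta\}$ with $\eta$ the generic point of $C$, so the canonical map $\cg\to(i_\eta)_*\cg_\eta$ is injective and a section of $\cg$ vanishes iff its germ at $\eta$ does. Now $\bar w_P=0$ because $w\in\cf(nC')_P$; and since $\eta$ is a generization of $P$, the stalk $\cg_\eta$ is a localization of $\cg_P$, whence $\bar w_\eta=0$ and therefore $\bar w=0$. This gives $W_{nd}\subseteq H^0(X,\cf(nC'))$, and combined with the first paragraph completes the proof.

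The technical points I would verify with care are: that the displayed sequence is indeed exact (it is, because $\co_X(-lC')$ is locally free, so tensoring with $\cf(mC')$ introduces no $\mathrm{Tor}$); that the identification of $\cg$ with $\cf|_{lC'}$ twisted by a line bundle matches exactly the scheme $(C,i^{-1}(\co_X/\co_X(-lC')))$ used to define $\cf|_{lC'}$ in Remark~\ref{torsion_free_sheaves}; and that purity is genuinely inherited under the line-bundle twist. The key conceptual mechanism is the generization step $\bar w_P=0\Rightarrow\bar w_\eta=0$, which propagates the local condition at $P$ to the generic point of $C$, where purity finally converts it into the global vanishing of $\bar w$.
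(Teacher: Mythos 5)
Your proof is correct and follows essentially the same route as the paper: both reduce the inclusion $W_{nd}\subseteq H^0(X,\cf(nC'))$ to showing that the image of $w$ in the quotient $\cf(mC')/\cf(nC')\simeq\cf(mC')|_{(m-n)C'}$ vanishes, using that it vanishes at $P$ together with purity of dimension one. The only (cosmetic) difference is that you propagate the vanishing via the generic point of $C$ and associated points, whereas the paper phrases purity as injectivity of the restriction map $H^0(C,\cf(mC')|_{(m-n)C'})\to H^0(U(P)\cap C,\cf(mC')|_{(m-n)C'})$ for a neighbourhood $U(P)$ of $P$.
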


\begin{nt}
\label{rem2.7}
The condition on a $\co_{P}$-module embedding from lemma is satisfied for example for all 
rank one torsion free sheaves locally free at $P$ (see remark \ref{key}) and for coherent sheaves of rank $r$ from the geometric data (where $r$ coincides with the rank of the data), because $\co_P$ is a regular factorial ring. Other examples see in theorem \ref{CMmodules}. 
\end{nt}

\begin{proof}
By definition of the space $W$ we have 
$$
W_{nd}=\{w\in W | f^{nd}w\in k[[u]][[t]]\}=\{w\in W | \nu_t(f^{nd}w)\ge 0\}.
$$
We also have by definition $\chi_1(H^0(X, \cf (nC'))\subset W_{nd}$. Let $w\in W_{nd}$, $w\neq 0$. Let's show that $w\in \chi_1(H^0(X, \cf (nC'))$.  
We have 
$$
w\in \chi_1(H^0(X, \cf (mC'))
$$ 
for some $m$. Since $\cf $ is a torsion free sheaf and $C'$ is a Cartier divisor, we have embeddings 
$$
\chi_1(H^0(X, \cf (kC'))\subset \chi_1(H^0(X, \cf (nC'))
$$
for all $k\le n$. Suppose that $m>n$. 
 Assume the converse: $w\notin \chi_1(H^0(X, \cf (nC'))$. Let $b\in H^0(X, \cf (mC'))$ be the preimage of $w$: $w=\chi_1(b)$. 
 
There is a neighbourhood $U(P)$ of the point $P$, where the ample Cartier divisor $C'$ is defined by the element $f^d$. Since $w\in W_{nd}$, we have $w\in f^{-nd}(\cf_{P})$, thus $b|_{U(P)}\in \Gamma (U(P), \cf (nC'))$ and $b|_{U(P)}\neq 0$ (since $\cf $ is torsion free). Now we have the following commutative diagram:
$$
\begin{array}{cccc}
&b& \hookrightarrow & H^0(C, \cf (mC')|_{(m-n)C'})\\
&\downarrow & & \downarrow \\
0\rightarrow \Gamma (U(P), \cf  (nC')) \rightarrow & \Gamma (U(P),\cf (mC')) & \stackrel{\alpha}{\rightarrow} & H^0(U(P)\cap C, \cf (mC')|_{(m-n)C'})
\end{array},
$$
where the vertical arrows are embeddings. Indeed, the right vertical arrow is an embedding since $\cf (mC')|_{(m-n)C'}$  is pure of dimension one by assumption. 

But $\alpha (b)=0$, a contradiction. Thus, $b\in H^0(X, \cf (nC'))$.
\end{proof}

By \cite[Cor. 3.1]{Ku3} all sheaves $\cf$ of rank one appearing in the geometric data from definition \ref{geomdata} are Cohen-Macaulay along $C$. As it easily follows from definition \ref{geomdata} (item 6) all such sheaves fulfil the property $\co_X\subset \cf$, $P\notin\Sup (\cf /\co_X)$. 
\begin{lemma}
\label{claim2}
Let $\cf$ be a torsion free rank one sheaf on $X$. Assume that $\cf$ is Cohen-Macaulay along $C$.

Then for some trivialisation $\hat{\phi} :\hat{\cf}_P\simeq k[[u,t]]$ (see remark \ref{key})
$$
W_{nd}\simeq H^0(X,\cf (nC'))
$$
for all $n\ge 0$, or, equivalently, $\cf_0\simeq \cf$. 
\end{lemma}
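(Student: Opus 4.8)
The plan is to verify that $\cf$, after a suitable trivialisation at $P$, meets all the hypotheses of Lemma \ref{claim1}, and then to quote that lemma together with remark \ref{torsion_free_sheaves}.

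The one substantive step is to produce the trivialisation, and this is where the Cohen-Macaulay hypothesis enters. Since $P$ is a regular point of $X$, the local ring $\co_{X,P}$ is regular of dimension two; since $\cf$ is Cohen-Macaulay along $C$ it is in particular Cohen-Macaulay at $P$, so the finitely generated torsion free stalk $\cf_P$, which has dimension two, is a maximal Cohen-Macaulay module, i.e. of depth two. By the Auslander-Buchsbaum formula its projective dimension over $\co_{X,P}$ is zero, hence $\cf_P$ is free; being of rank one it is free of rank one. Thus $\cf$ is locally free of rank one at $P$, and (using $r=1$, which holds for rank one sheaves by remark \ref{ophi}) we are exactly in the situation of remark \ref{key}: a choice of generator of $\cf_P$ gives a trivialisation $\phi'\colon \cf_P \simeq \co_{X,P}$, and composing with the isomorphism $\pi$ produces an embedding $\cf_P \hookrightarrow k[[u,t]]$, hence $\cf_P \subset R$ and an associated space $W$.

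With this trivialisation fixed I would check the two hypotheses of Lemma \ref{claim1}. The condition that $w \in W_{nd}$ forces $w \in f^{-nd}(\cf_P)$ is recorded in remark \ref{rem2.7} precisely for rank one torsion free sheaves locally free at $P$ (the mechanism being that $\co_{X,P}$ is regular and factorial). The purity condition, namely that $\cf|_{mC'}$ is pure of dimension one for every $m>0$, is exactly the conclusion reached at the end of remark \ref{torsion_free_sheaves} for any rank one torsion free sheaf that is locally free at $P$, obtained there from the identification of $Ass(\tilde{W}/\tilde{W}(-md))$ with the (prime) nilradical and the inclusion $\cf|_{mC'} \subset \cf_0|_{mC'}$. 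Both hypotheses therefore hold.

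Applying Lemma \ref{claim1} then gives $H^0(X, \cf(nC')) \simeq W_{nd}$ for all $n \ge 0$ for the chosen trivialisation, which is the first assertion. Since this holds a fortiori for all $n \gg 0$, the first paragraph of remark \ref{torsion_free_sheaves} (invoking the results of Parshin and Hartshorne cited there) yields the equivalent statement $\cf \simeq \cf_0$. The only place where any real work is done is the freeness of $\cf_P$ at the outset; once that is in hand the argument is purely a matter of invoking Lemma \ref{claim1} with its hypotheses checked against the preceding remarks, so I expect no further obstacle.
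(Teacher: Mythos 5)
Your proof is correct and follows essentially the same route as the paper, which simply states that the claim ``follows immediately from remarks \ref{torsion_free_sheaves}, \ref{rem2.7} and lemma \ref{claim1}, since $\cf$ is locally free at $P$.'' The only difference is that you spell out the justification for local freeness at $P$ (depth two via the Cohen--Macaulay hypothesis plus Auslander--Buchsbaum over the regular local ring $\co_{X,P}$), a detail the paper leaves implicit; that argument is valid.
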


\begin{proof}
The proof follows immediately from remarks \ref{torsion_free_sheaves}, \ref{rem2.7} and lemma \ref{claim1}, since $\cf$ is locally free at $P$. 
\end{proof}

\begin{nt}
\label{rem2.8}
If $\cf$ is a torsion free sheaf of rank one locally free at $P$, then $\zeta (\cf )\simeq i^*(\cf )$, where $i$ is the same as in remark \ref{torsion_free_sheaves}. Indeed, $\cf\simeq \cf_0$ by lemma \ref{claim1}, remarks \ref{torsion_free_sheaves}, \ref{rem2.7}. By the arguments from footnote 3 $i^*(\cf_0)\simeq \Proj (\tilde{W}\otimes_{\tilde{A}}(\tilde{A}/\tilde{A}(-1)))$. It is easy to see that the $(\tilde{A}/\tilde{A}(-1))$-modules $(\tilde{W}\otimes_{\tilde{A}}(\tilde{A}/\tilde{A}(-1)))$ and $(\tilde{W}/\tilde{W}(-1)\otimes_{\tilde{A}}(\tilde{A}/\tilde{A}(-1)))$ are isomorphic. But again by the arguments from footnote 3 $i^*(\cf_0/\cf_{-1})\simeq \Proj (\tilde{W}/\tilde{W}(-1)\otimes_{\tilde{A}}(\tilde{A}/\tilde{A}(-1)))$. 
\end{nt}

\begin{corol}
\label{cor1}
For any $k\ge 0$ we have $H^0(X, \cf_k(nC'))\simeq W_{nd+k}$ for all $n\ge 0$. 
\end{corol}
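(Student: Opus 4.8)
The plan is to deduce the corollary from Lemma \ref{claim1} applied not to $\cf$ itself but to each of the intermediate sheaves $\cf_k=\Proj(\tilde W(k))$, $k\ge 0$. First I would record that every $\cf_k$ is again a torsion free sheaf of rank one which is locally free at $P$. Indeed $\cf_0\simeq\cf$ is locally free of rank one at $P$, and near $P$ — which is regular on $X$ and on $C$, so that $C$ is Cartier there with local equation $f$ — the graded shift by $k$ amounts to allowing a pole of order $k$ along $C$, i.e. $(\cf_k)_P=f^{-k}\cf_P$ as submodules of $R[t^{-1}]$. In particular $(\cf_k)_P$ is free of rank one over $\co_{X,P}$, and after fixing a generator it embeds into $R=k[[u,t]]$ via $\pi$.

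Next I would verify that $\cf_k$ meets the hypotheses of Lemma \ref{claim1}. The embedding condition (``$w\in W_{nd}\Rightarrow w\in f^{-nd}(\cf_k)_P$'') holds for every rank one torsion free sheaf locally free at $P$ by Remark \ref{rem2.7} (this is where factoriality of $\co_P$ enters), and the purity of $\cf_k|_{mC'}$ for all $m>0$ is exactly the conclusion of Remark \ref{torsion_free_sheaves}, which again applies to any rank one torsion free sheaf locally free at $P$. Thus Lemma \ref{claim1} gives $H^0(X,\cf_k(nC'))\simeq (W^{\cf_k})_{nd}$ for all $n\ge 0$, where $W^{\cf_k}\subset R[t^{-1}]$ is the Schur space associated with $\cf_k$ and a chosen trivialisation of $(\cf_k)_P$, filtered by $(W^{\cf_k})_{nd}=t^{-ndr}R\cap W^{\cf_k}$.

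It then remains to match the shifted filtration, which is the only genuinely computational point. Since on $X\setminus C$ all the $\cf_k$ have the same sections, $W^{\cf_k}$ equals $W$ up to the scalar coming from the trivialisation: choosing the generator $f^{-k}\cdot(\text{generator of }\cf_P)$ of $(\cf_k)_P$ multiplies the embedding used to define $W$ in subsection \ref{pairs} by $f^{k}$, and since $\nu_t(\pi(f^{k}))=kr$ one has $\pi(f^{k})=t^{kr}\cdot(\text{unit})$. Hence $W^{\cf_k}=t^{kr}W$ up to a unit, and therefore
$$
(W^{\cf_k})_{nd}=t^{-ndr}R\cap t^{kr}W=t^{kr}\bigl(t^{-(nd+k)r}R\cap W\bigr)=t^{kr}W_{nd+k}\simeq W_{nd+k}.
$$
Combining with the previous step yields $H^0(X,\cf_k(nC'))\simeq W_{nd+k}$ for all $k\ge 0$ and $n\ge 0$, as claimed.

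I expect the main obstacle to be the clean identification $(\cf_k)_P=f^{-k}\cf_P$ together with the bookkeeping of the scalar $t^{kr}$: one must check that the graded twist $\tilde W(k)$ really does produce, on stalks at $P$, exactly an order-$k$ pole along $C$ (equivalently $\cf_k\simeq\cf_0(kC)$ locally at $P$), and that the purity and local-freeness hypotheses of Lemma \ref{claim1} genuinely survive this twist. An alternative, essentially equivalent route is to first reduce, via the isomorphism $\cf_k(nC')\simeq\cf_{k+nd}$ (the first itemized property above and \cite[Prop.~2.4.7]{EGAII}, using $\cb_d\simeq\co_X(C')$), to the single statement $H^0(X,\cf_m)\simeq W_m$ for all $m\ge 0$, and then to run the commutative-diagram argument of Lemma \ref{claim1} at the twist level $m$ rather than $nd$.
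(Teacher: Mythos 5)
Your proposal is correct, and since the paper disposes of this corollary with the single sentence ``The proof is obvious,'' your reduction --- identify $\cf_k(nC')$ with $\cf_{nd+k}$, recognise the Schur space of $\cf_k$ as $\pi(f^k)W=t^{kr}\cdot(\mathrm{unit})\cdot W$, and feed $\cf_k$ back into Lemma \ref{claim1} --- is surely the intended argument; the final bookkeeping $(W^{\cf_k})_{nd}=t^{kr}\bigl(W\cap t^{-(nd+k)r}R\bigr)\simeq W_{nd+k}$ is exact. The one step you rightly single out, $(\cf_k)_P=f^{-k}\cf_P$ (equivalently: $\cf_k$ is again locally free of rank one at $P$, so that Remarks \ref{torsion_free_sheaves} and \ref{rem2.7} apply to it), does need an argument, and ``the graded shift allows a pole of order $k$ along $C$'' is not quite a proof: a priori the shift could produce the non-free stalk $(f,u^a)f^{-k}\cf_P$ for some $a>0$. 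The clean way to close this is via item 4 of the list preceding the definition of $\zeta$: the quotient $\cf_k/\cf_{k-1}$ is a torsion free coherent sheaf on $C$, of rank one under the running hypotheses, so its stalk at the regular point $P$ of $C$ is free of rank one over the discrete valuation ring $\co_{C,P}$; since $f$ annihilates it, $(\cf_{k-1})_P\subseteq(\cf_k)_P\subseteq f^{-1}(\cf_{k-1})_P$, and the quotient $(\cf_k)_P/(\cf_{k-1})_P$ is the \emph{whole} of $f^{-1}(\cf_{k-1})_P/(\cf_{k-1})_P$ because the graded pieces $W_{m}/W_{m-1}\subset k[[u]]\,t^{-mr}$ contain elements with unit leading coefficient for all $m\gg 0$ (in the situations where the corollary is used this is immediate from item 6 of Definition \ref{geomdata}, and in the setting of Remark \ref{key} it follows from global generation of $\cf(nC')$ at $P$ for $n\gg 0$). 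With that, the induction $(\cf_k)_P=f^{-1}(\cf_{k-1})_P=f^{-k}\cf_P$ closes and your proof goes through.
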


The proof is obvious.

\subsection{Commutative rings of operators}
\label{CRO}

In this paper we will work mainly with commutative $k$-algebras of PDOs $B\subset D=k[[x_1,x_2]][\partial_1,\partial_2]$ that satisfy the following condition:
\begin{multline}
\label{property}
\mbox{$B$ contains the operators $P,Q$ with constant principal symbols such that}\\
\mbox{ the intersection of the characteristic divisors of $P,Q$ is empty. }
\end{multline} 

Recall that the symbol $\sigma (P)$ of an operator $P\in D$ is called constant if $\sigma (P)\in k[\xi_1,\xi_2]$. The characteristic divisor is given by the divisor of zeros of $\sigma (P)$ in $\dpp_k^{n-1}$. It is unchanged by a $k$-linear change of coordinates $x_1,\ldots ,x_n$. Recall also that any operator $Q$ from the ring $B$ satisfying condition \eqref{property} has constant principal symbol (see e.g. \cite[Lemma 2.1]{Ku3}) and all such rings are finitely generated $k$-algebras of Krull dimension 2 (see e.g. \cite[Th.2.1]{Ku3}). 

In the work \cite{Zhe2} was shown that such algebras are a part of a wider set of commutative $k$-algebras $B'\subset \hat{D}$, and all algebras from this set can be classified in terms of geometric data from subsection \ref{data}. To explain what is going on we need to recall several definitions and statements from  \cite{Zhe2}. 

\begin{defin}
\label{defin3.0}
We define the order function on the ring $k[[x_1,x_2]]$ by the rule 
$$
\ord_M(a)=\sup \{n|a\in (x_1,x_2)^n\}.
$$
\end{defin}

\begin{defin}{(\cite[Sect.2.1.5]{Zhe2})}
Define
\begin{multline}
\hat{D}_1=\{a=\sum_{q\ge 0} a_{q}\partial_1^q\mbox{\quad }|  a_q\in k[[x_1, x_2]] \mbox{ and for any $N\in \dn$ there exists $n\in \dn$ such that }\\
\mbox{$\ord_{M}(a_m)>N$ for any $m\ge n$}\}. 
\end{multline}

Define
$$
\hat{D}=\hat{D}_1[\partial_2], \mbox{\quad} \hat{E}_+=\hat{D}_1((\partial_2^{-1})).
$$
\end{defin}

\begin{defin}{(\cite[Def.2.12]{Zhe2})}
\label{defin3}

We say  that an operator $P\in \hat{D}$ has order $\ord_{\Gamma} (P)=(k,l)$ if $P=\sum_{s=0}^lp_s\partial_2^s$, where $p_s\in \hat{D}_1$, $p_l\in k[[x_1,x_2]][\partial_1]=D_1$, and $\ord (p_l)=k$ (here $\ord$ is the usual order in the ring of differential operators $D_1$). In this situation we say that the operator $P$ is monic if the highest coefficient of $p_l$ is $1$. 

We say that an operator $Q=\sum q_{ij}\partial_1^{i}\partial_2^{j}\in \hat{E}_+$ {\it satisfies the condition $A_{1}(m)$} if $\ord_M(q_{ij})\ge i+j-m$ for all $(i,j)$. 

An operator $P\in \hat{D}$, $P=\sum p_{ij}\partial_1^{i}\partial_2^{j}$ with $\ord_{\Gamma} (P)=(k,l)$ {\it satisfies the condition $A_{1}$} if it satisfies $A_1(k+l)$. 
\end{defin}

\begin{defin}{(\cite[Def.2.18]{Zhe2})}
\label{elliptic}
The ring $B\subset \hat{D}$ of commuting operators is called quasi elliptic if it contains two monic operators $P,Q$ such that $\ord_{\Gamma} (P)= (0,k)$  and $\ord_{\Gamma} (Q)=(1,l)$ for some $k,l\in \dz$.  

The ring $B$ is called $1$-quasi elliptic if $P,Q$ satisfy the condition $A_{1}$. 
\end{defin}

\begin{defin}{(\cite[Def.3.4]{Zhe2})}
\label{rings}
The commutative $1$-quasi elliptic rings $B_1$, $B_2\subset \hat{D}$ are said to be equivalent if there is an invertible operator $S\in \hat{D}_1$ of the form $S=f+S^-$, where $S^-\in \hat{D}_1\partial_1$, $f\in k[[x_1,x_2]]^*$,  such that $B_1=SB_2S^{-1}$. 
\end{defin}

\begin{defin}{(\cite[Def.3.1]{Zhe2})}
\label{1space}
The subspace $W\subset  k[z_1^{-1}]((z_2))$ is called $1$-space, if there exists a basis $w_i$ in $W$ such that $w_i$ satisfy the condition $A_{1}$ for all $i$ (we identify here and below the ring $k[z_1^{-1}]((z_2))$ with the ring $k[\partial_1]((\partial_2^{-1}))$ via $z_1 \leftrightarrow \partial_1^{-1}$, $z_2\leftrightarrow \partial_2^{-1}$)
\end{defin}

Using the identification $z_1 \leftrightarrow \partial_1^{-1}$, $z_2\leftrightarrow \partial_2^{-1}$ we can extend the definition of the order function $\ord_{\Gamma}$ from definition \ref{defin3} on the field $V=k((z_1))((z_2))$. Using the anti-lexicographical order on the group $\dz\oplus\dz$ we define the lowest term $LT(a)$ of any series $a$ from $V$ to be the monomial of $a$ with the lowest order. 

\begin{defin}{(\cite{ZO})}
\label{defin2}
The support of a $k$-subspace $W$ from the space $V$ is the $k$-subspace $\Sup (W)$ in the space $V$ generated by $\LT (a)$ for all $a\in W$. 
\end{defin}

\begin{defin}{(\cite[Def.3.2]{Zhe2})}
\label{sch}
We say that a pair of subspaces $(A,W)$, where $A,W \subset  k[z_1^{-1}]((z_2))$ and $A$ is a $k$-algebra with unity such that $W\cdot A\subset W$, is a $1$-Schur pair if $A$ and $W$ are $1$-spaces and $\Sup (W)=k[z_1^{-1},z_2^{-1}]$. 

We say that $1$-Schur pair is a $1$-quasi elliptic Schur pair if $A$ is a $1$-quasi elliptic ring. 
\end{defin}

Consider the ring $\hat{E}_+=\hat{D}_1((\partial_2^{-1}))$. It has a natural action on the space $k[z_1^{-1}]((z_2))$ via the isomorphism $\hat{E}_+/(x_1,x_2)\hat{E}_+\simeq k[z_1^{-1}]((z_2))$ which endows this space with the structure of a right $\hat{E}_+$-module. 

\begin{defin}{(\cite[Def.3.3]{Zhe2})}
\label{admissible}
An operator $T\in \hat{E}_+$ is said to be admissible if it is an invertible operator of order zero such that $T\partial_1 T^{-1}$, $T\partial_2T^{-1}\in k[\partial_1]((\partial_2^{-1}))$. The set of all admissible operators is denoted by $\Adm$. 

An operator $T\in \hat{E}_+$ is said to be $1$-admissible if it is admissible and 
satisfies the condition $A_{1}$ (the definition of the condition $A_{1}$ for operators from $\hat{E}_+$ is literally the same as for operators from $\hat{D}$). The set of all $1$-admissible operators is denoted by $\Adm_{1}$.

We say that two $1$-Schur pairs $(A,W)$ and $(A',W')$ are equivalent if $A'=T^{-1}AT$ and $W'=WT$, where $T$ is an admissible operator. 
\end{defin}

\begin{nt}
\label{alpha=1}
In \cite{Zhe2} a more general growth condition $A_{\alpha}$, $\alpha \ge 1$, was introduced. In the present paper we use only the condition $A_1$.  
This is the only case when the classification theorems from \cite{Zhe2} (see also below) work. 

Let's recall here one more notion from \cite{Zhe2}.

Consider the set in $\hat{E}_+$
$$
\Pi =\{P\in \hat{E}_+| \mbox{\quad $\exists $  $m\in \dz_+$ s. that $P$ satisfies $A_{1}(m)$}\} .
$$
It is an associative subring with unity (see \cite[Corol.2.2]{Zhe2}).

We note that $\Pi \supset D$. Recall that by \cite[lemma 2.10, lemma 2.11]{Zhe2} it follows that any $1$-quasi elliptic ring $B$ belongs to $\Pi$.  
\end{nt}

\begin{nt}
\label{admnote}
By \cite[Lemma 2.11]{Zhe2} any two operators  with constant coefficients $L_1,L_2$ of the form
$$
L_1=\partial_1 +\sum_{q=1}^{\infty}v_q\partial_2^{-q}, \mbox{\quad } L_2=\partial_2+\sum_{q=1}^{\infty}u_q\partial_2^{-q} 
$$
and satisfying condition $A_1$ can be obtained as $L_1=S^{-1}\partial_1S$, $L_2=S^{-1}\partial_2S$, where $S=1+S^-\in k[[x_1,x_2]][\partial_1]((\partial_2^{-1}))$ is an invertible zeroth order $1$-admissible operator. 

On the other hand, as one can easily check, for the operator 
\begin{equation}
\label{special_form}
T_0 =c_0\exp (c_1x_2 \partial_1)\exp (c_2x_2+c_3x_1) \in \hat{D}_1,
\end{equation}
where $c_0,c_1,c_2,c_3\in k$, we have 
$$
T_0^{-1}\partial_1T_0=\partial_1+c_3, \mbox{\quad} T_0^{-1}\partial_2T_0=\partial_2+c_1\partial_1+c_1c_3+c_2.
$$
So, any $1$-admissible operator can be written in the form $T=ST_0$. 
\end{nt}

\begin{theo}{(\cite[Th.3.2]{Zhe2})}
\label{schurpair} 
There is a one to one correspondence between the classes of equivalent $1$-quasi elliptic Schur pairs $(A,W)$  with $\Sup(W)=\langle z_1^{-i}z_2^{-j}\mbox{\quad }| i,j\ge 0 \rangle$ and the classes of equivalent  $1$-quasi elliptic rings  of commuting operators $B\subset \hat{D}$.    
\end{theo}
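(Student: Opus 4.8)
The plan is to realize the correspondence through the admissible dressing operator, which translates between the purely algebraic Schur data and the differential operators. Write $V_+=k[z_1^{-1}]((z_2))$ and let $W_0=\langle z_1^{-i}z_2^{-j}\mid i,j\ge 0\rangle=k[z_1^{-1},z_2^{-1}]$ be the standard subspace; under $z_1\leftrightarrow\partial_1^{-1}$, $z_2\leftrightarrow\partial_2^{-1}$ this is the image of $k[\partial_1,\partial_2]$ in $V_+\simeq\hat E_+/(x_1,x_2)\hat E_+$, on which $\hat E_+$ acts on the right. Note that for a constant–coefficient element $a\in V_+$ this right action coincides with multiplication inside $V_+$, so the module condition $W\cdot A\subseteq W$ of a Schur pair and the right action of an admissible operator live on the same space; the whole argument rests on playing these two actions against each other.

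First I would treat the map from pairs to rings. Starting from a $1$-quasi elliptic Schur pair $(A,W)$ with $\Sup(W)=W_0$, the big-cell hypothesis should produce a $1$-admissible operator $S\in\Adm_1$ with $W=W_0\,S$: choosing a basis $\{w_{ij}\}$ of $W$ whose lowest terms $\LT(w_{ij})$ run exactly through the monomials $z_1^{-i}z_2^{-j}$ (possible precisely because $\Sup(W)=W_0$), one builds $S$ by the triangular recursion that clears successive lower–order terms, and the hypothesis that each $w_{ij}$ satisfies the condition $A_1$ is what forces $S$ to satisfy $A_1$. Setting $B:=SAS^{-1}$, the computation $W_0B=W_0SAS^{-1}=WAS^{-1}\subseteq WS^{-1}=W_0$ shows that $B$ stabilises the vacuum $W_0$, whence $B\subset\hat D$ (using $B\subset\Pi$, cf. Remark~\ref{alpha=1}); commutativity of $B$ is inherited from $A$, and since conjugation by an admissible operator preserves the orders $\ord_\Gamma$, the $1$-quasi ellipticity of $A$ passes to $B$.

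For well–definedness on equivalence classes, note that $S$ is determined by $W$ only up to the subgroup of admissible operators fixing $W_0$, and an equivalence of pairs $A'=T^{-1}AT$, $W'=WT$ (with $T\in\Adm$) replaces $S$ by $ST$, so $B'=S'A'S'^{-1}$ equals $B$ up to conjugation by a stabiliser of $W_0$; here Remark~\ref{admnote} and the factorisation $T=S_0T_0$ (with $T_0$ of the form~\eqref{special_form}) identify these stabilisers with operators of the special form $f+S^-$, so that $B$ and $B'$ are equivalent rings. The inverse map runs in the opposite direction: given a $1$-quasi elliptic ring $B\subset\hat D$ with monic $P,Q$ of orders $\ord_\Gamma(P)=(0,k)$, $\ord_\Gamma(Q)=(1,l)$, I would use Remark~\ref{admnote} to build an admissible $S$ conjugating away the non–constant coefficients, obtaining $A:=S^{-1}BS\subset k[\partial_1]((\partial_2^{-1}))=V_+$ with constant coefficients; setting $W:=W_0S$ gives $W\cdot A=W_0SS^{-1}BS=W_0BS\subseteq W_0S=W$ and $\Sup(W)=\Sup(W_0S)=W_0$ (an admissible $S$ being invertible of order zero preserves the support lattice), so $(A,W)$ is a $1$-quasi elliptic Schur pair with the required support. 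Since both constructions are computed from the same dressing $S$, up to the $W_0$-stabiliser, they are mutually inverse on equivalence classes, which gives the bijection.

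The hard part will be the very first step: showing that the support condition $\Sup(W)=W_0$ genuinely yields an admissible dressing operator, and that the infinite triangular recursion converges inside $\hat D_1$ while preserving the growth condition $A_1$ rather than only a weaker $A_\alpha$. This is exactly where the restriction to $\alpha=1$ (Remark~\ref{alpha=1}) is indispensable and where the delicate estimates on $\ord_M$ of the coefficients must be controlled through the recursion and matched against the definition of $\Adm_1$; this convergence with uniform estimates is the technical core of the theorem.
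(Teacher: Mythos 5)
Your proposal is correct and follows essentially the same route the paper sketches (deferring the technical details to \cite{Zhe2}): the Sato-type dressing operator $S$ with $W=W_0S$ from Theorem~\ref{Sato} yields $B=SAS^{-1}$ in one direction, and the Schur-type construction of an admissible $S$ from a pair of monic operators of orders $(0,k)$ and $(1,l)$ yields $A=S^{-1}BS$, $W=W_0S$ in the other, with equivalences matched via the stabiliser of $W_0$ exactly as in Remark~\ref{as_this_lemma}. The only point you pass over silently is that in the ring-to-pair direction one must first replace $P,Q$ by a \emph{normalized} pair (conjugating by an invertible operator from $\hat{D}_1$, \cite[Lemma 2.10]{Zhe2}) and extract suitable roots before the analogue of Schur's lemma (Remark~\ref{admnote}, \cite[Lemma 2.11]{Zhe2}) applies --- precisely the step the paper's own sketch records.
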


The proof of the theorem is constructive; the spaces $A$ and $W$ are obtained as follows: $A=S^{-1}BS$, $W=k[z_1^{-1},z_2^{-1}]S$, where $S$ is a monic operator of special type satisfying the condition $A_1$. It is defined by a pair of normalized operators from $B$ (see \cite[\S 2.3.4]{Zhe2} or definition below) using the analogue of Schur's theorem in dimension one (see \cite[Lemma 2.11]{Zhe2}). 

\begin{defin}
\label{defin4,5}
We say that commuting monic operators $P,Q\in \hat{E}_+$ with $\ord_{\Gamma} (P)=(0,k)$, $\ord_{\Gamma} (Q)= (1,l)$  are almost normalized if 
$$P=\partial_2^k+ \sum_{s=-\infty}^{k-1}p_{s}\partial_2^{s} \mbox{\quad} Q=\partial_1\partial_2^l+ \sum_{s=-\infty}^{l-1}q_{s}\partial_2^{s},$$ where $p_s,q_s\in \hat{D}_1$.  

We say that $P,Q$ are  normalized if 
$$P=\partial_2^k+ \sum_{s=-\infty}^{k-2}p_{s}\partial_2^{s} \mbox{\quad} Q=\partial_1\partial_2^l+ \sum_{s=-\infty}^{l-1}q_{s}\partial_2^{s},$$ where $p_s,q_s\in \hat{D}_1$.  
\end{defin}

Recall that by \cite[lemma 2.10]{Zhe2} any two commuting operators of order $(0,k)$ and $(1,l)$ can be normalized by conjugating with an invertible operator $S\in \hat{D}_1$. 
The space $A$ from theorem \ref{schurpair} depends only on the choice of the pair of normalized operators from $B$, and don't depend on the choice of the operators $S$ from \cite[Lemma 2.11]{Zhe2}. If one chooses another pair of normalized operators from $B$, then the resulting Schur pair from theorem \ref{schurpair} will be equivalent to the first one. 
The following lemma clarifies the structure of elements in a ring that has a pair of normalized operators and in any equivalent ring. 
\begin{lemma}
\label{elementary_calculations}
i) If the ring $B\subset \Pi \cap \hat{D}$ of commuting operators contains a pair of normalized operators $P,Q$ with $\ord_{\Gamma}(P)=(0,k)$, $\ord_{\Gamma}(Q)=(1,l)$ ($k>0$), then all operators in $B$ have constant highest coefficients, i.e. if $L=\sum_{s=0}^Nl_s\partial_2^s$, then $l_N$ is an operator with constant coefficients. In particular, $l_N\in D_1$ (i.e. it has a finite order).

Moreover, 
any operator $P'\in B$ with $\ord_{\Gamma}(P')=(0,m)$ has the form
$$
P'=\sum_{s=0}^m p_s'\partial_2^s, \mbox{\quad where $p_m'\in k$ and $p_{m-1}'$ has constant coefficients} 
$$
and any operator $Q'\in B$ with $\ord_{\Gamma}(Q')=(1,n)$ has the form 
$$
Q'=\sum_{s=0}^n q_s'\partial_2^s, \mbox{\quad where $q_{n}'=c_1\partial_1+c_0$, $c_0,c_1\in k$}.
$$

ii) If $B'=S^{-1}BS$, $S\in \hat{D}_1$ is an equivalent $1$-quasi elliptic ring containing a pair of normalized operators $P',Q'$ with $\ord_{\Gamma}(P')=(0,k')$, $\ord_{\Gamma}(Q')=(1,l')$ ($k'>0$), then $S$ has the form 
$$
S =c_0\exp (c_1x_2 \partial_1)\exp (c_2x_2+c_3x_1) \in \hat{D}_1,
$$
where $c_0,c_1,c_2,c_3\in k$ (cf. remark \ref{admnote}). 
\end{lemma}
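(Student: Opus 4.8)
The plan is to prove (i) by comparing leading terms in the commutators $[P,L]=0$ and $[Q,L]=0$ for $L\in B$, and to reduce (ii) to the conjugation formulas of remark \ref{admnote} using (i) together with the growth condition $A_1$.

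For (i), write $L=\sum_{s=0}^N l_s\partial_2^s\in B$ with top $\partial_2$-coefficient $l_N$. Since $P=\partial_2^k+\sum_{s\le k-2}p_s\partial_2^s$ is normalized, the only contribution to $[P,L]$ in $\partial_2$-order $N+k-1$ comes from $[\partial_2^k,l_N\partial_2^N]$, whose leading part is $k(\partial l_N/\partial x_2)\partial_2^{N+k-1}$; as $[P,L]=0$ and $k>0$ in characteristic zero, this gives $\partial l_N/\partial x_2=0$. Knowing $l_N$ is free of $x_2$, the top term of $[Q,L]$ with $Q=\partial_1\partial_2^l+\dots$ is $(\partial l_N/\partial x_1)\partial_2^{N+l}$, so $\partial l_N/\partial x_1=0$ as well; hence $l_N$ has constant coefficients and, lying in $D_1$, finite order. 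The two ``moreover'' assertions follow from the same two-step comparison shifted one degree down: for $P'$ with $\ord_{\Gamma}(P')=(0,m)$ the leading coefficient is a constant of order $0$, i.e. $p_m'\in k$, and since $p_m'$ is scalar it drops out of the top commutator brackets, so repeating the argument on the $\partial_2^{m-1}$-coefficient gives $\partial p_{m-1}'/\partial x_1=\partial p_{m-1}'/\partial x_2=0$; for $Q'$ with $\ord_{\Gamma}(Q')=(1,n)$ the leading coefficient is a constant operator of order $1$, hence of the form $c_1\partial_1+c_0$.

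For (ii), I would first transport the normalized pair of $B$ into $B'$: the operators $S^{-1}PS$ and $S^{-1}QS$ lie in $B'$, remain monic, and keep $\ord_{\Gamma}=(0,k),(1,l)$. Setting $M:=S^{-1}\partial_2 S=\partial_2+m_0$ and $N:=S^{-1}\partial_1 S$ (with $m_0,N\in\hat D_1$), an expansion of $(\partial_2+m_0)^k$ gives $S^{-1}PS=\partial_2^k+k\,m_0\,\partial_2^{k-1}+(\text{lower in }\partial_2)$, while the top $\partial_2$-coefficient of $S^{-1}QS$ is exactly $N$. Applying (i) to $B'$ (which contains the normalized $P',Q'$) shows that $N$ is a constant operator of order $1$, $N=c_1\partial_1+c_3$, and that $k\,m_0$ has constant coefficients, so $m_0\in k[\partial_1]$. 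To bound the $\partial_1$-order of $m_0$ I would use that $S^{-1}PS\in B'$, having $\ord_{\Gamma}=(0,k)$, satisfies $A_1(k)$: its coefficient of $\partial_1^i\partial_2^{k-1}$ is the $i$-th (scalar) coefficient of $k m_0$, of $\ord_M=0$, so the inequality $\ord_M\ge i+(k-1)-k$ forces $i\le 1$; thus $m_0=c_1\partial_1+c_2'$ and $S^{-1}\partial_2 S=\partial_2+c_1\partial_1+c_2'$.

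It remains to fix the leading coefficient of $N$ to be $1$, and this is the crux I expect to be the main obstacle. Comparing the identity $\partial_1 S=S(c_1\partial_1+c_3)$ in top $\partial_1$-order, using that $S=f+S^-$ has invertible leading part $f$, should give coefficient $1$; equivalently, the monic normalization of the Schur pairs of $B$ and $B'$ (fixed support $\langle z_1^{-i}z_2^{-j}\rangle$) must be invoked to exclude the scaling automorphism of remark \ref{nt}, which otherwise permits $S^{-1}\partial_1 S=c_1\partial_1$ with $c_1\ne 1$. The companion delicate point is justifying that membership in the $1$-quasi-elliptic ring $B'$ (equivalently, conjugation by $S$) really preserves the precise bound $A_1(k)$ used above. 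Once $S^{-1}\partial_1 S=\partial_1+c_3$, I would choose $c_0,c_1,c_2$ so that the operator $T_0=c_0\exp(c_1x_2\partial_1)\exp(c_2x_2+c_3x_1)$ of remark \ref{admnote} satisfies $T_0^{-1}\partial_i T_0=S^{-1}\partial_i S$ for $i=1,2$; then $R:=ST_0^{-1}$ commutes with both $\partial_1$ and $\partial_2$, so it has constant coefficients, the growth condition of $\hat D_1$ makes it a polynomial in $\partial_1$, and invertibility forces $R\in k^*$. Hence $S=R\,T_0$ has the asserted form.
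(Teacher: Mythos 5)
Part (i) of your proposal is the paper's proof essentially verbatim: kill the $\partial_2^{N+k-1}$ term of $[P,L]=0$ to get $x_2$-independence of the top coefficient, kill the $\partial_2^{N+l}$ term of $[Q,L]=0$ to get $x_1$-independence, and, once the top coefficient is a scalar so that the brackets $[p_{k-2},p_m']$ and $[q_{l-1},p_m']$ vanish, read the same two identities one degree lower to handle $p_{m-1}'$. Part (ii) is where you genuinely diverge: the paper pulls the normalized pair of $B'$ back into $B$ via $\tilde P=SP'S^{-1}$, $\tilde Q=SQ'S^{-1}$, applies (i) in $B$, splits off $S_0=\exp(cx_1)$ so that $S'=SS_0^{-1}$ is $x_1$-independent, exhibits $S'$ as $\exp(F(x_2,\partial_1))$ with $F$ polynomial, and excludes $\deg_{\partial_1}F\ge 2$ because conjugation by such an exponential would take $\tilde P$ (which satisfies $A_1(m)$ for some $m$) outside $\Pi$, contradicting $B'\subset\Pi$ (remark \ref{alpha=1}). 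You instead push $P,Q$ forward, identify $N=S^{-1}\partial_1S$ and $km_0$ with the top, resp.\ subleading, coefficients of $S^{-1}QS$ and $S^{-1}PS$, and then try to cap $\deg_{\partial_1}m_0$ coefficientwise.

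That capping step is the genuine gap. You impose the \emph{sharp} bound $A_1(k)$ on $S^{-1}PS$, but nothing in the hypotheses gives it to you: a $1$-quasi elliptic ring is only required to contain \emph{two} operators satisfying the sharp $A_1$, and what is known for an arbitrary element of $B'$ is membership in $\Pi$, i.e.\ $A_1(m)$ for \emph{some} $m$. Applied to the single coefficient $km_0$ of $\partial_2^{k-1}$ (whose entries have $\ord_M=0$), $A_1(m)$ only yields $\deg_{\partial_1}m_0\le m-k+1$, which does not exclude degree $\ge 2$. The repair is not a better bound on one coefficient but the paper's global argument: if $m_0$ (or $N-\partial_1$) had $\partial_1$-degree $d\ge 2$, then solving $\partial_2(S)=Sm_0$, $\partial_1(S)=S(N-\partial_1)$ gives $S=c\exp\bigl(x_2m_0+x_1(N-\partial_1)\bigr)$ whose $\partial_1^r$-coefficients have $\ord_M$ growing only like $r/d$, so $S$, hence $S^{-1}PS$, violates $A_1(m)$ for \emph{every} $m$ and cannot lie in $\Pi\supset B'$. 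Two smaller remarks: you assert without argument that $N$ has order exactly $1$ (a priori part (i) only gives that it is a constant-coefficient element of $D_1$), and the normalization $c_1=1$ for the $\partial_1$-coefficient of $N$, which you rightly single out as the crux, is in fact also passed over in the paper's own proof (it tacitly takes $\tilde q_{l'}=\partial_1+c$ when invoking $S_0$); an operator such as $\exp(\alpha x_1\partial_1)$, admissible under definition \ref{rings} and conjugating $\partial_1$ to $e^{\alpha}\partial_1$ while preserving normalized pairs, shows this really is the scaling phenomenon of remark \ref{nt} and needs an explicit extra normalization. Your final step (matching $T_0$ and concluding $ST_0^{-1}\in k^*$ from commutation with $\partial_1,\partial_2$) is sound and parallels remark \ref{admnote}.
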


\begin{proof}
i) We have
\begin{equation}
\label{vych1}
0=[P,P']=k\partial_2(p_m')\partial_2^{k+m-1}+k\partial_2(p_{m-1}')\partial_2^{k+m-2}+[p_{k-2},p_m']\partial_2^{k+m-2}+
\mbox{\quad terms of lower degree}.
\end{equation}
Hence $\partial_2(p_m')=0$, i.e. $p_m'$ don't depend on $x_2$. Then we have
\begin{equation}
\label{vych2}
0=[Q,P']=[\partial_1,p_m']\partial_2^{m+l}+[\partial_1,p_{m-1}']\partial_2^{l+m-1}+[q_{l-1},p_m']\partial_2^{l+m-1}+
\mbox{\quad terms of lower degree}.
\end{equation}
Hence $[\partial_1,p_m']=0$ and therefore $p_m'$ must be an operator with constant coefficients. So, $p_m'\in D_1$ (and  clearly these arguments work for any operator from $B$). Since $\ord_{\Gamma}(P')=(0,m)$, $p_m'$ is a constant, and since $\ord_{\Gamma}(Q')=(1,n)$, $q_n'$ must be a linear polynomial.  
But then from \eqref{vych2} we have $[\partial_1,p_{m-1}']=0$, i.e. $p_{m-1}'$ don't depend on $x_1$, and from \eqref{vych1} we have $\partial_2(p_{m-1}')=0$, i.e.  $p_{m-1}'$  must be an operator with constant coefficients. 

ii) We have 
$P'=S^{-1}\tilde{P}S$, $Q'=S^{-1}\tilde{Q}S$ for some operators $\tilde{P}$, $\tilde{Q}\in B$. Since $S$ is invertible, we obviously have 
$$
S=c \in k^* \mbox{\quad} \mod (x_1,x_2). 
$$
Therefore, since by item i) the highest terms of operators $\tilde{P}, \tilde{Q}$ are constant-coefficient operators, we must have $\ord_{\Gamma}(\tilde{P})=(0,k')$  and 
$\ord_{\Gamma}(\tilde{Q})=(1,l')$. From remark \ref{admnote} we know that there exists an operator $S_0$ of the form $\exp (cx_1)$ such that $S_0^{-1}\tilde{q}_{l'}S_0=\partial_1$ (here $\tilde{q}_{l'}$ is a linear polynomial with constant coefficients). Then obviously the operator $S'=SS_0^{-1}$ don't depend on $x_1$. So, $S=S'S_0$. 

From remark \ref{alpha=1} we know that $\tilde{P}, \tilde{Q}\in \Pi$ and from item i) we know that $\tilde{p}_{k'}, \tilde{p}_{k'-1}$ are operators with constant coefficients (and $\tilde{p}_{k'}=\partial_2^{k'}$). Thus, $S'$ has the form
$$
S'=\exp (F(x_2, \partial_1)),
$$
where $F$ is a polynomial in $x_2, \partial_1$. This polynomial is linear iff $\tilde{p}_{k'-1}$ is linear. But if it is not linear, then the operator $(S')^{-1}\tilde{P}S'$ will not satisfy the condition $A_1$ (as $\tilde{P}$ satisfies $A_1$ for some $(k,l)$), a contradiction. So, it is linear and we are done. 
\end{proof}

\begin{nt}
\label{as_this_lemma}
As this lemma shows, if there is a pair of normalized operators in $B$, then any equivalent ring $B'$ that has a pair of normalized operators is obtained from $B$ by conjugation with an operator of special form, and this conjugation is equivalent to a linear change of variables 
\begin{equation}
\label{special_change}
\partial_2\mapsto \partial_2+c\partial_1 +b, \mbox{\quad} \partial_1\mapsto \partial_1+d
\end{equation}
with $c,b,d\in k$. The Schur pair corresponding to such a ring $B'$ will be equivalent to the first one as well. 

Conversely, if one starts from any Schur pair $(A,W)$ in a given equivalence class, then the ring $B$ can be constructed as $B=SAS^{-1}$, where $S$ now comes from the analogue of the Sato theorem (see theorem \ref{Sato} below). If $(A',W')$ is an equivalent Schur pair, then $A'=T^{-1}AT$, $W'=WT$ for some $1$-admissible operator $T$, which can be written (see remark \ref{admnote}) in the form $T=T'T_0$, where 
$T_0$ has the form \eqref{special_form}, and $T'=1+T^-$, where $T^-\in \hat{D}_1[[\partial_2^{-1}]]\partial_2^{-1}$. Then it is easy to see that the corresponding Sato operator for the space $W'$ from theorem \ref{Sato} is $S'=T_0^{-1}ST'T_0$. So, the corresponding ring $B'=S'A'(S')^{-1}=T_0^{-1}BT_0$, i.e. it is obtained from $B$ by  the linear change \eqref{special_change}. It will automatically contain a  pair of normalized operators. 

To find a pair of normalized operators in a given ring $B$ we need sometimes to replace $B$ by an equivalent ring (see \cite[Lemma 2.10]{Zhe2}). 
\end{nt}

\begin{theo}{(\cite[Th.3.1]{Zhe2})}
\label{Sato}
Let $W$ be a $k$-subspace 
$W\subset  k[z_1^{-1}]((z_2))$ 
with $\Sup (W)=W_0$.
Let $\{w_{i,j}, i,j\ge 0\}$ be the unique basis in $W$ with the property $w_{i,j}=z_1^{-i}z_2^{-j}+w_{i,j}^-$, where $w_{i,j}^-\in k[z_1^{-1}][[z_2]]z_2$. Assume that all elements $w_{i,j}$ satisfy the condition $A_{1}$. 

Then there exists a unique operator $S=1+S^-$ satisfying $A_{1}$, where $S^-\in \hat{D}_1[[\partial_2^{-1}]]\partial_2^{-1}$, such that 
$W_0S=W $. 
\end{theo}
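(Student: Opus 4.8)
The plan is to build $S$ by a Sato-type dressing recursion, solving for its coefficients order by order so that right multiplication by $S$ carries the reference subspace $W_0=k[z_1^{-1},z_2^{-1}]$ onto $W$. The first step is to reduce the subspace identity $W_0S=W$ to the family of membership conditions $z_1^{-i}z_2^{-j}\cdot S\in W$ for all $i,j\ge 0$. The implication $W_0S=W\Rightarrow$ these conditions is trivial; conversely, if all $z_1^{-i}z_2^{-j}\cdot S\in W$ then $W_0S\subseteq W$, and since right multiplication by $S=1+S^-$ preserves lowest terms we have $\Sup(W_0S)=W_0=\Sup(W)$, so the two filtered subspaces have the same associated graded and hence coincide.

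Next I would make the action explicit. Writing $S=1+\sum_{m\ge 1,\,q\ge 0}a_{m,q}(x_1,x_2)\,\partial_1^{q}\partial_2^{-m}$ with $a_{m,q}\in k[[x_1,x_2]]$, the right action of $\hat{E}_+$ on $k[z_1^{-1}]((z_2))$ is the reduction of $\partial_1^{i}\partial_2^{j}S$ modulo $(x_1,x_2)$; the Leibniz rule gives
$$
z_1^{-i}z_2^{-j}\cdot S=z_1^{-i}z_2^{-j}+\sum_{m\ge 1,\,q\ge 0}\ \sum_{0\le s\le i,\,0\le t\le j}\binom{i}{s}\binom{j}{t}\,(\partial_{x_1}^{s}\partial_{x_2}^{t}a_{m,q})(0,0)\,z_1^{-(q+i-s)}z_2^{\,m+t-j}.
$$
Two features drive the recursion. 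The lowest term is exactly $z_1^{-i}z_2^{-j}$ and every correction has strictly larger $z_2$-exponent $m+t-j>-j$. Moreover the corrections of $z_2$-exponent $\le 0$ force $t\le j-1$ (as $m\ge 1$), so they involve only derivatives $\partial_{x_1}^{s}\partial_{x_2}^{t}a_{m,q}$ of total order $s+t<i+j$, whereas the top-order derivatives $(\partial_{x_1}^{i}\partial_{x_2}^{j}a_{m,q})(0,0)$ appear only in the coefficients of $z_1^{-q}z_2^{m}$ with $m\ge 1$, each with coefficient $1$.

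Then I would run the recursion on $i+j$. Because $\Sup(W)=W_0$, membership $g\in W$ is tested by a triangular reduction against the normalized basis: each monomial $z_1^{-q}z_2^{-j'}$ ($j'\ge 0$) of $g$ of nonpositive $z_2$-exponent is the lowest term of $w_{q,j'}$, one subtracts the corresponding multiples, and $g\in W$ iff the remainder, now supported in positive $z_2$-exponents, vanishes. Applied to $g=z_1^{-i}z_2^{-j}\cdot S$, the subtraction uses by the previous paragraph only already-computed derivatives of order $<i+j$, and the vanishing of the coefficient of each $z_1^{-q}z_2^{m}$ ($m\ge 1$) in the remainder is one linear equation whose only order-$(i+j)$ unknown is $(\partial_{x_1}^{i}\partial_{x_2}^{j}a_{m,q})(0,0)$, occurring with coefficient $1$. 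Hence at each step these top derivatives are uniquely determined by lower-order data and by the $w_{i,j}$; this produces a unique formal operator $S=1+S^-$ with $z_1^{-i}z_2^{-j}\cdot S\in W$ for all $i,j$, and establishes uniqueness in the theorem, since any admissible $S$ satisfies the same equations.

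The step I expect to be the main obstacle is verifying that this formal $S$ actually lies in $\hat{D}_1[[\partial_2^{-1}]]\partial_2^{-1}$ and satisfies $A_1$, i.e. that $\ord_M(a_{m,q})\ge q-m$ for all $m,q$; here the hypothesis that every $w_{i,j}$ satisfies $A_1$ is used. Since $\ord_\Gamma(w_{i,j})=(i,j)$, condition $A_1(i+j)$ says that the coefficient of $z_1^{-q}z_2^{m}$ in $w_{i,j}$ vanishes unless $q-m\le i+j$. I would then prove $(\partial_{x_1}^{i}\partial_{x_2}^{j}a_{m,q})(0,0)=0$ for all $i+j<q-m$ by induction on $i+j$: in the equation above the contribution from the $w_{i',j'}$ vanishes by $A_1$ when $i+j<q-m$, and a direct index check shows that every lower-order correction term is a derivative $(\partial_{x_1}^{s}\partial_{x_2}^{t}a_{m',q'})(0,0)$ with $s+t<q'-m'$, hence zero by the inductive hypothesis. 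This gives $\ord_M(a_{m,q})\ge q-m$, which is precisely $A_1(0)$ for $S$ and, for each fixed $m$, forces $\ord_M(a_{m,q})\to\infty$ as $q\to\infty$, the growth condition defining $\hat{D}_1$. The genuinely delicate part is this index bookkeeping together with checking that the triangular reduction is consistent (that the nonpositive-$z_2$ part of $z_1^{-i}z_2^{-j}\cdot S$ is indeed a combination of lowest terms of the $w_{q,j'}$ with $j'<j$); the remaining linear algebra is routine.
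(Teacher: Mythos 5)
The paper itself gives no proof of Theorem \ref{Sato}: it is quoted verbatim from \cite[Th.~3.1]{Zhe2}, where it is established by exactly the Sato-type dressing recursion you describe. Your argument --- reducing $W_0S=W$ to the memberships $z_1^{-i}z_2^{-j}\cdot S\in W$, determining the Taylor coefficients $(\partial_{x_1}^i\partial_{x_2}^ja_{m,q})(0,0)$ by a triangular recursion on $i+j$ (each appearing with coefficient $1$ only in the equation for the coefficient of $z_1^{-q}z_2^{m}$), and deducing $\ord_M(a_{m,q})\ge q-m$ from condition $A_1$ on the $w_{i,j}$ by the index count $q'-m'=q-m-(i+j)+(s+t)$ --- is correct, and the bookkeeping you flag as delicate does close up as you indicate.
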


The Schur pairs from theorem \ref{schurpair} one to one correspond to pairs of subspaces in the space $k[[u]]((t))$ via an isomorphism 
\begin{equation}
\label{psi_1}
\psi_1:k[z_1^{-1}]((z_2))\cap \Pi\simeq k[[u]]((t)) \mbox{\quad } z_2\mapsto t, z_1^{-1}\mapsto ut^{-1},
\end{equation}
where $k[z_1^{-1}]((z_2))\cap \Pi$ denotes the $k$-subspace generated by series satisfying the condition $A_1$ (see \cite[Cor.3.3]{Zhe2}). We will denote these pairs by the same letters $(A,W)$. Obviously, $W\cdot A\subset W$. 

\begin{defin}{(\cite[Def.3.5,3.6]{Zhe2})}
\label{goodring}
For the ring $A\subset k[[u]]((t))$  define 
$$
N_A=GCD\{\nu_t(a), \mbox{\quad} a\in A \mbox{\quad such that \quad} \nu (a)=(0,*)\},
$$ 
where $*$ means any value of the valuation. Define 
$$
\tilde{N}_A=GCD\{\nu_t(a), \mbox{\quad} a\in A\}. 
$$ 
We'll say that the ring $A$ is strongly admissible if there is an element $a\in A$ with $\nu (a)=(1,*)$ and $\tilde{N}_A=N_A$. 
\end{defin}

\begin{defin}{(\cite[Def.3.8]{Zhe2})}
\label{ggg1}
For $1$-quasi elliptic commutative ring $B\subset \hat{D}$ we define numbers $\tilde{N}_B$, $N_B$ to be equal to the numbers $\tilde{N}_A$, $N_A$, where $A$ is the ring corresponding to $B$ by theorem \ref{schurpair} (after applying the isomorphism \eqref{psi_1}). We say that $B$ is strongly admissible if $A$ is strongly admissible. 

For a strongly admissible ring $B$ we define the rank of $B$ as 
$$
\rk (B)=N_B=\tilde{N}_B
$$
\end{defin}

\begin{nt}
\label{rank_for_PDO}
If the ring $B$ in the definition \ref{ggg1} is a ring of PDO's, then the numbers $\tilde{N}_B$, $N_B$ and the rank can be defined similarly to definition \ref{goodring}:
$$
N_B=GCD\{\ord (Q), \mbox{\quad} Q\in B \mbox{\quad such that \quad} \ord_{\Gamma} (Q)=(0,*)\}.
$$ 
 Define 
$$
\tilde{N}_B=GCD\{\Ord (Q), \mbox{\quad} Q\in B\},
$$ 
where $\Ord$ means the usual order in the ring $D$.

Analogously, $B$ is strongly admissible if there is an element $Q\in B$ with $\ord_{\Gamma} (Q)=(1,*)$ and $\tilde{N}_B=N_B$. Its rank $\rk (B)=N_B=\tilde{N}_B$. 

We would like to emphasize that the rank of the ring $B$ defined as $N_B=\tilde{N}_B$  is less or equal to the rank of the sheaf of common eigenfunctions of the operators  from $B$  (cf. \cite[Rem.2.3]{Ku3}, this notion of rank is often used in various papers).   This follows from \cite[Prop. 3.3, Prop. 3.2, Th.2.1]{Ku3}. 

Below we will write $\Rk (B)$ to denote the rank in the second sense.  
\end{nt}

\begin{theo}{(\cite[Th.3.4]{Zhe2})}
\label{dannye2}
There is a one to one correspondence between the set of classes of equivalent $1$-quasi elliptic strongly admissible finitely generated rings of operators in $\hat{D}\cap \Pi$ of rank $r$ and the set of isomorphism classes of geometric data $\cm_r$ of rank $r$. 
\end{theo}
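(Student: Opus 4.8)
The plan is to derive this bijection by composing the ring--Schur-pair correspondence already available with an equivalence between Schur pairs and geometric data. Concretely, Theorem \ref{schurpair} together with the isomorphism \eqref{psi_1} identifies the classes of equivalent $1$-quasi elliptic rings $B\subset \hat{D}$ with the classes of equivalent $1$-quasi elliptic Schur pairs $(A,W)$, now viewed as subspaces of $k[[u]]((t))$. Under this identification the extra hypotheses --- finite generation and strong admissibility of $B$ of rank $r$ --- translate, by Definitions \ref{goodring} and \ref{ggg1}, into the statement that $A$ is a finitely generated $k$-algebra, strongly admissible with $N_A=\tilde{N}_A=r$, i.e. $A$ contains an element of valuation $(1,*)$ and the values $\nu_t$ generate exactly $r\dz$. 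Thus it suffices to construct a bijection between the isomorphism classes of geometric data of rank $r$ (morphisms as in Definition \ref{geomcategory}) and the equivalence classes of such Schur pairs (equivalence as in Definition \ref{admissible}).

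First I would fix the forward functor $F$ sending a geometric datum $(X,C,P,\cf,\pi,\phi)$ to the pair $(A,W)=(\chi_1(H^0(X\backslash C,\co)),\,\chi_1(H^0(X\backslash C,\cf)))$ built in Section \ref{pairs}, and check that $(A,W)$ is indeed a strongly admissible $1$-quasi elliptic Schur pair of rank $r$. Here Definition \ref{geomdata}, item \ref{dat4}, gives $\nu(\pi(f))=(0,r)$ and $\nu(\pi(g))=(1,0)$, which furnish an element of valuation $(1,*)$ in $A$ and force $N_A=\tilde{N}_A=r$; the filtration \eqref{qwerty1} exhibits $\tilde{A}$ as a homogeneous coordinate ring, and item \ref{dat6} pins down $\Sup(W)$ as the required monomial lattice.

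Next I would build the quasi-inverse functor $G$. Given $(A,W)$, set $X=\Proj(\tilde{A})$ and $\cf=\Proj(\tilde{W})$ for the Rees constructions $\tilde{A}=\bigoplus_n A_n s^n$, $\tilde{W}=\bigoplus_n W_n s^n$. Finite generation of $A$ makes $\tilde{A}$ a finitely generated graded algebra, so $X$ is a projective variety, of dimension $2$ because $A$ has Krull dimension $2$. The curve $C$ is the divisor at infinity $\Proj(\gr A)$ with $\gr A=\bigoplus_n A_n/A_{n-1}$; its ampleness, the $\dq$-Cartier property, and its self-intersection index follow from the graded structure of $\tilde{A}$ and Remark \ref{Kur}. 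The point $P$, its regularity on $C$ and on $X$, and the local homomorphism $\pi:\hat{\co}_P\to k[[u,t]]$ with $k[[u,t]]$ free of rank $r$ are read off the rank-two valuation $\nu$, which encodes exactly the flag $P\in C\subset X$, while $\phi$ comes from the embedding $W\subset k[[u]]((t))$. This recovers a geometric datum of rank $r$.

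That $F$ and $G$ are mutually quasi-inverse is where the real content lies, and this is the step I expect to be the main obstacle. For $G\circ F\simeq\mathrm{id}$ the isomorphisms $X\simeq\Proj(\tilde{A})$ and $\cf\simeq\Proj(\tilde{W})$ are precisely the identifications recalled in Section \ref{pairs}; the delicate point is to verify that the local package $(P,\pi,\phi)$ reconstructed from the valuation agrees with the original, which requires comparing the completion $\hat{\co}_{X,P}$ with $k[[u,t]]$ and reproducing the regularity and the trivialization condition of Definition \ref{geomdata}, item \ref{dat6}. For $F\circ G\simeq\mathrm{id}$ one must show the reconstructed datum feeds back the same Schur pair, which is the surface analogue of the classical Krichever correspondence and rests squarely on finite generation and strong admissibility. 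Finally I would match morphisms: an isomorphism $(\beta,\psi)$ in $\cq$ carries the data $h,\xi$ of Definition \ref{geomcategory}, and the normalization of $h^*(u),h^*(t)$ explained in Remark \ref{nt} is exactly what makes $h,\xi$ correspond to the admissible operator $T$ of Definition \ref{admissible}; hence isomorphism classes of data match equivalence classes of Schur pairs. Composing with Theorem \ref{schurpair} then yields the asserted one-to-one correspondence with rings of operators.
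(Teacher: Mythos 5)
Your proposal matches the route actually taken: the paper does not reprove this statement but imports it from \cite{Zhe2}, where Th.~3.4 is obtained exactly by composing the ring--Schur-pair correspondence (Th.~3.2 there, recalled here as Theorem~\ref{schurpair}) with the equivalence between $1$-quasi elliptic strongly admissible Schur pairs and geometric data via $X=\Proj(\tilde A)$, $\cf=\Proj(\tilde W)$ (Th.~3.3 there, whose quasi-inverse functor is invoked in Remark~\ref{nt}). Your identification of the genuinely hard steps --- recovering the local package $(P,\pi,\phi)$ and verifying $F\circ G\simeq\mathrm{id}$, plus matching the normalization of $h$ with admissible operators --- is where the cited proof does its work, so the outline is faithful even though it is a plan rather than a full argument.
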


If we have a ring $B\subset D$ of commuting PDOs satisfying the property \ref{property}, then by \cite[Lemma 2.6]{Zhe2} and by \cite[Prop.2.4]{Zhe2} (cf. also the beginning of section 3.1 in loc. cit.) there is a linear change of variables making this ring $1$-quasi elliptic strongly admissible. 
Moreover, as it follows from the proofs of \cite[Lemma 2.6, Prop.2.4]{Zhe2}, almost all linear changes of variables preserve the property of the ring to be $1$-quasi elliptic strongly admissible. In particular, for almost all linear changes we have the following extra property of the operators $P,Q$ from definition \ref{elliptic}:
\begin{equation}
\label{extraproperty}
\sigma (P)=\xi_2^k+\sum_{q=1}^{k}h_q\xi_1^q\xi_2^{k-q}, \mbox{\quad } h_k\neq 0; \mbox{\quad} \sigma (Q)=\xi_1\xi_2^l+\sum_{q=2}^{l+1}c_q\xi_1^q\xi_2^{l+1-q}, \mbox{\quad } c_{l+1}\neq 0.
\end{equation}

\begin{nt}
\label{remark}
From the construction in 
\cite[Sec.3]{Zhe2} explaining the correspondence between  geometric data and $1$-quasi elliptic strongly admissible rings it follows that the ring after such linear change of variables corresponds to the data with the same surface and divisor, but with probably other sheaf, other point $P$ and trivializations $\pi ,\phi$ (cf. also remark \ref{nt} and \cite[Th.2.1, Prop.3.3]{Ku3}).  
\end{nt}

\begin{nt}
\label{B_of_rank_r}
If the ring $B$ of PDOs is $1$-quasi-elliptic strongly admissible, then, obviously, there exist two operators $P,Q$ as in definition \ref{elliptic} with $k=l+1=\Ord (P)$. In this situation, examining the arguments from the proof of Lemma 2.10, item 1 in \cite{Zhe2}, we see that there are some $\beta\in k$ and $f\in k[[x_1,x_2]]^*$ such that the operators $f^{-1}(P+\beta Q)f$, $f^{-1}Qf$ are normalized (in the sense of \cite[Def. 2.19]{Zhe2}). Thus, in the equivalent class of $B$ we can find a ring of PDOs with a pair of normalized operators. 

As the arguments from remark \ref{as_this_lemma} show, any Schur pair equivalent to the Schur pair associated with $B$ leads to a ring $B'$ obtained from $B$ by the linear change of variables \eqref{special_change}. Thus, $B'$ is also a ring of PDOs! 
\end{nt}

There is another nice property of $1$-quasi elliptic subrings of partial differential operators claiming the "purity" of such rings:
\begin{prop}{(\cite[Prop.3.1]{Zhe2})}
\label{purity}
Let $B\subset D\subset \hat{D}$ be a $1$-quasi elliptic ring of commuting partial differential operators. Then any ring $B'\subset \hat{D}$ of commuting operators such that $B'\supset B$ is a ring of partial differential operators, i.e. $B'\subset D$.
\end{prop}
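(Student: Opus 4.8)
The plan is to reduce the statement to a centraliser computation and then run a downward induction on the order in $\partial_2$, the decisive non-classical input being that the growth condition built into $\hat{D}_1$ forces any constant-coefficient element of $\hat{D}_1$ to have finite order in $\partial_1$.

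First I would observe that it suffices to control the centraliser of the quasi-elliptic pair. Since $B$ is $1$-quasi elliptic it contains operators $P,Q$ as in definition \ref{elliptic} with $\ord_{\Gamma}(P)=(0,k)$, $\ord_{\Gamma}(Q)=(1,l)$; as $B'\supseteq B$ is commutative, every $L\in B'$ satisfies $[L,P]=[L,Q]=0$. Hence it is enough to prove that any $L\in\hat{D}$ commuting with both $P$ and $Q$ lies in $D$. Next I would normalise the generators without leaving $D$. By remark \ref{B_of_rank_r} there are $\beta\in k$ and a unit $f\in k[[x_1,x_2]]^*$ such that $f^{-1}(P+\beta Q)f$ and $f^{-1}Qf$ are normalised in the sense of definition \ref{defin4,5}. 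Conjugation by the unit function $f$ preserves both $D$ and $\hat{D}$ and conjugates $B'$ simultaneously, and $B'\subseteq D$ if and only if $f^{-1}B'f\subseteq D$; so I may replace $B$ by its conjugate and assume it contains normalised operators
$$
P=\partial_2^{k}+\sum_{s\le k-2}p_s\partial_2^{s},\qquad Q=\partial_1\partial_2^{l}+\sum_{s\le l-1}q_s\partial_2^{s},
$$
whose coefficients $p_s,q_s\in D_1$ have finite order in $\partial_1$; in particular the $\partial_2^{k-1}$-coefficient of $P$ vanishes and the $\partial_2^{l}$-coefficient of $Q$ is exactly $\partial_1$.

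The engine of the proof is the remark that if $a=\sum_{q\ge0}a_q\partial_1^{q}\in\hat{D}_1$ has constant coefficients $a_q\in k$, then $a\in D_1$: the defining condition of $\hat{D}_1$ applied with $N=1$ yields an $n$ with $\ord_M(a_m)>1$ for $m\ge n$, forcing $a_m=0$ for $m\ge n$. Now write $L=\sum_{s=0}^{N}l_s\partial_2^{s}$ with $l_s\in\hat{D}_1$ (the sum is finite because $L\in\hat{D}=\hat{D}_1[\partial_2]$), and prove by downward induction on $s$ that every $l_s$ lies in $D_1$, whence $L\in D$. Assume $l_{s+1},\dots,l_N\in D_1$. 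Comparing the coefficient of $\partial_2^{\,s+l}$ in $0=[Q,L]$ isolates, using $q_l=\partial_1$, the term $[\partial_1,l_s]=\partial_{x_1}(l_s)$; all remaining contributions involve only $l_{s+1},\dots,l_N$ and the finite-order coefficients of $Q$, so they assemble into an operator $F_1\in D_1$ and we get $\partial_{x_1}(l_s)=F_1$. Comparing the coefficient of $\partial_2^{\,s+k-1}$ in $0=[P,L]$ isolates, using $p_{k-1}=0$ and the constant leading symbol $\partial_2^{k}$, the term $k\,\partial_{x_2}(l_s)$, and the rest is again a finite-order expression $F_2\in D_1$ in the higher $l_t$ and the coefficients of $P$; thus $\partial_{x_2}(l_s)=F_2$. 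Since $F_1,F_2$ arise as partial derivatives of the single operator $l_s$ they satisfy $\partial_{x_2}F_1=\partial_{x_1}F_2$, so integrating coefficientwise in $x_1,x_2$ produces a particular solution $l_s^{0}\in D_1$ of finite order, and $l_s-l_s^{0}$ has vanishing $x_1$- and $x_2$-derivatives, hence constant coefficients; by the growth remark above $l_s-l_s^{0}\in D_1$, so $l_s\in D_1$. The base case $s=N$ is the same computation with $F_1=F_2=0$, giving at once $\partial_{x_1}(l_N)=\partial_{x_2}(l_N)=0$ and so $l_N\in D_1$.

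The hypothesis $B'\subseteq\hat{D}$ is exactly what makes this work: it guarantees that each $L$ has finite order in $\partial_2$, providing a top coefficient $l_N$ from which the downward induction can start (for operators with infinitely many negative powers of $\partial_2$ there is no such anchor, and indeed purity would fail). The main obstacle is the bookkeeping in the inductive step: one must check carefully that the chosen graded pieces of $[Q,L]$ and $[P,L]$ really isolate $\partial_{x_1}(l_s)$ and $\partial_{x_2}(l_s)$, with every other term falling into $D_1$ by the inductive hypothesis and the finiteness of the coefficients of $P$ and $Q$, and then that the compatibility $\partial_{x_2}F_1=\partial_{x_1}F_2$ lets one integrate back to $l_s$ modulo a constant-coefficient operator. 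The conceptual heart, by contrast, is the short growth argument that pins down that constant-coefficient ambiguity to finite order.
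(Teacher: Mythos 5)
The paper itself gives no proof of Proposition \ref{purity}: it is quoted from \cite[Prop.~3.1]{Zhe2}, so your argument can only be judged on its own terms. Its skeleton --- reduce to the centraliser of the quasi-elliptic pair, run a downward induction on the $\partial_2$-degree of $L=\sum_{s=0}^N l_s\partial_2^s$, and use the growth condition built into $\hat{D}_1$ to show that a constant-coefficient element of $\hat{D}_1$ is a polynomial in $\partial_1$ --- is sound and is surely the heart of any proof of this statement; the coefficient bookkeeping in the inductive step and the final ``growth remark'' are correct as far as they go.

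The one genuine gap is the normalisation step. Remark \ref{B_of_rank_r} is stated for $1$-quasi-elliptic \emph{strongly admissible} rings and uses a pair with $\Ord(P)=k=l+1$; Proposition \ref{purity} assumes neither, so you cannot invoke it. Worse, the reduction you want is in general unavailable: conjugation by a unit $f\in k[[x_1,x_2]]^*$ does make the top $\partial_2$-coefficient of $Q$ equal to $\partial_1$ (solve $\partial_{x_1}(\log f)=-c$), but it can only remove the $\partial_1$-order-zero part of $p_{k-1}$, so you may not assume $p_{k-1}=0$; the general normalisation of \cite[Lemma 2.10]{Zhe2} is by an invertible $S\in\hat{D}_1$, and conjugating by such an $S$ does not preserve $D$. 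Fortunately your induction survives without $p_{k-1}=0$. After arranging $q_l=\partial_1$, the coefficient of $\partial_2^{k+l-1}$ in $[P,Q]=0$ gives $\partial_{x_1}(p_{k-1})=0$, so $p_{k-1}$ has $x_1$-independent coefficients. In the inductive step use the $Q$-equation first: it gives $\partial_{x_1}(l_s)=F_1\in D_1$, which forces the coefficients $a_q$ of $l_s=\sum_q a_q\partial_1^q$ to be $x_1$-independent for $q>\deg_{\partial_1}F_1$; hence $[p_{k-1},l_s]=[p_{k-1},\sum_{q\le \deg_{\partial_1}F_1}a_q\partial_1^q]\in D_1$, the extra term in the coefficient of $\partial_2^{k+s-1}$ of $[P,L]$ is harmless, and one still gets $\partial_{x_2}(l_s)\in D_1$. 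The rest of your argument then goes through verbatim. With this repair the proof is complete.
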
 

\subsection{Schur pairs, ribbons and moduli space}
\label{sec.2.3}

We would like to recall that in the classical KP theory there are well known geometric data classifying the commutative rings of ordinary differential operators. These data consist of a projective curve over a field $k$ plus a line bundle (or a  torsion free sheave if the curve is singular) plus some additional data (a distinguished point $p$ of the curve
plus a formal local parameter at $p$ , and a formal trivialization at $p$ of the sheaf).  
Also there is a map which associate to each such data a pair of subspaces $(A,W)$ ("Schur pair") in the space $V=k((z))$, where $A\varsupsetneq k$ is a stabilizer $k$-subalgebra of $W$ in $V$: $A\cdot W\subset W$, and $W$ is a point of the infinite-dimensional Sato grassmannian (see e.g. \cite{Mul} for details).
 This map is usually called as the Krichever  map in the literature. In works \cite{Pa1}, \cite{Pa} (see also \cite{Os}) Parshin introduced an analogue of the Krichever map which associates to each geometric data (which include in that works a Cohen-Macaulay surface, an ample Cartier divisor, a smooth point
and a vector bundle) a pair of subspaces $(\mathbb{A},\mathbb{W})$ in the two-dimensional
local field associated with the flag (surface, divisor, point) $k((u))((t))$ (with analogous properties). He showed that this map is injective on such data. In works \cite{Pa}, \cite{Os} some combinatorial
construction was also given. This construction helps to calculate cohomology groups of vector bundles in
terms of these subspaces and permits to reconstruct the geometric data from the pair $(\mathbb{A},\mathbb{W})$. The difference of this new Krichever-Parshin map from the Krichever map is that the last map is known to be bijective.

To extend the Krichever-Parshin map and to make it bijective we introduced in the work \cite{Ku} new geometric objects called formal punctured ribbons (or simply ribbons for short) and torsion free coherent sheaves on them, we extended this map on the set of new geometric data which include these objects and showed the bijection between the set of geometric data and the set of pairs of subspaces $(\mathbb{A},\mathbb{W})$ (also called  generalized Schur pairs) satisfying certain combinatorial conditions. We also showed that for any given Parshin's geometric data one can construct a unique geometric data with a ribbon, and the initial Parshin's data can be reconstructed from the new data with help of the combinatorial construction mentioned above. In the work \cite{Ku3} we extended this construction to the modified Parshin's data from definition \ref{geomdata}. 

\subsubsection{Properties of the Krichever map in dimension one}
\label{svva}

First let's recall some properties of the classical Krichever map for torsion free sheaves of rank one (see \cite{Mu}, \cite{Mul} for details; we change slightly some notation from these papers here). 
In this case the geometric data is a quintet $(C,P,\cf , u, \phi )$, where $C$ is a projective curve, $P$ is a smooth point on $C$, $\cf$ is a torsion free sheaf of rank one, $u$ is a local parameter at the point $P$ (in particular, there is an isomorphism $\pi:\hat{\co}_{C,P}\simeq k[[u]]$), and $\phi :\hat{\cf}_P\simeq k[[u]]$ is a trivialisation. Obviously, any two such trivialisations differ by multiplication on an element $a\in k[[u]]^*$. The Schur pair $A,W\subset k((u))$ is constructed in  analogous way as in section \ref{data}: $A$ is the image of the group $H^0(C\backslash P,\co_C)$ in the space $k((u))$ (which is obtained using the trivialisation $\pi$), and $W$ is the image of the group $H^0(C\backslash P,\cf )$ in the space $k((u))$ (which is obtained using the trivialisation $\phi$). 
If we choose another trivialisation of the sheaf $\cf$ then the new space $W$ will differ from the old one  by multiplication on the element $a\in k[[u]]^*$ and the space $A$ will not change. 

In this case we also have the following  properties  in terms of spaces $A,W$: 
\begin{equation}
\label{kriv1}
\cf (nP)\simeq \Proj (\tilde{W}(n)), 
\end{equation}
where $\tilde{W}=\oplus_{n=0}^{\infty} W_n s^n$, $W_n=W\cap u^n\cdot k[[u]]$; 
\begin{equation}
\label{kriv2}
H^0(C,\cf )\simeq W\cap k[[u]], \mbox{\quad} H^1(C,\cf )\simeq \frac{k((u))}{W+k[[u]]}.
\end{equation}
Recall that all torsion free rank one sheaves with fixed Euler characteristic can be divided into the union of orbits of the group $\Pic^0(C)$. Namely (see \cite[Sec.6]{SW}) there are maximal torsion free sheaves, i.e. sheaves not isomorphic to a direct image of a torsion free sheaf on a (partial) normalisation of the curve $C$, and not maximal torsion free sheaves, i.e. sheaves isomorphic to direct images of torsion free sheaves on partial normalisations of $C$. If the sheaf is maximal then the action of the group $\Pic^0(C)$ is free on it.  Thus, every orbit of a torsion free sheaf of rank one is a torsor over $\Pic^0(C')$, where $C'$ is a partial normalisation of $C$. This torsor has a topology induced by the topology of $\Pic^0(C')$. 

Further we will need the following fact: if the Euler characteristic of a sheaf $\cf$ on a projective curve $C$ is $k\ge 0$, then there exists a dense open subset $V$ in the orbit of this sheaf such that for each $\cl\in V$ 
\begin{equation}
\label{kriv3}
H^1(C,\cl )=0, \mbox{\quad} h^0(C,\cl )=k, \mbox{\quad} H^1(C, \cl (-kP))=0, \mbox{\quad} H^0(C, \cl (-kP))=0
\end{equation}
for some point $P\in C$. 
This fact can be proved by induction on $k$ as follows. For any torsion free sheaf $\cf$ with Euler characteristic $k\ge 0$, any fixed smooth point $Q$ and $n\gg 0$ we have $H^1(C, \cf (nQ))=0$, $h^0(C,\cf (nQ))=k+n >0$. For any fixed  global section $a\in H^0(C,\cf (nQ))$ there is a dense open subset $U\subset C$ such that for any $P\in U$ the image of an embedding of $a$ in $\co_{C,P}$ (with help of any trivialisation) is invertible. Then from properties \eqref{kriv1} and \eqref{kriv2} it follows that 
$$
h^0(C,\cf (nQ-P))=n+k-1, \mbox{\quad} H^1(C, \cf (nQ-P))=0. 
$$
Thus, by induction there exists an open subset $U'\subset C$ such that for any $P\in U'$
$$
H^0(C,\cf (nQ-(n+k)P))=0, \mbox{\quad} H^1(C,\cf (nQ-(n+k)P))=0, 
$$
$$
\mbox{\quad} H^1(C,\cf (n(Q-P)))=0, \mbox{\quad} h^0(C,\cf (n(Q-P)))=k.
$$
The rest of the proof follows from \cite[Th.12.8]{Ha} for the morphism $f: Pic^0(C)\times C\rightarrow Pic^0(C)$ and the Poincare sheaf $\cp$. 

\subsubsection{Three properties of the pair $(\mathbb{A},\mathbb{W})$}

Now we would like to recall three properties of the pair $(\mathbb{A},\mathbb{W})$. First we recall (see, for example,~\cite{Ku}) that a
$k$-subspace $W$ in $k((u))$ is called a {\em Fredholm}
subspace if
$$
\dim_k W \cap k[[u]] < \infty  \qquad \mbox{and }\qquad
\dim_k \frac{k((u))}{W + k[[u]]} < \infty
\mbox{.}
$$

For a $k$-subspace $W$ in $k((u))((t))$, for  $n \in \dz$
let
$$
W(n) = \frac{W \cap t^nk((u))[[t]]}{W \cap
t^{n+1}k((u))[[t]]}
$$
be a $k$-subspace in $k((u)) =
\frac{t^nk((u))[[t]]}{t^{n+1}k((u))[[t]]}$.

\subsubsection{The first property}
 Let the pair $(\mathbb{A},\mathbb{W})$ be the image of the ribbon's data corresponding to some geometric data of rank one from definition \ref{geomdata} with $\cf$ being coherent of rank one (for details see \cite[Sec.3.5]{Ku3}). 
Recall (see definition \ref{zeta}, remark \ref{torsion_free_sheaves}) that for such rank 1 torsion free sheaves the map $\zeta$ \eqref{zeta} was defined. 
Then (see the proof of theorem 1 in \cite{Ku})
\begin{equation}
\label{a-property}
\da (nd)\simeq \mbox{the image of the quintet $(C,P,\co_C(nC'),u,id)$ in $k((u))$ under the Krichever map},
\end{equation}
where $C'=dC$ is the ample Cartier divisor as above (note that $\co_C(nC')\simeq \zeta (\co_X(nC'))$), and 
\begin{equation}
\label{w-property}
\dw (nd+k)\simeq \mbox{the image of the quintet $(C,P,\zeta (\cf_k(nC')),u,\phi )$  under the Krichever map},
\end{equation}
where $0\le k< d$ and $\phi$ is some trivialization of the sheaf $\zeta (\cf_k(nC'))$ at the point $P$ on $C$ (note that $\zeta (\cf_k(nC'))\simeq (\cf_k/\cf_{k-1})(nC')$). 
Thus, from one-dimensional KP theory (see \eqref{kriv2}) we have
$$
H^0(C,(\cf_k/\cf_{k-1})(nC'))\simeq \dw (nd+k)\cap k[[u]],  
$$
\begin{equation}
\label{1-KP}
H^1(C,(\cf_k/\cf_{k-1})(nC'))\simeq k((u))/(\dw (nd+k)+k[[u]])
\end{equation}

\subsubsection{The second property} Assume that the pair $\da ,\dw \in k((u))((t))$ comes from a geometric data of rank one. Then
\begin{equation}
\label{picture_cohomology0}
H^0(X,\co_X(nC'))\simeq  \da \cdot t^{nd} \cap k[[u]]((t)) \cap k((u))[[t]] \mbox{,}
\end{equation}
\begin{equation}
\label{picture_cohomology1}
H^1(X,\co_X(nC'))\simeq  \frac{\da \cdot t^{nd} \cap (k[[u]]((t)) + k((u))[[t]])} {\da \cdot t^{nd} \cap
k[[u]]((t)) + \da \cdot t^{nd} \cap k((u))[[t]]} \mbox{,}
\end{equation}
\begin{equation}
\label{picture_cohomology2}
H^2(X,\co_X(nC'))\simeq  \frac{k((u))((t))}{\da \cdot t^{nd} + k[[u]]((t)) + k((u))[[t]]} \mbox{.}
\end{equation}
For the proof see remark 3 and lemma 1 from \cite{Ku1} (remark 3 refers for the proof to papers \cite{Os,
Pa}, where $C$ was assumed to be a Cartier divisor; in general case it is not difficult to improve the proof from these papers; nevertheless, we will need this property in our paper only for such cases when $C$ is known to be Cartier). In particular, if $C$ is a Cartier divisor, it follows that 
\begin{equation}
\label{car-property}
\co_X(nC)\simeq \co_{X,n}, \mbox{\quad} \zeta (\co_X(nC))\simeq \co_C(nC)
\end{equation} 
for any $n$ (cf. remark \ref{torsion_free_sheaves}).

\subsubsection{The third property}: 
\begin{equation}
\label{intersection}
\mbox{If $A$ is Cohen-Macaulay ring then $A=\mathbb{A}\cap k[[u]]((t))$}, 
\end{equation}
where $A,W$ are the subspaces in $k[[u]]((t))$ constructed above starting from the geometric data. This claim was proved in \cite[Remark 3.4]{Ku3}. In the introduction to the loc.cit. the analogous property for the space $W$ was also announced (though imprecise): 
$$
W=\mathbb{W}\cap k[[u]]((t)). 
$$
We are going to clarify it here. 

\begin{prop}
\label{zero_cohomology}
Let $\cf$ be a coherent rank one torsion free sheaf on a projective surface $X$ defined over an uncountable algebraically closed field $k$. Assume that there is an ample irreducible $\dq$-Cartier divisor $C\subset X$ not contained in the singular locus such that $C^2=1$. Let $C'=dC$ be a very ample Cartier divisor. 
Suppose that the following conditions hold (see remark \ref{key}, definition \ref{zeta}): 
$$
\chi (X,\cf (nC'))=\frac{(nd+1)(nd+2)}{2}, \mbox{\quad} H^0(C,\zeta (\cf_k )(-(k+1)Q))=H^1(C, \zeta (\cf_k )(-(k+1)Q))=0
$$ 
for a smooth point $Q\in C$, $n\ge 0$, where $0\le k <d$. 
Then 

i) there exists a point $P\in C$ regular in $C$ and $X$ such that 
the conditions from item 6 of definition \ref{geomdata} hold for a trivialisation $\hat{\phi} :\hat{\cf}_P\simeq k[[u,t]]$, i.e. the homomorphisms
$$
H^0(X,\cf (nC'))\rightarrow k[[u,t]]/(u,t)^{nd+1}
$$
are isomorphisms for all $n\ge 0$;\footnote{For reader who prefer the alternative definition this item can be reformulated as follows. There exist $j:T\rightarrow X$ with $j(O)=P\in C\backslash (C^{sing}\cup X^{sing})$ and $j^{-1}(C)=T_1$ (i.e. $R=k[[u,t]]\simeq \hat{\co}_{X,P}$, $R/tR\simeq \hat{\co}_{C,P}$) such that for a choice of generator of $\cf_P$ as $\co_{X,P}$-module the embedding $\cf \hookrightarrow j_*\co_T$ (corresponding to $(j^*\cf )_O=R\otimes_{\co_{X,P}}\cf \simeq \hat{\cf}_P$) condition \ref{GD3} is satisfied, i.e. the maps $H^0(X,\cf (nC'))\rightarrow R\cdot t^{-nd}/\cm^{nd+1}\cdot t^{-nd}$ are isomorphisms. }

ii) for this trivialisation 
$$
W=\mathbb{W}\cap k[[u]]((t));
$$

iii) for this trivialisation 
\begin{equation}
\label{picture_cohomology1.1}
H^1(X,\cf )\simeq  \frac{\dw  \cap (k[[u]]((t)) + k((u))[[t]])} {\dw  \cap
k[[u]]((t)) + \dw  \cap k((u))[[t]]} = 0 \mbox{,}
\end{equation}
\begin{equation}
\label{picture_cohomology2.1}
H^2(X,\cf )\simeq  \frac{k((u))((t))}{\dw  + k[[u]]((t)) + k((u))[[t]]} =0 \mbox{;}
\end{equation}

iv) the sheaf $\cf$ is Cohen-Macaulay on $X$. 

\end{prop}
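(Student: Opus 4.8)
The plan is to reduce all four assertions to the one-dimensional Krichever theory on $C$, applied to the graded pieces $\zeta(\cf_k)=\cf_k/\cf_{k-1}$ of the filtration $\cdots\subset\cf_{-1}\subset\cf_0\subset\cf_1\subset\cdots$, and to transport the resulting data to $X$ through the adelic cohomology formulas recalled in the ``three properties'' above. First I would record the curve-level content of the hypotheses. Using the exact sequences $0\to\cf_{k-1}\to\cf_k\to\zeta(\cf_k)\to0$ (the last sheaf viewed on $X$), the periodicity $\zeta(\cf_{k+d})\simeq\zeta(\cf_k)(C')$ coming from $\cf_{k+d}\simeq\cf_k(C')$, and the hypothesis $\chi(X,\cf(nC'))=\binom{nd+2}{2}$, one gets $\chi(C,\zeta(\cf_k))=k+1$ for all $k\ge0$. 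This matches the hypothesis $H^0(C,\zeta(\cf_k)(-(k+1)Q))=H^1(C,\zeta(\cf_k)(-(k+1)Q))=0$, which, twisting back up by $(k+1)Q$, yields $H^1(C,\zeta(\cf_k))=0$ and $h^0(C,\zeta(\cf_k))=k+1$. Through \eqref{1-KP} these become statements about the graded pieces $\dw(m)$ of the adelic module.

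Next I would fix the point $P$ and settle part (i). Regularity of $P$ on $C$ and $X$, local freeness of $\cf$ at $P$, and the curve vanishings at $P$ each define a dense open subset of $C$ (the last is nonempty by hypothesis, hence dense by semicontinuity); since together with the jet conditions below these are countably many proper closed exclusions, the uncountability of $k$ produces a common good point $P$, at which in particular the curve vanishings hold. A generator of $\cf_P$ gives the trivialisation $\hat\phi$ and the spaces $W\subset\dw$; as $\cf$ is rank one, torsion free and locally free at $P$, remarks \ref{key}, \ref{torsion_free_sheaves} and lemma \ref{claim1} give $\cf\simeq\cf_0$ and $H^0(X,\cf(nC'))\simeq W_{nd}$. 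The jet map $H^0(X,\cf(nC'))\to k[[u,t]]/(u,t)^{nd+1}$ has target of dimension $\binom{nd+2}{2}=\chi(X,\cf(nC'))$, so (granting the vanishing $H^1(X,\cf(nC'))=H^2(X,\cf(nC'))=0$ of part (iii)) it is enough to prove injectivity. I would filter this map by order of vanishing along $C$; on the associated graded it becomes the direct sum of the one-dimensional Krichever jet maps of the $\zeta(\cf_m)$ at $P$, whose injectivity is exactly $H^0(C,\zeta(\cf_k)(-(k+1)P))=0$ (reduced to $0\le k<d$ by periodicity). Hence the graded, and so the original, jet map is injective, and the dimension count upgrades it to an isomorphism.

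For parts (ii) and (iii) I would realise $\cf$ as the sheaf of a genuine rank one ribbon datum with flag $(X,C,P)$ and apply Parshin's adelic formulas; since $C$ is only $\dq$-Cartier while $C'=dC$ is very ample Cartier, I would pass to $C'$ and use $\cf_{nd}\simeq\cf_0(nC')$, so that the formulas of \cite{Os,Pa,Ku1} apply. The expressions \eqref{picture_cohomology1.1}, \eqref{picture_cohomology2.1} for $H^1(X,\cf)$ and $H^2(X,\cf)$ through $\dw$ are then available, and a $t$-adic decomposition reduces their vanishing, graded piece by graded piece, to one-dimensional statements $H^1(C,\zeta(\cf_m))=0$ via \eqref{1-KP}: the pieces corresponding to large twists by $C'$ carry sheaves of large degree, where $H^1$ is automatically zero, and the finitely many remaining pieces are controlled by the hypotheses. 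Comparing this decomposition with the identification $H^0(X,\cf(nC'))\simeq W_{nd}$ and the $H^0$-formula \eqref{picture_cohomology0} gives part (ii), $W=\dw\cap k[[u]]((t))$, as the sheaf analogue of \eqref{intersection}. Finally, for part (iv), the realisation of $\cf$ as a rank one datum makes \cite[Cor.~3.1]{Ku3} applicable, giving Cohen-Macaulayness along $C$; combined with $W=\dw\cap k[[u]]((t))$, the purity of the restrictions $\cf|_{mC'}$ (remark \ref{torsion_free_sheaves}) and the vanishing $H^1(X,\cf(nC'))=0$, this forces depth two at every point, i.e. the Cohen-Macaulay property of $\cf$ on all of $X$.

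I expect two main obstacles. The first is the apparent circularity between part (i) (which uses the vanishing of part (iii)) and part (iii) itself; this is resolved by observing that the vanishing argument needs only regularity of $P$ and the curve vanishing at $P$, both arranged before the further generic jet conditions are imposed, and that $H^i(X,\cf)$ is intrinsic and independent of $P$. The second, and genuinely delicate, obstacle is the bookkeeping in the $t$-adic reduction of the two-dimensional adelic cohomology to the one-dimensional pieces: one must correctly match the grading index $m$ of $\dw(m)$ with the twist of the curve sheaf $\zeta(\cf_k)$, verify that precisely the pieces requiring $H^1$-vanishing are either of high degree or among the finitely many covered by the hypotheses, and check that Parshin's formulas, originally proved for Cartier $C$, survive the passage to the $\dq$-Cartier situation via $C'$.
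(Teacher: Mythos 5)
Your overall strategy --- choosing $P$ by intersecting countably many dense open subsets of $C$, reducing everything to one-dimensional Krichever theory on the graded pieces $\zeta(\cf_k)$, and transporting the result back through the adelic formulas --- is the same as the paper's. However, your resolution of the circularity between parts (i) and (iii) does not work as stated, and this is a genuine gap. To run your dimension count at \emph{every} $n\ge 0$ you need $h^0(X,\cf(nC'))=\chi(X,\cf(nC'))$, i.e.\ $H^1(X,\cf(nC'))=H^2(X,\cf(nC'))=0$ for all $n\ge 0$, and you claim this ``needs only regularity of $P$ and the curve vanishing at $P$''. It does not: the only available route to $H^1(X,\cf_k(nC'))=0$ for small $n$ is the long exact sequence of $0\to\cf_k\to\cf_{k+1}\to\zeta(\cf_{k+1})\to 0$, and the step that lets you import the vanishing from $n\gg 0$ down to small $n$ is the \emph{injectivity} of $H^1(X,\cf_k(nC'))\to H^1(X,\cf_{k+1}(nC'))$, which in turn requires the surjectivity of $H^0(X,\cf_{k+1}(nC'))\to H^0(C,\zeta(\cf_{k+1})\otimes\co_X(nC')|_C)$ --- and that surjectivity is exactly the statement \eqref{5*} that part (i) is supposed to produce. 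The paper breaks the circle differently: it performs the dimension count only for $n\gg 0$, where $h^0=\chi$ is free by Serre vanishing, obtaining $\dim_k W_{nd}=\sum h^0(C,\zeta(\cf_k)\otimes\co_X(mC')|_C)$; since each graded piece $W_{md+k}/W_{md+k-1}$ is individually bounded above by the corresponding $h^0(C,\cdot)=md+k+1$, equality of the totals forces equality of every graded piece, i.e.\ \eqref{5*} for all $m$ and $k$, and only afterwards do the cohomology vanishings of part (iii) follow from the long exact sequences. You need to restructure part (i) this way, or supply an independent proof of $H^1(X,\cf(nC'))=H^2(X,\cf(nC'))=0$ for small $n$.

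The second gap is part (iv). ``Combined with $W=\dw\cap k[[u]]((t))$, the purity of the restrictions and the vanishing of $H^1$, this forces depth two at every point'' is an assertion, not an argument: Cohen--Macaulayness along $C$ together with cohomology vanishing says nothing directly about the depth of $\cf$ at points of $X\setminus C$ where it might fail to be Cohen--Macaulay. The paper's proof compares $\cf$ with its Macaulayfication $CM(\cf)$: since $CM(\cf)_P=\cf_P$ and $CM(\cf)|_{mC'}=\cf|_{mC'}$, Lemma \ref{claim1} applies to $W'=\chi_1(H^0(X\setminus C,CM(\cf)))$; the associated ribbon space satisfies $\dw'=\dw$ because its construction only sees sections along $C$; hence $W\subseteq W'\subseteq\dw\cap k[[u]]((t))=W$ by part (ii), so $W'=W$ and $\cf\simeq CM(\cf)$. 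Some version of this comparison is unavoidable --- it is the place where part (ii) is actually \emph{used} --- and your sketch omits it entirely.
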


\begin{proof}
i) For any sheaf $\cf_k$ and $m>0$ we have the short exact sequences 
$$
0\rightarrow \zeta (\cf_k) \rightarrow \zeta (\cf_k)\otimes_{\co_C}\co_X(mC')|_C\rightarrow \zeta (\cf_k) \otimes_{\co_C}(\co_X(mC')|_C/\co_C)\rightarrow 0,
$$
since $\co_X(mC')|_C$ is an invertible sheaf. Hence we have $H^1(C,\zeta(\cf_k)\otimes_{\co_C}\co_X(mC')|_C)=0$ for all $m\ge 0$. Since $C^2=1$ (i.e. $\deg (\co_X(C')|_C)=d$), by the asymptotic Riemann-Roch theorem we have 
\begin{equation}
\label{***}
\chi (\zeta (\cf_k)\otimes_{\co_C}\co_X(mC')|_C)=h^0(C,\zeta (\cf_k)\otimes_{\co_C}\co_X(mC')|_C)=md+k+1.
\end{equation}
For each $m\ge 0$ by the property \eqref{kriv3} there is an open subset $U_m$ in $C$ such that 
\begin{multline}
\label{****}
H^0(C,\zeta (\cf_k)\otimes_{\co_C}\co_X(mC')|_C\otimes_{\co_C}\co_C(-(md+k+1)P))= \\
H^1(C,\zeta (\cf_k)\otimes_{\co_C}\co_X(mC')|_C\otimes_{\co_C}\co_C(-(md+k+1)P))=0 
\end{multline}
for any $P\in U_m$. Therefore, since the ground field is uncountable, there exists a point $P\in  \cap_{m=0}^{\infty}U_m$ regular in $C$ and $X$ such that these properties hold simultaneously for all $m\ge 0$ and $0\le k<d$. 
Since for any $n\ge 0$ by lemma \ref{claim1} we have 
$$
W_{nd+k}/W_{nd+k-1}\simeq H^0(X,\cf_k(nC'))/H^0(X,\cf_{k-1}(nC')) \hookrightarrow H^0(C,\zeta (\cf_k)\otimes_{\co_C}\co_X(nC')|_C),
$$
and $\chi_1(H^0(X, \cf (nC')))\subset W_{nd}$ by definition, we have that for all $n\gg 0$ 
\begin{multline}
h^0(X, \cf (nC'))=\chi (\cf (nC'))=\frac{(nd+1)(nd+2)}{2} \le \dim_k(W_{nd}) \le \\
\sum_{k=0}^{d-1}\sum_{m=0}^{n-1} h^0(C,\zeta (\cf_k)\otimes_{\co_C}\co_X(mC')|_C)+h^0(C,\zeta (\cf_k)\otimes_{\co_C}\co_X(nC')|_C).
\end{multline}
By \eqref{***} these inequalities are equalities. Therefore, $H^0(X, \cf (nC'))\simeq W_{nd}$ for any $n\gg 0$. Hence  $\cf \simeq  \cf_0$  and 
\begin{equation}
\label{5*}
H^0(X, \cf (nC'))\simeq W_{nd}, \mbox{\quad } W_{nd+k}/W_{nd+k-1} \simeq H^0(C,\zeta (\cf_k)\otimes_{\co_C}\co_X(nC')|_C)
\end{equation}
for all $n\ge 0$ by remark \ref{torsion_free_sheaves} and by lemma \ref{claim1}. 
Together with \eqref{****} this implies that the conditions from item 6 of definition \ref{geomdata} hold for some trivialisation at the point $P$. 

ii) By \eqref{kriv2} and \eqref{w-property} we have 
$$
H^0(C,\zeta (\cf_k)\otimes_{\co_C}\co_X(nC')|_C)\simeq \dw (nd+k)\cap k[[u]].
$$
From this and from \eqref{5*} follows that $W=\dw \cap k[[u]]((t))$. 

iii) By assumption on the Euler characteristic of the sheaf $\cf$ and from \eqref{5*} it follows $h^1(X,\cf )-h^2(X,\cf )=0$. From \eqref{****} we know that $h^1(C,\zeta (\cf_k)\otimes_{\co_C}\co_X(nC')|_C)=0$ for any $0\le k<d$ and $n\ge 0$. Recall that we have exact sequences
$$
0\rightarrow \cf_k\rightarrow \cf_{k+1}\rightarrow \zeta (\cf_{k+1})\rightarrow 0
$$
for any $0\le k$. So, from the induced long exact cohomological sequences and from \eqref{5*} we obtain 
$H^1(X,\cf_k)\simeq H^1(X,\cf_{k+1})$ for any $k\ge 0$. Thus, all these groups are zero, since $H^1(X, \cf_{k+nd})=H^1(X, \cf_k(nC'))=0$ for all $n\gg 0$. Hence $H^2(X, \cf )=0$. From item ii) we get
$$
\dw \cap (k[[u]]((t))+k((u))[[t]])\subset \dw \cap k[[u]]((t))+\dw \cap k((u))[[t]],
$$
where from 
$$
\frac{\dw  \cap (k[[u]]((t)) + k((u))[[t]])} {\dw  \cap
k[[u]]((t)) + \dw  \cap k((u))[[t]]} = 0.
$$
By \eqref{kriv2} and \eqref{w-property} we have 
$$
0=H^1(C,\zeta (\cf_k)\otimes_{\co_C}\co_X(nC')|_C)\simeq \frac{k((u))}{\dw +k[[u]]}
$$
for all $k\ge 0$. 
Hence
$$
\frac{k((u))((t))}{\dw  + k[[u]]((t)) + k((u))[[t]]} =0.
$$

iv) By \cite[Cor.3.1]{Ku3} the sheaf $\cf$ is Cohen-Macaulay along $C$. The same arguments show that the sheaves $\cf_k$ are Cohen-Macaulay along $C$. 
Consider the Macaulaysation $CM(\cf )$ of the sheaf $\cf$ (see \cite[Appendix B]{Ku3}). 
Consider the image $W'=\chi_1(H^0(X\backslash C,CM(\cf )))$ in $k[[u]]((t))$, where we use the same embedding $\phi$ of the sheaf $\cf$ to define $\chi_1$ (cf. section \ref{data}). 
Let's note that $CM(\cf )|_{mC'}=\cf |_{mC'}$ is pure of dimension one for any $m>0$, since $\cf$ is Cohen-Macaulay along $C$. Then by lemma \ref{claim1} we have $H^0(X, CM(\cf )(nC'))\simeq W'_{nd}$ for all $n\ge 0$.  

Directly from definition of a Cohen-Macaulay sheaf follows that the sheaves $CM(\cf )_k$ are Cohen-Macaulay for all $k$. Note that $CM(\cf_k)\simeq CM(\cf )_k$. Indeed, by definition of Cohen-Macaulaysation we have $CM(\cf_k) \subset CM(\cf )_k$. If $CM(\cf_k) \not\simeq CM(\cf )_k$, then this would mean 
$CM(\cf_k)_{-k}\not\simeq (CM(\cf )_k)_{-k}\simeq CM(\cf )$. But $CM(\cf_k)_{-k}\simeq CM(\cf )$, since $CM(\cf_k)_{-k}\subset (CM(\cf )_k)_{-k}\simeq CM(\cf )$ and $CM(\cf_k)_{-k}$ is a Cohen-Macaulay sheaf containing $\cf$ (cf. \cite[Rem.B.2]{Ku3}). 

In particular, we can apply the construction from \cite[Sec.3.5]{Ku3} and construct a torsion free sheaf $\cn$ on the ribbon $(C,\ca )$ (the ribbon constructed by our geometric data). Then we can construct a space $\dw'\subset k((u))((t))$ by the sheaf $\cn$. Since the construction depends only on sections of the sheaves $CM(\cf_k)$ along the curve $C$, we get $\dw'=\dw$. Hence from item ii) we obtain $\cf\simeq CM(\cf )$. 

\end{proof}

\begin{nt}
\label{moduli_space}
Torsion free sheaves of rank one on the projective surface $X$ with fixed Hilbert polynomial  $\chi$ from proposition \ref{zero_cohomology} are stable in the sense of standard definition from \cite[Ch.2]{HL}. Stable sheaves are parametrized by a projective scheme $\cm_X(\chi )$ (see Chapter 4 in loc.cit.). 

On the other hand, all sheaves we are interested in satisfy the assumptions of lemma \ref{claim2} (and in view of theorems \ref{CMmodules} and \ref{trivial} even more strong assumption: they are Cohen-Macaulay on $X$). By \cite[Prop.1.2.16]{BH} the Cohen-Macaulayness is an open condition.  
So, it is reasonable to consider an open subscheme $\cm_X^1$  of the moduli space $\cm_X(\chi )$ parametrising such sheaves.
Then the map $\zeta$ from section \ref{mapzeta} induces the morphism 
$$
\zeta :\cm_X^1\rightarrow \cm_C(g),
$$
where $\cm_C(g)$ is the moduli space of rank one torsion free sheaves of degree $g=p_a(C)$ on $C$ (cf. \cite{Rego}). We conjecture that this morphism is surjective (cf. examples at the end of this paper).
\end{nt}

\section{Theorems}

\subsection{Cohen-Macaulay property for PDOs}

Recall that by \cite[Th.2.1]{Ku3} any commutative ring $B\subset D$ of PDOs satisfying the property \eqref{property} leads to a datum $(X,C,\cl )$, where $X,C$ are the same as in definition \ref{geomdata} and $\cl$ is a torsion free coherent sheaf on $X$. Let's assume (see the discussion before remark \ref{nt}) that $B$ is a 1-quasi-elliptic strongly admissible ring satisfying the extra property \eqref{extraproperty}. 

In this case by \cite[Prop. 3.3]{Ku3} the datum $(X,C,\cl )$ is isomorphic to the triple $(X,C,\cf )$ (a part of geometric data) from theorem \ref{dannye2}. If $B$ is of rank one, then by \cite[Th. 2.1]{Ku3} we have $C^2=1$, and by \cite[Prop. 3.2]{Ku3} the sheaf $\cf$ and the geometric data from theorem \ref{dannye2} are of rank one. 

\begin{theo}
\label{CMmodules}
Let $(X,C,P,\cf ,\pi ,\phi )$ be a geometric datum  corresponding to a  1-quasi-elliptic strongly admissible ring $B\subset D$ of commuting operators satisfying the properties \eqref{property}, \eqref{extraproperty}. 

Then $\cf$ is a Cohen-Macaulay sheaf on $X$. 
\end{theo}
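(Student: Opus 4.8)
The plan is to reduce the Cohen-Macaulayness of $\cf$ on the whole surface $X$ to the properties we already control along $C$ and at the point $P$, and then to exploit the cohomological characterisation assembled in Proposition \ref{zero_cohomology}. Recall that by \cite[Cor.3.1]{Ku3} the sheaf $\cf$ is already Cohen-Macaulay \emph{along} $C$, so the only possible failure of the CM property is at finitely many closed points of the affine part $X\backslash C=\Spec(A)$. The central idea is to compare $\cf$ with its Macaulayfication $CM(\cf)$ (see \cite[Appendix B]{Ku3}): since $\cf$ is torsion free of rank one and already CM along $C$, we have a canonical embedding $\cf\hookrightarrow CM(\cf)$ which is an isomorphism in a neighbourhood of $C$ and whose cokernel is supported on finitely many points of $X\backslash C$. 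Proving $\cf$ is CM on $X$ amounts to showing this embedding is an isomorphism.

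\textbf{Translating into Schur-pair language.}
First I would pass to the associated Schur pair $(A,W)$ with $A,W\subset k[[u]]((t))$, where $A=\chi_1(H^0(X\backslash C,\co_X))$ and $W=\chi_1(H^0(X\backslash C,\cf))$, and to the ribbon-theoretic completions $(\mathbb A,\mathbb W)\subset k((u))((t))$. The key structural input is property \eqref{intersection}: for the ring $A$ arising here one has $A=\mathbb A\cap k[[u]]((t))$ once $A$ is Cohen-Macaulay. Since $B\subset D$ is a ring of PDOs, the affine surface $X\backslash C=\Spec(A)$ is known to be Cohen-Macaulay (this is exactly the CM-property of the spectral surface established for PDO rings in \cite{Ku3}), so \eqref{intersection} applies to $A$. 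The heart of the argument is then to prove the analogous statement for $W$, namely
$$
W=\mathbb W\cap k[[u]]((t)),
$$
because $CM(\cf)$ is by construction the sheaf whose sections correspond to $\mathbb W\cap k[[u]]((t))$ (its sections along $C$ agree with those of $\cf$, which forces $\mathbb W'=\mathbb W$ in the notation of part iv of Proposition \ref{zero_cohomology}), while $\cf$ corresponds to $W$ itself. An equality of the two spaces yields $\cf\simeq CM(\cf)$, hence the Cohen-Macaulayness of $\cf$.

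\textbf{Establishing the $W$-identity.}
To obtain $W=\mathbb W\cap k[[u]]((t))$ I would run exactly the mechanism of Proposition \ref{zero_cohomology}(iv): form the Macaulayfication $CM(\cf)$, use that $CM(\cf)_k\simeq CM(\cf_k)$ (the compatibility of Macaulayfication with the grading shifts, proved there via \cite[Rem.B.2]{Ku3}), and observe that the space $\mathbb W'$ built from the ribbon sheaf $\cn$ attached to $CM(\cf)$ depends only on the restrictions $CM(\cf_k)|_C$, which coincide with $\cf_k|_C$ because $\cf$ is already CM along $C$; this gives $\mathbb W'=\mathbb W$. Then lemma \ref{claim1} (whose hypothesis on the $\co_P$-module embedding is satisfied here by remark \ref{rem2.7}, since $\cf$ comes from a datum of rank equal to the rank of the sheaf) yields $H^0(X,CM(\cf)(nC'))\simeq W'_{nd}$ and $H^0(X,\cf(nC'))\simeq W_{nd}$ for all $n\ge 0$, so the equality $W=W'=\mathbb W\cap k[[u]]((t))$ forces $\cf\simeq CM(\cf)$.

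\textbf{Where the difficulty lies.}
The main obstacle is that Proposition \ref{zero_cohomology} is stated for sheaves of rank one with $C^2=1$ over an uncountable algebraically closed field, whereas the theorem asserts Cohen-Macaulayness for the sheaf $\cf$ attached to an \emph{arbitrary} strongly admissible PDO ring $B$, whose rank need not be one. Thus the real work is to handle the higher-rank case, where $\pi$ need not be an isomorphism and $\cf$ is only locally free of the appropriate rank at $P$. For this I expect to argue on each graded piece $\cf_k/\cf_{k-1}$ separately, using that these are torsion free on $C$ (established in section \ref{mapzeta}) together with the vanishing statements of the one-dimensional Krichever theory \eqref{kriv2}, \eqref{1-KP}; the delicate point is verifying that the cohomological bookkeeping of Proposition \ref{zero_cohomology}, in particular the inductive identification $H^1(X,\cf_k)\simeq H^1(X,\cf_{k+1})$ and the consequent vanishing of $H^2$, survives without the rank-one and $C^2=1$ hypotheses. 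Once that bookkeeping is in place, the comparison $\cf\simeq CM(\cf)$ goes through verbatim.
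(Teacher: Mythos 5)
Your opening moves match the paper's: reduce to showing that the canonical embedding $\cf\hookrightarrow CM(\cf )$ is an isomorphism, using that $\cf$ is already Cohen--Macaulay along $C$ by \cite[Cor.3.1]{Ku3}. But the decisive step is where your argument has a genuine gap. You propose to obtain $\cf\simeq CM(\cf )$ by establishing $W=\mathbb W\cap k[[u]]((t))$ ``by running exactly the mechanism of Proposition \ref{zero_cohomology}(iv)''. That mechanism does not produce the identity $W=\mathbb W\cap k[[u]]((t))$; it \emph{consumes} it (it is item ii) of that proposition), and item ii) is proved there only under the extra hypotheses of Proposition \ref{zero_cohomology} --- the vanishing $H^0(C,\zeta (\cf_k)(-(k+1)Q))=H^1(C,\zeta (\cf_k)(-(k+1)Q))=0$ for a suitably chosen point $Q$, over an uncountable algebraically closed field. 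None of these is available for the datum attached to an arbitrary strongly admissible PDO ring, and you do not verify them; your assertion that ``$CM(\cf )$ is by construction the sheaf whose sections correspond to $\mathbb W\cap k[[u]]((t))$'' is precisely what would have to be proved. A symptom of the problem is that your argument never uses the hypothesis that $B$ consists of genuine partial differential operators satisfying \eqref{property} and \eqref{extraproperty}; as written it would apply to any geometric datum, which cannot be right.

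The paper closes exactly this gap by a different and essentially algebraic argument. Setting $W'=\chi_1(H^0(X\backslash C, CM(\cf )))$, one first checks (via Lemma \ref{claim1} and purity of $\cf |_{mC'}$) that $W'_{nd}\simeq H^0(X,CM(\cf )(nC'))$ and hence that $W'=\langle W,w_1,\dots ,w_k\rangle$ is a \emph{finite-dimensional} extension of $W$. Then, conjugating by the Sato operator $S$ with $\psi_1^{-1}(W)=W_0S$, the space $\psi_1^{-1}(W')S^{-1}$ is a $B$-module containing $W_0=k[z_1^{-1},z_2^{-1}]$; acting by high powers of the operators $P,Q$ of \eqref{extraproperty}, whose principal symbols are constant and whose characteristic divisors do not meet, forces each extra generator $\tilde w_i$ to contribute homogeneous top parts $w_i'$ with $w_i'\sigma (P)^n, w_i'\sigma (Q)^n\in W_0$, which is impossible unless $w_i'=0$. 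This is where the PDO hypothesis enters irreplaceably, and it is the ingredient your proposal is missing. (A smaller inaccuracy: you assert that $X\backslash C$ is Cohen--Macaulay for any PDO ring, whereas the paper claims this only for maximal $B$, cf.\ Remark \ref{prosto}; the paper's proof of Theorem \ref{CMmodules} does not rely on CM-ness of $X$.)
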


\begin{nt}
\label{prosto}
If the ring $B$ is maximal then  by \cite[Th.4.1]{Zhe2} the surface $X$ is also Cohen-Macaulay. 
\end{nt}

\begin{proof}
By \cite[Prop.3.2]{Ku3} the sheaf $\cf \simeq \cl$ is coherent. By \cite[Th.2.1, Prop. 3.3]{Ku3} and remark \ref{Kur} the rank of the geometric datum is one. 

Consider the Macaulaysation $CM(\cf )$ of the sheaf $\cf$ (see \cite[Appendix B]{Ku3}). By \cite[Cor. 3.1]{Ku3} the sheaf $\cf$ is Cohen-Macaulay along $C$; in particular, $\cf_P\simeq CM(\cf )_P$.  
Consider the image $W'=\chi_1(H^0(X\backslash C,CM(\cf )))$ in $k[[u]]((t))$, where we use the embedding $\phi$ of the sheaf $\cf$ to define $\chi_1$ (cf. section \ref{data}). Then we claim that this image is a finitely generated linear space over $W=\chi_1(H^0(X\backslash C, \cf ))$:
$$
W'=\langle W, w_1,\ldots ,w_k\rangle ,
$$
where $w_1,\ldots ,w_k\notin W$, $w_1,\ldots ,w_k\in k[[u]]((t))$. 

To prove the claim, first of all let's note that the sheaf $CM(\cf )|_{mC'}=\cf |_{mC'}$ is pure of dimension one for any $m>0$ (see remark  \ref{torsion_free_sheaves} for notation), since $\cf$ is Cohen-Macaulay along $C$. 

Let's show that the condition on the space $W'$ from lemma \ref{claim1} is satisfied. This condition is true for elements $w$ from the space $W$ corresponding to the sheaf $\cf$, because $W_{nd}\simeq H^0(X,\cf (nC'))$. It is also clear that for any element $w$ from $W'$ we have $w\in f^{-md}\cf_P$ for some $m$. Now take any element $w\in W_{nd}'$ Then we have for all sufficiently big $m>0$: 
$$
f^{-md}w-c_1w_1-\ldots -c_kw_k=a\in W_{(n+m)d}
$$
for some $c_1,\ldots ,c_k\in k$. Thus, $f^{nd}w= f^{(n+m)d}a+f^{(n+m)d}(c_1w_1+\ldots +c_kw_k)\in \cf_P$.

Now by lemma \ref{claim1} we have $H^0(X, CM(\cf )(nC'))\simeq W'_{nd}$ for all $n\ge 0$. 

As a corollary we get that for $n>0$ big enough 
$$
W'_{nd}/W'_{(n-1)d}\simeq H^0(C, CM(\cf )(nC')|_{C'})=H^0(C, \cf (nC')|_{C'})\simeq W_{nd}/W_{(n-1)d}.
$$
Obviously, $W'_{nd}\supset W_{nd}$ for all $n$. So, our claim is proved. 

By \cite[Th.3.3, Th.3.1]{Zhe2} there is a unique operator $S$ satisfying condition $A_1$ such that $\psi_1^{-1}(W)=W_0S$ (the map $\psi_1$ is defined in \eqref{psi_1}), where $W_0=k[z_1^{-1},z_2^{-1}]$. Moreover, $B=S\psi_1^{-1}(A)S^{-1}$, where $A=\chi_1(H^0(X\backslash C, \co_X))$. Since $W'\cdot A\subset W'$, we have 
$$
(\psi_1^{-1}(W')S^{-1}) \cdot B \subset (\psi_1^{-1}(W')S^{-1}), \mbox{\quad } \psi_1^{-1}(W')S^{-1} =\langle W_0, \tilde{w}_1, \ldots ,\tilde{w}_k\rangle ,
$$ 
where $\tilde{w}_i=\psi_1^{-1}(w_i)S^{-1}$. Each series $\tilde{w}_i$ can be written in the following way:
$$
\tilde{w}_i=w'_i+w''_i, \mbox{\quad } w'_i=\sum_{k\ge 0, l>0, k+l=q_i} c_{kl}z_1^{-k}z_2^{l},  \mbox{\quad } w''_i=\sum_{k\ge 0, l>0, k+l<q_i} b_{kl}z_1^{-k}z_2^{l}
$$
To the end of this proof we will call $q_i$ the order of $w'_i$: $\ord (w'_i)=q_i$. Since the ring $B$ satisfies the property \eqref{extraproperty}, for any $n> 0$ the symbols of the operators $P^n$, $Q^n$ satisfy the same property \eqref{extraproperty} with $k,l$ replaced by $kn,ln$. For all $n\gg 0$ and for any $\tilde{w}_i$ we must have 
$$
\tilde{w}_i P^n \in (\psi_1^{-1}(W')S^{-1}), \mbox{\quad} \tilde{w}_i Q^n \in (\psi_1^{-1}(W')S^{-1}). 
$$
Direct calculations show that these elements can be represented as 
$$
\tilde{w}_i P^n = w'_i\sigma (P)^n + \mbox{"lower order terms"}, \mbox{\quad} \tilde{w}_i Q^n = w'_i\sigma (Q)^n + \mbox{"lower order terms"}.
$$
Hence, since $n\gg 0$, we must have $w'_i\sigma (P)^n, w'_i\sigma (Q)^n \in W_0$. Therefore, $w'_i\sigma (P)^n, w'_i\sigma (Q)^n$ must be homogeneous polynomials of orders $q_i+nk$, $q_i+n(l+1)$. Since the characteristic schemes of $P$ and $Q$ have no intersection, this mean that $w'_i$ must be a homogeneous polynomial of order $q_i$. But then since $w'_i \notin W_0$ and due to the property \eqref{extraproperty} the polynomials $w'_i\sigma (P)^n, w'_i\sigma (Q)^n$ will contain a nonzero monomial of type $cz_1^{-a}z_2^{b}\notin W_0$ for $b>0$, a contradiction. Thus, all $\tilde{w}_i$ must be zero, and $W'=W$, where from $CM(\cf )=\cf$. 
\end{proof}

\subsection{Geometric properties of rational commutative algebras of PDOs}

As we have mentioned in Introduction, there are examples  of algebraically integrable commutative rings of PDOs whose affine spectral surface satisfy the following property: its normalisation is $\da^2$. The following theorem clarifies what is the normalisation of the projective spectral surface $X$.  

\begin{theo}
\label{completion_of_plane}
Let $X$ be a projective surface, $C\subset X$ --- an integral Weil divisor not contained in the singular locus of $X$ which is also an ample $\dq$-Cartier divisor and $C^2=1$. Assume that $X\backslash C\simeq \da^2$. 

Then $X\simeq \dpp^2$, $C\simeq \dpp^1$.
\end{theo}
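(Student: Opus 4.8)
The plan is to realise the pair $(X,C)$ together with the structure sheaf $\co_X$ as a rank one geometric datum in the sense of Definition \ref{geomdataalt}, and then to invoke Proposition \ref{Kurke}. Since $X\backslash C\simeq \da^2$, the ring $A=H^0(X\backslash C,\co_X)$ is the polynomial ring $k[x,y]$; in particular $\co_X$ is precisely the sheaf attached to the filtered algebra $A=W$ itself, so if we can show that $(X,j,\co_X)$ satisfies conditions \eqref{GD1}--\eqref{GD3} for a suitable morphism $j:T\rightarrow X$, then the embedding $\co_{X,P}\to (\co_X)_P$ is the identity, and Proposition \ref{Kurke} yields at once $X\simeq \dpp^2$ with $C$ a line, whence $C\simeq \dpp^1$.

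First I would fix the point and the local data. The singular locus of $X$ is contained in $C$ and is a proper closed subset of it, so $C$ has points regular on both $C$ and $X$; pick such a point $P$, an isomorphism $\pi:\widehat{\co}_{X,P}\simeq k[[u,t]]$ with $C$ cut out by $t=0$, and take $\cf=\co_X$ with the tautological trivialisation, so that $r=1$. Conditions \eqref{GD1} and \eqref{GD2} are then immediate, and the whole content is \eqref{GD3}: with $C'=dC$ very ample, the maps $H^0(X,\co_X(nC'))\to k[[u,t]]/(u,t)^{nd+1}$ must be isomorphisms for all $n$. To analyse this I would describe the filtration through the divisorial valuation $w=-\ord_C$ on $A=k[x,y]$: a non-constant polynomial has its only pole along $C$, so $w$ takes values in $\dz_{\ge 0}$, vanishes exactly on $k^*$, and $A_n=H^0(X,\co_X(nC'))=\{f:\ w(f)\le nd\}$. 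Injectivity of the evaluation map amounts to the statement that a section of $\co_X(nC')$ cannot vanish to order $nd+1$ at a general point; this holds for a point $P$ lying in the intersection of countably many dense opens, exactly as in the proof of Proposition \ref{zero_cohomology}(i), where the uncountability of $k$ is used. It then remains to prove the dimension equality $\dim_k A_n=\tfrac{(nd+1)(nd+2)}{2}$, equivalently that $w$ is, in suitable coordinates, the total degree valuation and $A_n$ is the space of polynomials of degree $\le nd$.

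This dimension count is the main obstacle. I would attack it through the restriction map $\zeta$ and one dimensional KP theory: the successive quotients $A_n/A_{n-1}$, and more finely the one step quotients $\cf_k/\cf_{k-1}$ of Section \ref{mapzeta}, embed into spaces of sections of torsion free rank one sheaves $\zeta(\cf_k)$ on $C$ whose degrees are governed by $C\cdot C'=d$ and $C^2=1$, and \eqref{kriv2} controls their cohomology. The key point is that $C^2=1$ together with the ampleness of $C$ should force the arithmetic genus $p_a(C)=0$ (equivalently $C\cdot K_X=-3$ by adjunction) and the vanishing $H^1(X,\co_X(nC'))=H^2(X,\co_X(nC'))=0$ for all $n\ge 0$. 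Granting these, the Euler characteristic hypothesis $\chi(X,\co_X(nC'))=\tfrac{(nd+1)(nd+2)}{2}$ and the vanishing hypotheses of Proposition \ref{zero_cohomology} both hold for $\cf=\co_X$, and that proposition produces the point $P$ for which \eqref{GD3} is satisfied.

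I expect the genus vanishing to be the delicate step: it is exactly here that the ribbon and Krichever--Parshin cohomology formulas \eqref{picture_cohomology1}--\eqref{picture_cohomology2}, applied to the very explicit subspace $A=k[x,y]\subset k[[u]]((t))$, must be combined with $C^2=1$ to rule out a compactification by a boundary curve of positive genus, i.e. to show that the pole order filtration of $k[x,y]$ coincides with the degree filtration. Once \eqref{GD3} is established, Proposition \ref{Kurke} applied with $\cf=\co_X$ gives $X\simeq \dpp^2$ and $C\simeq \dpp^1$, completing the argument.
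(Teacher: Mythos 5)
Your overall strategy is sound in outline and, at the endgame, coincides with the paper's: once one knows that $A=H^0(X\backslash C,\co_X)\simeq k[x,y]$ carries the total\-/degree filtration (equivalently, that condition \eqref{GD3} holds for $\cf=\co_X$), the $\Proj$ computation in the proof of Proposition \ref{Kurke} gives $X\simeq\dpp^2$ and $C$ a line. You have also correctly located the crux: everything reduces to the dimension identity $\dim_k A_{nd}=\tfrac{(nd+1)(nd+2)}{2}$, i.e.\ to showing that the pole\-/order filtration of $k[x,y]$ along $C$ is the degree filtration. But this step is exactly where your proposal stops: you call it ``the main obstacle'' and ``the delicate step'' and do not supply an argument. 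Worse, the tool you propose for it is circular. Proposition \ref{zero_cohomology} takes as a \emph{hypothesis} that $\chi(X,\cf(nC'))=\tfrac{(nd+1)(nd+2)}{2}$ for all $n$; for $\cf=\co_X$ this already encodes $\chi(\co_X)=1$ and (via the linear coefficient) $C\cdot K_X=-3$, i.e.\ $p_a(C)=0$ and the cohomology vanishing you are trying to establish. Asymptotic Riemann--Roch and $C^2=1$ only control the leading term $d^2n^2/2$; \emph{a priori} $X$ could be a compactification of $\da^2$ by a boundary curve of positive arithmetic genus, and ruling this out is the actual content of the theorem. A secondary defect: your choice of $P$ by intersecting countably many dense opens imports uncountability (and algebraic closedness) of $k$, which the statement does not assume.

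The paper closes the gap by a different and more elementary mechanism, which does not require knowing $p_a(C)$ or any cohomology vanishing in advance. One fixes \emph{any} regular point $P$, expands $A\subset k[[u]]((t))$, and proves the key claim that an element $a\in A_{nd}$ with $\nu(a)=(*,nd)$ has $*\le nd$: this follows from a Fredholm/Euler\-/characteristic comparison for the spaces $\da(nd)$ of the associated ribbon (here only $\chi(\da(nd))=nd+\mathrm{const}$ is needed, which follows from $C^2=1$ with the genus absorbed into the constant, via \eqref{a-property} and \eqref{kriv2}). Since $A\simeq k[p,q]$ is a polynomial ring, $A_{nd}$ is spanned by monomials $p^kq^l$, and the growth rate $\dim(A_{nd}/A_{(n-1)d})=d^2n+\mathrm{const}$ forced by $C^2=1$, combined with the claim, pins down $\nu(p)=(0,1)$ and $\nu(q)=(1,1)$; hence $A\simeq k[t^{-1},ut^{-1}]$ with the degree filtration and $X\simeq\Proj(\tilde A)\simeq\dpp^2$. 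If you want to salvage your write\-/up, you should replace the appeal to Proposition \ref{zero_cohomology} by this valuation\-/theoretic argument (or an equivalent one); as it stands, the step on which the whole theorem rests is assumed rather than proved.
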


\begin{proof}
Since $C$ is not contained in the singular locus of $X$, we can choose a point $P$ regular on $C$ and on $X$. Choose  local parameters $u,t$ at $P$ such that $t$ is a local equation of $C$ at the point $P$ and $u\in \co_P$ restricted to $C$ is a local equation of the point $P$ on $C$. 

Now we have the natural isomorphism
$$
\pi : \hat{\co}_P \rightarrow k[[u,t]].
$$
Using this isomorphism we can repeat the construction of the subspace $A$ from section \ref{data} and define  $A \eqdef \chi_1(H^0(X\backslash C, \co ))$.  

Repeating the arguments from the proof of \cite[lemma 3.6]{Zhe2} we obtain that for all $n\ge 0$ 
$$
H^0(X,\co_X(nC'))\simeq A_{nd},
$$
where $A_{l}=A\cap t^{-l}k[[u,t]]$. Since $C^2=1$, we must have for all $n\gg 0$ 
\begin{equation}
\label{f1}
\dim (A_{nd}/A_{(n-1)d})=d^2n+const.
\end{equation}

Consider any element $a\in A_{nd}$ such that $\nu (a)=(*,nd)$. We claim that $*\le nd$. 

Indeed, by \cite[sec.3.4]{Ku3} there is a canonically defined ribbon $(C,\ca )$ over the field $k$. Thus, by the proof of \cite[Th.1]{Ku} we can construct the space $\da$ in $k((u))((t))$ which is a generalized Fredholm subspace (see loc. cit. or section \ref{sec.2.3}). As it follows from \eqref{a-property}, the space $\da (nd)$ 
 is naturally isomorphic  to a Fredholm subspace in the field $k((u))$ obtained as the image of the sheaf $\co_X(nC')|_{C}$ under the Krichever map. For $n\gg 0$ we have also $H^0(C, \co_X(nC')|_C)\simeq A_{nd}/A_{nd-1}$ and by \eqref{kriv2} 
\begin{equation}
\label{f2}
\dim (\da (nd)\cap k[[u]])=h^0(C, \co_X(nC')|_C)=nd +const. 
\end{equation}

\begin{nt}
Alternately, we can just repeat the construction of the generalized Krichever map from \cite{Pa} or from \cite{Os} in our situation (replacing a Cartier divisor there with a $\dq$-Cartier one) to avoid referring to the theory of ribbons.
\end{nt}

Since $C^2=1$, we have that the Euler characteristic 
$$
\chi (\da (nd)) =nd + const.
$$
Now we can apply arguments from the proof of \cite[Th.1]{ZO} to show that $*\le nd$. Assume the converse. We have $a\cdot \da (0)\subset \da (nd)$. It is easy to see that $\chi (a\cdot \da (0))=\chi (\da (0))+*$. Now we have
$$
\chi (\da (nd))=nd + const < * + const =\chi (\da (0))+*=\chi (a\cdot \da (0))\le \chi (\da (nd)), 
$$ 
a contradiction. 

Now note that, since $X\backslash C\simeq \da^2$, we have $A\simeq k[p,q]$. So, the space $A$ is generated by monomials $p^kq^l$. Because of the claim and formulas \eqref{f1} and \eqref{f2} we conclude that (without loss of generality) $\nu (p)=(0,1)$, $\nu (q)=(1,1)$ (since any other values would make these formulas impossible). But then $A\simeq k[t^{-1},ut^{-1}]$ and 
$X\simeq \Proj (\oplus A_n)=\dpp^2$, $C\simeq \Proj (\oplus A_n/A_{n-1})=\dpp^1$ (cf. the proof of \cite[lemma 3.3]{Zhe2}).

\end{proof}

\subsubsection{The example of an affine non-spectral surface}

\begin{ex}
\label{counterex_aff}
Using the idea from the proof of theorem \ref{completion_of_plane} we can give an example of an affine surface which can not be a spectral surface of any ring $B$ of PDO's of rank one satisfying the property \ref{property}. For example,  consider the ring  
\begin{equation}
\label{r}
A=k[X_1, X_2, X_3]/(F),
\end{equation}
where $F=X_1X_2 + X_3 +\sum_{q=1}^{r} g_qX_1^q$, and $g_i\in k[X_3]$ are any polynomials and $k$ is an algebraically closed field. 

Then (see \cite[Ch.VII,\S 3, Ex.5]{Bu}) $A$ is a factorial ring, and $\Spec (A)$ is a rational affine surface. It is easy to see that $A$ is not isomorphic to a polynomial ring $k[u,v]$ for generic $g_i$ and that $\Spec (A)$ is a smooth surface. Assume that there exists a ring $B\subset D$ of rank one satisfying the property \ref{property} and such that $B\simeq A$. Without loss of generality we can assume that $B$ is a 1-quasi-elliptic strongly admissible ring. 
Since the rank of the ring is one, the rank of the data is also one by the classification theorem \ref{dannye2}. Then the sheaf $\cf$ is coherent of rank one by \cite[Prop.3.3]{Ku3}. By theorem \ref{CMmodules} the sheaf $\cf$ is Cohen-Macaulay.  Since $\Spec (A)$ is smooth, $\cf$ must be locally free on $\Spec (A)$. But since $A$ is a factorial ring, we have $Cl (A)\simeq \Pic (A) =0$, thus $\cf |_{\Spec (A)}\simeq \co_{\Spec (A)}$. 
But then the space $W$ of the corresponding Schur pair must be equal to the space $A$. Therefore, $A\simeq k[ut^{-1},t^{-1}]$ (where $u,t$ are the parameters from \eqref{psi_1}), a contradiction.   
\end{ex}

\subsubsection{The Darboux transformation for PDOs with rational spectral surface}

\begin{theo}
\label{Darboux}
Let $B\subset D$ be a commutative ring of rank $\Rk (B)=1$ satisfying the properties \eqref{property}. Assume that the normalization of $\Spec (B)$ is isomorphic to $\da^2$. 

Then there exists a PDO $F$ such that $F^{-1}BF\subset k[\partial_1, \partial_2]$. 

More precisely, $F=S\partial_2^n$, where $S$ is an operator as in the analogue of the Sato theorem \ref{Sato}.  
\end{theo}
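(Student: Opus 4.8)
The plan is to realise $B$ through the Schur pair of its geometric datum, to use the normalisation hypothesis together with Theorem \ref{completion_of_plane} to pin down the associated projective surface, and then to straighten the datum into constant-coefficient form.

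First I would carry out the standing reductions. By \cite[Lemma 2.6, Prop.2.4]{Zhe2} and remarks \ref{as_this_lemma}, \ref{B_of_rank_r}, an (almost arbitrary) linear change of variables makes $B$ a $1$-quasi-elliptic strongly admissible ring containing a pair of normalised operators $P,Q$ with the symbol normalisation \eqref{extraproperty}; such a change conjugates $B$ by an operator of the special form \eqref{special_form} and alters the eventual $F$ only through the harmless substitutions \eqref{special_change}, so it suffices to treat the reduced ring. Theorem \ref{dannye2} attaches to $B$ a geometric datum $(X,C,P,\cf,\pi,\phi)$, and since $\Rk(B)=1$ forces $N_B=\rk(B)=1$ (remark \ref{rank_for_PDO}), the datum has rank one, $C^2=1$ by \cite[Th.2.1]{Ku3}, and $\cf$ is a coherent rank one sheaf, Cohen-Macaulay by Theorem \ref{CMmodules}, with $\co_X\subset\cf$ (remark \ref{Kur}; moreover $P\notin\Sup(\cf/\co_X)$). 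Setting $A=\chi_1(H^0(X\backslash C,\co_X))\subset k[[u]]((t))$ and $W=\chi_1(H^0(X\backslash C,\cf))$, Theorem \ref{schurpair} and \eqref{psi_1} furnish the Sato operator $S$ with $\psi_1^{-1}(W)=W_0S$ and $B=S\,\psi_1^{-1}(A)\,S^{-1}$; moreover $\Spec(B)\simeq X\backslash C=\Spec(A)$.

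Next I would analyse the normalisation. As $k[[u]][[t]]$ is regular, $k[[u]]((t))$ is integrally closed in its fraction field, so the integral closure $\tilde A$ of $A$ again lies in $k[[u]]((t))$; by hypothesis $\tilde A$ is the coordinate ring of $\da^2$. Filtering $\tilde A$ by pole order along $C$ and forming $\Proj$ as in section \ref{data} produces a normal projective surface $\tilde X$ with boundary divisor $\tilde C$ and $\tilde X\backslash\tilde C\simeq\da^2$; one checks that $\tilde C$ is integral, ample and $\dq$-Cartier with $\tilde C^2=1$ (the self-intersection being inherited from $C^2=1$ through the finite birational map $\tilde X\to X$, which is an isomorphism near the smooth point $P$). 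Theorem \ref{completion_of_plane} then gives $\tilde X\simeq\dpp^2$, $\tilde C\simeq\dpp^1$. Running the valuation estimate of that theorem (through the generalized Krichever map and the Euler characteristic forced by $C^2=1$) inside the fixed field $k[[u]]((t))$, and using that over the smooth point $P$ the parameters $u,t$ may be taken adapted to the line at infinity of $\dpp^2$, I would identify the embedded normalisation on the nose: $\tilde A=k[t^{-1},ut^{-1}]$. Consequently $A\subset\tilde A=k[t^{-1},ut^{-1}]$, which in operator language reads
$$
\psi_1^{-1}(A)\subset\psi_1^{-1}(k[t^{-1},ut^{-1}])=k[\partial_1,\partial_2].
$$

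Finally I would assemble $F$. Because $\psi_1^{-1}(A)$ consists of constant-coefficient operators, it commutes with $\partial_2$, so for every $n$
$$
(S\partial_2^n)^{-1}B(S\partial_2^n)=\partial_2^{-n}\bigl(S^{-1}BS\bigr)\partial_2^n=\psi_1^{-1}(A)\subset k[\partial_1,\partial_2].
$$
It remains to choose $n$ so that $F=S\partial_2^n$ is an honest PDO. Here I would use that $\co_X\subset\cf$ with $P\notin\Sup(\cf/\co_X)$, so that passing from the structure sheaf to $\cf$ is a finite Darboux modification; this bounds the negative part of the Sato operator $S=1+S^-$ and forces $S\partial_2^n\in D$ for all $n$ large enough, completing the proof. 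The main obstacle I anticipate lies in the middle paragraph — proving the full, not merely leading-term, identification $\tilde A=k[t^{-1},ut^{-1}]$ (equivalently, the tail-free containment $\psi_1^{-1}(A)\subset k[\partial_1,\partial_2]$) — together with the final verification that $S\partial_2^n$ is differential, i.e. that the Darboux transformation linking $B$ to the constant-coefficient ring is realised by a genuine differential operator rather than only by a formal pseudo-differential dressing.
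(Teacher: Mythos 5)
Your overall strategy (pass to the Schur pair, use Theorem \ref{completion_of_plane} to identify the normalization of $X$ with $\dpp^2$, then straighten everything to constant coefficients) is the same as the paper's, and the opening reductions are fine. But there are two genuine gaps. First, the identification ``$\tilde A=k[t^{-1},ut^{-1}]$ on the nose,'' and hence the containment $\psi_1^{-1}(A)\subset k[\partial_1,\partial_2]$, is not correct as stated. The parameters $u,t$ are pinned down by the construction of the Schur pair from $B$ (via $\psi_1$ and the Sato operator), so you are not free to re-adapt them to the line at infinity of $\dpp^2$ without changing $A$, $W$ and the correspondence with $B$. What the argument of Theorem \ref{completion_of_plane} actually yields is only that the normalization $A'$ equals $k[p,q]$ with $\Sup(A')=k[ut^{-1},t^{-1}]$, i.e.\ $p,q$ are series with leading terms $t^{-1}$, $ut^{-1}$. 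To get the literal polynomial ring one must conjugate by a second admissible operator $S_1$ with $S_1^{-1}\partial_1S_1=\psi_1^{-1}(q)$, $S_1^{-1}\partial_2S_1=\psi_1^{-1}(p)$, i.e.\ replace the Schur pair by the equivalent pair $(\ca,\cw)=(S_1\psi_1^{-1}(A)S_1^{-1},\,WS_1^{-1})$; only then is $\ca\subset k[z_1^{-1},z_2^{-1}]$. Your final conjugation computation is therefore carried out with the wrong operator: the relevant $S$ is the Sato operator of the \emph{modified} space $\cw$, not of the original $W$.

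Second, and more seriously, the step you flag as ``the final verification that $S\partial_2^n$ is differential'' is the actual content of the theorem, and the appeal to $\co_X\subset\cf$ with $P\notin\Sup(\cf/\co_X)$ does not deliver it: that inclusion gives no bound on the $\partial_2^{-1}$-degree of $S^-$, nor does it show that the coefficients of $S$ have finite order in $\partial_1$. The paper's mechanism is entirely different: since $\cf$ has rank one and $\ca$ consists of polynomials, the module $\cw$ lies in $k(z_1,z_2)$ and is finitely generated over $\ca$ by elements satisfying $A_1$; one takes their common denominator $Q$ and proves the technical Lemma \ref{technical} (a statement about Laurent expansions of rational functions satisfying $A_1$) to conclude $\ord_\Gamma(Q)=(0,\deg Q)$. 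Then the equivalent space $\cw Q/\partial_2^{\deg Q}$ consists of polynomials in $\partial_1,\partial_2,\partial_2^{-1}$ of $\partial_2^{-1}$-degree at most $\deg Q$, and the uniqueness in Theorem \ref{Sato} forces the corresponding Sato operator to be a polynomial in $\partial_2^{-1}$ of degree at most $\deg Q$ with coefficients in $k[[x_1,x_2]][\partial_1]$ (the latter via \cite[Lemmas 2.9, 2.11]{Zhe2} and Remark \ref{B_of_rank_r}), whence $F=S\partial_2^{\deg Q}\in D$. Without an argument of this kind (rationality of $W$, control of the denominator's $\Gamma$-order, and the resulting truncation of $S$), the proof is incomplete at its central point.
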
 

\begin{proof}
We can assume without loss of generality that $B$ is a finitely generated 1-quasi-elliptic strongly admissible ring satisfying the property \eqref{extraproperty} (see the beginning of section \ref{CRO} and arguments before remark \ref{remark}), since our assertion don't depend on linear changes of coordinates.

Since the rank of the ring $B$ is one, the rank of the corresponding geometric data $(X,C,P,\cf ,\pi ,\phi )$ is also one by the classification theorem \ref{dannye2}. Then the sheaf $\cf$ is coherent of rank one by \cite[Prop.3.3]{Ku3} and $C$ is a rational curve with $C^2=1$ by \cite[Prop.3.2]{Ku3}. 

The assumption about normalization is equivalent to the assumption that the normalization of $\Spec (B)\simeq  X\backslash C$  is isomorphic to $\da^2$. Note that this assumption is equivalent to 
the assumption that the normalization of $X$ is isomorphic to $\dpp^2$. Indeed, if $p:\dpp^2\simeq \tilde{X}\rightarrow X$ is the normalization morphism, then $p^*(C)$ is a rational irreducible curve. Thus, we have $p^*(C)$ is an ample rational Cartier-Weil divisor on $\dpp^2$ with $p^*(C)^2=1$, i.e. $p^*(C)=\dpp^1$. Therefore, the normalization of $\Spec (B)$ is isomorphic to the complement to $\dpp^1$ in $\dpp^2$, i.e. $\da^2$. 
The converse statement follows from the same arguments together with theorem \ref{completion_of_plane}.

Let $(\tilde{X}\simeq \dpp^2, p^*(C)\simeq \dpp^1, p^*(P)=\tilde{P})$ be the normalization of $(X,C,P)$. Since $P$ is regular, the local rings $\co_{X,P}$ and $\co_{\dpp^2,\tilde{P}}$ are canonically isomorphic. 

Now we can repeat the arguments from the beginning of the proof of theorem \ref{completion_of_plane} to get an embedding of $H^0(\dpp^2\backslash \dpp^1,\co )$ to the same space $k[[u]]((t))$ (here $u,t$ are local parameters at the point $P\in X$). Let's denote the image by $A'$. As we have seen above, $A'$ must be the normalization of $A$. The arguments from the proof of theorem \ref{completion_of_plane} show that in fact $A'\simeq k[p,q]$ with the highest terms of the series $p,q$ equal to $t^{-1}$, $ut^{-1}$ correspondingly  (so, $\Sup (A')=k[ut^{-1},t^{-1}]$).  

Set $A''=\psi_1^{-1}(A')$ (see \eqref{psi_1}). Then we have $\Sup (A'')=k[z_1^{-1},z_2^{-1}]$, and $A''$ is a $1$-space. By \cite[Lemma 2.11, 2),3)]{Zhe2} there is an operator $S$ such that $S^{-1}\partial_1S=\psi_1^{-1}(q)$, $S^{-1}\partial_2S=\psi_1^{-1}(p)$, and $S$ satisfies the condition $A_1$.  Thus, $S\in \Adm_1$. 

Now consider the Schur pair $(\psi_1^{-1}(A),W)$ {\it from theorem} \ref{schurpair} corresponding to the ring $B$. Consider the equivalent pair $(\ca =S\psi_1^{-1}(A)S^{-1}, \cw =WS^{-1})$. Then the ring $SA''S^{-1}=k[z_1^{-1},z_2^{-1}]$ is the normalization of the ring $S\psi_1^{-1}(A)S^{-1}$ (in the field $k(z_1,z_2)\subset k((z_1))((z_2))$). Thus, all elements of the space  $S\psi_1^{-1}(A)S^{-1}$ are polynomials in $z_1^{-1},z_2^{-1}$. 

The space $WS^{-1}$ is a finitely generated module over $S\psi_1^{-1}(A)S^{-1}$. Without loss of generality we can assume that $1\in WS^{-1}$ by taking another equivalent Schur pair $(\ca , \cw T)$ if needed (for an appropriate operator $T$ with constant coefficients; $T$ just changes the trivialisation $\phi$ in definition \ref{geomdata}, item 6). By construction of the Schur pair given in section \ref{data} we have $\cw \subset k(z_1,z_2)$ (as this Schur pair corresponds to a pair coming from the geometric data with an appropriate trivialization $\phi$, and the rank of the coherent sheaf $\cf$ is one).  

So, $\cw$ is generated by a finite number of elements from $k(z_1,z_2)$ over $\ca$. Since $\cw$ is a $1$-space, we can choose the generators to be the elements satisfying the condition $A_1$. 
Let's denote by $Q$ their common denominator. From lemma \ref{technical} (see below) it follows that  $\ord_{\Gamma}(Q)=(0,n)$, where $n=\Ord (Q)$ (here and below we identify $z_1$ with $\partial_1^{-1}$, $z_2$ with $\partial_2^{-1}$; in this case $\Ord (Q)=\deg (Q)$, where $\deg$ means the usual degree of the polynomial $Q$ in two variables).  
Consider the equivalent Schur pair $(\ca ,\cw Q/\partial_2^{\deg (Q)})$ (this is a Schur pair since $Q/\partial_2^{\deg (Q)}$ is a zeroth order operator with constant coefficients with $\ord_{\Gamma}(Q/\partial_2^{\deg (Q)})=(0,0)$  satisfying condition $(A_1)$!). Note that all elements from the space $\cw Q/\partial_2^{\deg (Q)}$ are just polynomials in $\partial_1$, $\partial_2$, $\partial_2^{-1}$ with constant coefficients, and the order of these polynomials with respect to $\partial_2^{-1}$ is less or equal to $\deg (Q)$.  

Then from the proof of theorem \ref{Sato} in \cite{Zhe2} immediately follows that the operator $S$ is a (non-commutative) polynomial in $\partial_2^{-1}$ of degree with respect to $\partial_2^{-1}$ not greater than $\deg (Q)$. By remark  \ref{B_of_rank_r} the ring $S\ca S^{-1}$ is a ring of PDO's. Then $S\in k[[x_1,x_2]][\partial_1]((\partial_2^{-1}))$ (this immediately follows from lemmas 2.9, 2.11 item 2,3 in \cite{Zhe2}).  Thus, we can set $F=S\partial_2^{\deg (Q)}$. 

\end{proof}

\begin{lemma}
\label{technical}
Assume that the Laurent expansion of the element 
$$P/Q\in k(\partial_1,\partial_2)\subset k((\partial_1^{-1}))((\partial_2^{-1})), 
$$
where $P,Q\in k[\partial_1,\partial_2]$ are relatively prime, belongs to $k[\partial_1]((\partial_2^{-1}))$. Assume that this expansion satisfies the condition $A_1$. 

Then $\ord_{\Gamma}(Q)=(0,\Ord (Q))$. 
\end{lemma}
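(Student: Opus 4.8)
The plan is to reduce the statement to a divisibility fact in a completed local ring of $\dpp^2$ and then exploit the coprimality of $P$ and $Q$. First I observe that, since $\Ord(Q)=\deg(Q)$ (the total degree in $\partial_1,\partial_2$) and always $\deg_{\partial_2}(Q)\le\deg(Q)$, the desired equality $\ord_{\Gamma}(Q)=(0,\Ord(Q))$ is equivalent to the single assertion $\deg_{\partial_2}(Q)=\deg(Q)$, i.e. that the pure monomial $\partial_2^{\deg(Q)}$ occurs in $Q$ with nonzero coefficient. Indeed, once $\deg_{\partial_2}(Q)=\deg(Q)=:e$, any term of $\partial_2$-degree $e$ has total degree $e$, hence its $\partial_1$-power is $0$; so $\ord_{\Gamma}(Q)=(0,e)$ automatically. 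Thus it suffices to prove that the coefficient $c_{0,e}$ of $\partial_2^{e}$ in $Q$ is nonzero.

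Next I transport the hypotheses through the isomorphism $\psi_1$ of \eqref{psi_1}. Since $P/Q\in k[z_1^{-1}]((z_2))=k[\partial_1]((\partial_2^{-1}))$ and satisfies $A_1$, it lies in $\Pi$, so $\psi_1(P/Q)\in k[[u]]((t))=k[[u,t]][t^{-1}]$; likewise $P,Q\in\Pi$ and $\psi_1$ is multiplicative there, whence $\psi_1(P/Q)=\psi_1(P)/\psi_1(Q)$. Writing $d=\deg P$, $e=\deg Q$ and using $\psi_1(\partial_1)=ut^{-1}$, $\psi_1(\partial_2)=t^{-1}$, I get $\psi_1(P)=t^{-d}\hat P$ and $\psi_1(Q)=t^{-e}\hat Q$, where $\hat P,\hat Q\in k[[u,t]]$ are the dehomogenizations $P^{\mathrm{hom}}(u,1,t)$, $Q^{\mathrm{hom}}(u,1,t)$, i.e. local equations at the point $O=[0:1:0]\in\dpp^2$ of the projective closures of $\{P=0\}$ and $\{Q=0\}$. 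This produces the key relation $\hat Q\cdot S=t^{e-d}\hat P$ with $S:=\psi_1(P/Q)\in k[[u,t]][t^{-1}]$. Here $\hat Q(0,0)=c_{0,e}$, while $\hat Q(u,0)=Q_e(u,1)$ is a nonzero polynomial in $u$ ($Q_e$ being the top-degree form of $Q$).

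Now I run a divisibility argument in the UFD $k[[u,t]]$. Because $P,Q$ are coprime, so are $\hat P,\hat Q$ in $k[[u,t]]$: a common factor would cut out a one-dimensional germ inside the finite set $V(\hat P)\cap V(\hat Q)$, which is impossible. For any irreducible $\pi\in k[[u,t]]$ not associate to $t$, the $\pi$-adic valuation $v_\pi$ on $\Frac(k[[u,t]])$ gives $v_\pi(\hat Q)+v_\pi(S)=v_\pi(\hat P)$, and $v_\pi(S)\ge0$ because $S$ has only powers of $t$ in its denominator; hence $\pi\mid\hat Q$ would force $\pi\mid\hat P$, contradicting coprimality. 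Therefore $\hat Q$ has no irreducible factor other than $t$, so $\hat Q=t^{c}\cdot(\text{unit})$ for some $c\ge0$. Since $\hat Q(u,0)=Q_e(u,1)\not\equiv0$, we must have $c=0$, so $\hat Q$ is a unit and $c_{0,e}=\hat Q(0,0)\ne0$. This is exactly $\deg_{\partial_2}(Q)=\deg(Q)$, completing the argument.

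The step I expect to require the most care is the transfer: one must verify precisely that the $A_1$ growth condition is what guarantees $\psi_1(P/Q)\in k[[u]]((t))$ with no negative powers of $u$, that $\psi_1$ is genuinely multiplicative on $\Pi$ so that $\psi_1(P/Q)=\psi_1(P)/\psi_1(Q)$, and the identification $k[[u]]((t))=k[[u,t]][t^{-1}]$. The coprimality of $\hat P,\hat Q$ in the completed local ring (distinct algebraic curves sharing no analytic branch at $O$) also needs justification, but it follows from the finiteness of $V(\hat P)\cap V(\hat Q)$.
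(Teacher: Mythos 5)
Your proof is correct, but it takes a genuinely different route from the paper's. You transport the problem through the monomial substitution $\psi_1$ ($\partial_1\mapsto ut^{-1}$, $\partial_2\mapsto t^{-1}$) into the local ring $k[[u,t]]$: the two hypotheses ($P/Q\in k[\partial_1]((\partial_2^{-1}))$ and condition $A_1$) say precisely that $S=\psi_1(P/Q)$ has no negative powers of $u$ and powers of $t$ bounded below, i.e.\ $S\in k[[u,t]][t^{-1}]$, and then the identity $\hat Q\, S=t^{e-d}\hat P$ together with coprimality of the dehomogenizations $\hat P,\hat Q$ in the UFD $k[[u,t]]$ forces every irreducible factor of $\hat Q$ to be associate to $t$, while $\hat Q(u,0)=Q_e(u,1)\neq 0$ kills the power of $t$; geometrically, the projective closure of $\{Q=0\}$ misses the point $[0:1:0]$, which is exactly $\ord_{\Gamma}(Q)=(0,\Ord(Q))$. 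The paper instead works entirely inside $k((\partial_1^{-1}))((\partial_2^{-1}))$: its Step 1 uses the stability of the condition $A_1$ under products and inverses (Lemma 2.8 and Corollary 2.1 of \cite{Zhe2}) to show $\deg(q_n)+n=\Ord(Q)$ for the leading $\partial_2$-coefficient $q_n$, and its Step 2 rules out a nonconstant $q_n$ by a Euclidean reduction to the case $\deg P=0$ followed by the observation that the geometric-series expansion of $Q^{-1}$ would force $P$ to be divisible by arbitrarily high powers of a prime factor of $q_n$. Your argument is shorter and settles both parts of the conclusion at once, at the cost of the transfer steps you correctly flag as needing care (multiplicativity of $\psi_1$ on the relevant subspace, and coprimality of $\hat P,\hat Q$ in the completion, which indeed follows from the finiteness of $V(\hat P)\cap V(\hat Q)$); the paper's argument stays within the $A_1$-calculus already developed in \cite{Zhe2} and requires no algebro-geometric input.
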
 

\begin{proof} The proof of this lemma is based on several technical routine elementary calculations, and we will hardly use some technical lemmas from \cite{Zhe2}. 

Assume the converse. Then $Q$ can be represented as a polynomial in $\partial_2$ of the order with respect to $\partial_2$ less than $\Ord (Q)$, say 
$$
Q=q_n\partial_2^n-\sum_{l=0}^{n-1}q_l\partial_2^l, \mbox{\quad } n < \Ord (Q),
$$
where $q_l\in k[\partial_1]$.  Let 
$$
P=\sum_{l=0}^mp_l\partial_2^l. 
$$
Now we will prove our lemma in several steps. 

{\it Step 1.} First we claim that $\deg (q_n)+n=\Ord (Q)$.

Clearly, we always have $\deg (q_n)+n\le\Ord (Q)$. Assume that $\deg (q_n)+n< \Ord (Q)$. Let's show that this will contradict to the condition $A_1$ for the element $P/Q$. Since we are working with series in the field $k((\partial_1^{-1}))((\partial_2^{-1}))$ of pseudo-differential operators with constant coefficients, we can literally  repeat the proofs of lemma 2.8 and corollary 2.1 in \cite{Zhe2} to show that the statements from these claims remain true also for operators from $k((\partial_1^{-1}))((\partial_2^{-1}))$. 

In particular, $Q^{-1}$ don't satisfy the condition $A_1$. Indeed, assume that $Q^{-1}$ satisfies the condition $A_1$. Then $Q^{-1}=q_n^{-1}\partial_2^{-1}Q'$, where $Q'$ is an operator of the form from \cite[Corol.2.1]{Zhe2} satisfying the condition $A_1$ (by \cite[Lemma 2.8]{Zhe2}). Then $Q=(Q^{-1})^{-1}=q_n\partial_2(Q')^{-1}$ must satisfy the condition $A_1$ by \cite[Lemma 2.8, Corol.2.1]{Zhe2}. But $Q$ don't satisfy the condition $A_1$ by our assumption (namely, the term with the first coefficient $q_i$ of $Q$ such that $\deg (q_i) + i=\Ord (Q)$ will contradict the condition $A_1$), a contradiction. 

Let $P=P_1+P_2$ be any decomposition of $P$ in a sum of two PDOs with constant coefficients such that $P_1$ satisfies the condition $A_1$ and the degree of $P_2$ with respect to $\partial_2$ is less than $m$ ($P_2$ may be zero). Let $Q^{-1}=Q_1+Q_2$ be any decomposition of $Q^{-1}$ in a sum of two pseudo-differential operators from $k((\partial_1^{-1}))((\partial_2^{-1}))$
such that $Q_1$ satisfies the condition $A_1$ and the degree of $Q_2$ with respect to $\partial_2$ is less than $-n$ (since $Q^{-1}$ don't satisfy the condition $A_1$, $Q_2$ is not zero). Denote by $\alpha$ the first coefficient of $Q_2$, and by $\beta$ the first coefficient of $P_2$ if $P_2\neq 0$.  
Now we have two cases: if $P_2=0$ then the product $PQ^{-1}$ will not satisfy the condition $A_1$, because the coefficient of $PQ^{-1}$ containing  $p_m\alpha $ will not satisfy it; if $P_2\neq 0$ then the product $PQ^{-1}$ will not satisfy the condition $A_1$, because the coefficient of $PQ^{-1}$ containing $\beta\alpha$ will not satisfy it. Thus, $PQ^{-1}$ does not satisfy the condition $A_1$, a contradiction. 

{\it Step 2.} Now the idea of the proof is to come to a contradiction with the assumption that $q_n\neq const$.

 Obviously, we can multiply the element $P/Q$ on an appropriate degree of $\partial_2^{-1}$ to make the degree of its Laurent expansion to be zero. Thus, we can assume without loss of generality that $P,Q$ are polynomials in $\partial_2^{-1}$ with nonzero free terms $p_m$, $q_n$ correspondingly. 
 
Now  we can write
\begin{equation}
\label{p/q}
P/Q=(\sum_{l=0}^m{p_l}\partial_2^{l-m})(\sum_{l=0}^n{q_l}\partial_2^{l-n})^{-1}=
(\sum_{l=0}^m\frac{p_l}{q_n}\partial_2^{l-m})(\sum_{i=0}^{\infty}(\sum_{l=0}^{n-1}\frac{q_l}{q_n}\partial_2^{l-n})^i).
\end{equation} 
Note that not all $q_{i}$ are divisible by $q_n$. Indeed, otherwise $(q_n^{-1}Q)\in k[\partial_1,\partial_2]$ and therefore 
$$
q_n^{-1}P=(PQ^{-1})(q_n^{-1}Q)\in k[\partial_1]((\partial_2^{-1}))\cap k((\partial_1^{-1}))[\partial_2]=k[\partial_1,\partial_2]
$$ 
i.e. $P$ and $Q$ are divisible by $q_n\neq const$, a contradiction. 

Note that we can reduce the proof to the case $\deg (P)\le n-1$ (the degree now means the degree with respect to $\partial_2^{-1}$). Indeed, it is easy to see that $p_m$ must be divisible by $q_n$. Since $P/Q \in k[\partial_1]((\partial_2^{-1}))$, all expressions of type $(P/Q - a)\partial_2^k$ will again belong to $k[\partial_1]((\partial_2^{-1}))$ for any polynomial $a\in k[\partial_1]$. Thus, if we take $a=p_m/q_n$, then $(P/Q - a)\partial_2=P'/Q$, where $\deg (P') <\deg (P)$ if $m\ge n$. Note that $P'\neq 0$, since $P,Q$ are relatively prime. 

Analogously we can reduce the proof to the case $\deg (P)=0$. Indeed, using Euclidean algorithm, we can always find polynomials $a\in k[\partial_1]$ and $F\in k[\partial_1,\partial_2^{-1}]$ such that $\deg (aQ-FP)<\deg (P)$ if $\deg (P)\neq 0$. Again $(aQ-FP)\neq 0$, since $P,Q$ are relatively prime and $\deg (P)\neq 0$. Thus, $F(P/Q)-a=P'/Q$ with $\deg (P')<\deg (P)$, $P'\neq 0$.  

At last, in the case $\deg (P)=0$ the proof follows immediately from \eqref{p/q}: $P$ must be divisible by infinite power of some prime factor of $q_n$, i.e. $P=0$, a contradiction. 

\end{proof}

\section{Examples} 

In this section we give several examples. 

\subsubsection{"Trivial" commutative algebras of operators} 
First we would like to explain the geometric picture for a class of "trivial" examples. These are examples of rings of commuting operators in $\hat{D}$ containing, say, the operator $\partial_1$. In this case all operators obviously don't depend on $x_1$. Nevertheless, the geometry of the corresponding surfaces and even the naive moduli space of sheaves from geometric data are non-trivial. 

Note that, if we have a commutative $1$-quasi-elliptic strongly admissible ring $B\subset D$ satisfying properties \eqref{property}, \eqref{extraproperty} and containing the operator $\partial_2$, then after a linear change $\partial_1\leftrightarrow \partial_2$ the ring $B$ will remain $1$-quasi-elliptic strongly admissible and will contain the operator $\partial_1$. So, in particular, the well known example of the quantum Calogero-Moser system (see \cite{OP},  \cite[sec. 5.3]{BEG} and \cite[Ex.4.3]{Zhe2}) belongs to this class of "trivial" examples. We would like to emphasize that in \cite[sec. 5.3]{BEG} the affine spectral surface of this system was calculated: it is $\da^1\times H$, where $H$ is some hyperelliptic curve. So, by \cite[Th.2.1]{Ku3} the projective spectral surface $X$ from the corresponding geometric data is normal, and singularities appear only on the curve $C$ (which is rational).  

\begin{theo}
\label{trivial}
Let $B\subset \hat{D}$ be a Cohen-Macaulay finitely generated 1-quasi-elliptic strongly admissible ring of commuting operators (note that by \cite[Th.4.1]{Zhe2} any finitely generated 1-quasi-elliptic strongly admissible ring $B$ lies in a Cohen-Macaulay finitely generated 1-quasi-elliptic strongly admissible ring). 

Then $B$ contains $\partial_1$ if and only if the divisor $C$ of the corresponding geometric data is Cartier, the sheaf $\cf$ is coherent of rank one, $\co_C(C)\simeq \co_C(P)$ and the map 
$$H^1(X,\co_X)\rightarrow H^1(X,\co_X(C))$$ 
is injective. 

Moreover, if the ground field $k$ is uncountable and algebraically closed, the sheaf $\cf$ is Cohen-Macaulay on $X$. 
\end{theo}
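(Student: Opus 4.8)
The plan is to convert the condition $\partial_1\in B$ into a statement about the associated Schur pair $(A,W)$ and then into the divisorial and cohomological data $(X,C,P,\cf)$, using the dictionary of Theorem \ref{dannye2} together with the explicit shape of admissible operators. I would write $A=S^{-1}BS$ with $S$ the operator of Theorem \ref{schurpair}, and identify $A\subset k[[u]]((t))$ via $\psi_1$ from \eqref{psi_1}. The key elementary point is that by remark \ref{admnote} the conjugate $L_1=S^{-1}\partial_1 S=\partial_1+\sum_{q\ge1}v_q\partial_2^{-q}$ has constant coefficients, so $\partial_1\in B$ forces $a:=\psi_1(L_1)=ut^{-1}+\sum_{q\ge1}v_qt^q\in A$, an element with $\nu(a)=(1,-1)$ and leading term $ut^{-1}$. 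The crucial reduction, needed mainly for the converse, is that the mere presence in $A$ of \emph{any} element of valuation $(1,-1)$ already yields $\partial_1\in B$: such an element is $\psi_1$ of an operator of order $\ord_\Gamma=(1,0)$, and after passing (harmlessly, via the change \eqref{special_change}, which sends $\partial_1\mapsto\partial_1+d$) to a representative of the class containing a pair of normalized operators (remark \ref{B_of_rank_r}), Lemma \ref{elementary_calculations}(i) forces every operator of order $(1,0)$ to equal $c_1\partial_1+c_0$; comparing leading symbols gives $c_1=1$, so $\partial_1=Q'-c_0\in B$. Thus both directions reduce to deciding when $A$ contains an element of valuation $(1,-1)$.

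For the forward direction I would first note $\Ord(\partial_1)=1$, whence $\tilde N_B=\rk(B)=1$; by Theorem \ref{dannye2} the datum has rank one, and by \cite[Prop.3.2, Prop.3.3]{Ku3} together with remark \ref{Kur} the sheaf $\cf$ is coherent of rank one and $C^2=1$. The element $a$ above is a rational function, regular on $X\setminus C$, with a pole of order exactly one along $C$; its image $\bar a\in\cb_1/\cb_0$ restricts (via $at\equiv u\bmod t$) to the function $u$, which vanishes to order one only at $P$, so $\cb_1/\cb_0\simeq\co_C(P)$, i.e. $\co_C(C)\simeq\co_C(P)$. To see that $C$ is Cartier I would argue locally: along $C\setminus\{P\}$ the section $\bar a$ is nowhere zero, so $a^{-1}$ is regular and vanishes to order one along $C$, hence is a local equation of $C$; at $P$ the surface is regular (remark \ref{ophi}), so $C$ is Cartier there as well. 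With $C$ Cartier, property \eqref{car-property} gives $\cb_1\simeq\co_X(C)$ and the exact sequence $0\to\co_X\to\co_X(C)\to\co_C(C)\to0$ identifies the injectivity of $H^1(X,\co_X)\to H^1(X,\co_X(C))$ with the surjectivity of $A_1=H^0(X,\co_X(C))\to H^0(C,\co_C(C))$, which I would verify from the explicit structure of $A$ (equivalently from \eqref{picture_cohomology1}), the section $\bar a$ accounting for the distinguished generator of $H^0(C,\co_C(P))$.

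For the reverse direction, assume the four conditions. Since $C$ is Cartier, \eqref{car-property} again gives $\cb_1\simeq\co_X(C)$ and the sequence above; the injectivity hypothesis makes $H^0(X,\co_X(C))\to H^0(C,\co_C(C))$ surjective, and $\co_C(C)\simeq\co_C(P)$ supplies the section vanishing only at $P$. Lifting that section produces $a\in A_1$ with $\nu(a)=(1,-1)$, and by the first paragraph this yields an operator of order $(1,0)$ in $B$, hence $\partial_1\in B$. For the final assertion, with $k$ uncountable and algebraically closed I would invoke Proposition \ref{zero_cohomology}(iv): its hypotheses hold because $\cf$ is coherent of rank one with $\chi(\cf(nC'))=\tfrac{(nd+1)(nd+2)}{2}$ (remark \ref{Kur}) and $C^2=1$, while the vanishing of $H^0$ and $H^1$ of $\zeta(\cf_k)(-(k+1)Q)$ at a suitable smooth $Q\in C$ follows from \eqref{kriv3} and the uncountability of $k$; the proposition then gives that $\cf$ is Cohen--Macaulay on all of $X$.

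The main obstacle I anticipate lies in the middle two conditions and in the equivalence-invariance of the statement. Proving that $C$ is genuinely Cartier requires controlling the local-equation argument at the singular points of $X$ lying on $C$, and establishing the identification of the $H^1$-injectivity with the solvability of the lifting problem for the distinguished section is exactly the step where the completed-operator data must be converted into projective-geometric positivity. A secondary subtlety is that $\partial_1\in B$ is not manifestly invariant under all equivalences of rings, so the reduction to a ring with normalized operators (used in the converse) must be carried out only through changes that preserve the question, which I would track carefully via remarks \ref{as_this_lemma} and \ref{B_of_rank_r}.
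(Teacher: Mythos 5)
Your overall strategy coincides with the paper's: translate $\partial_1\in B$ into the existence of an element $a\in A_1$ with $\nu(a)=(1,-1)$, deduce rank one, $C^2=1$ and coherence of $\cf$, read off $\co_C(C)\simeq\co_C(P)$ from the section $\bar a$, control the $H^1$-map through the restriction sequence, and get the Cohen--Macaulay statement from Proposition \ref{zero_cohomology}. Your use of Lemma \ref{elementary_calculations}(i) to upgrade ``$A$ contains an element of valuation $(1,-1)$'' to ``$\partial_1\in B$'' is a good way to make precise what the paper leaves terse in the converse. However, there is one genuine gap in the forward direction. The injectivity of $H^1(X,\co_X)\to H^1(X,\co_X(C))$ is indeed equivalent to surjectivity of $A_1=H^0(X,\co_X(C))\to H^0(C,\co_C(C))$, but your verification --- ``the section $\bar a$ accounting for the distinguished generator of $H^0(C,\co_C(P))$'' --- only settles the case $h^0(C,\co_C(P))=1$. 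The case $h^0(C,\co_C(P))=2$ cannot be excluded a priori (it occurs exactly when $C\simeq\dpp^1$), and there your single section $\bar a$ does not generate the two-dimensional target; one must exhibit a second independent element of $A_1/A_0$. The paper spends a separate and substantial argument on this case: $h^0=2$ forces $C\simeq\dpp^1$, hence $X$ smooth along $C$ and normal, hence by Badescu a neighbourhood of $C$ is a neighbourhood of a line in $\dpp^2$; then $\zeta(\cf)\simeq\co_C$, $\cf\simeq\co_X$ near $C$, $W_n\simeq A_n$, and finally $X=\dpp^2$ with $H^1(X,\co_X)=0$. Without some version of this dichotomy your forward implication is incomplete.

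A secondary point is your proof that $C$ is Cartier, which genuinely departs from the paper. The paper shows that the Veronese subalgebras $\tilde A^{(m)}$ are generated in degree one for all $m\gg0$ (using that $\partial_1\in B$ supplies operators of bi-order $(m,a)$ for all large $m$ and all $a$ in a fixed set), so that $mC$ and $(m+1)C$ are simultaneously Cartier and hence $C$ is. Your local argument --- that $a^{-1}$ is a local equation of $C$ at points of $C\setminus\{P\}$ --- can be made to work, but as written it asserts two nontrivial facts: (a) that $\bar a$ has no zeros on $C\setminus\{P\}$, which requires first computing that the torsion-free sheaf $\cb_1/\cb_0$ has degree one (via the Fredholm index of $\da(1)$; note you cannot yet call it $\co_C(C)$ at this stage), and (b) that on the merely Cohen--Macaulay surface $X$ a function with valuation one along $C$ and no other zeros through $x$ generates $I_{C,x}$, which uses the $S_2$ property in an essential way. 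Both are fillable, but they are the actual content of the step, not an afterthought.
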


\begin{proof} Recall that the surface $X$ corresponding to $B$ is Cohen-Macaulay by \cite[Th.3.2]{Ku3}. 

Assume first that $B$ contains $\partial_1$. Since $B$ is 1-quasi-elliptic strongly admissible, this means that $\rk (B)=1$. Also this means that for any $n\gg 0$ there are operators $P_n\in B$ with $\ord_{\Gamma}(P_n)=(0,n)$. 
Thus, we can give an approximation of the dimension of the space $A_n\subset A$ (where, as usual, $A$ means the space of the Schur pair corresponding to the ring $B$): $\dim_k (A_n)\sim n^2/2$ for all $n\gg 0$. Then it follows from the asymptotic Riemann-Roch formula \eqref{rrf} that $C^2=1$ (since $A_{nd}\simeq H^0(X,\co_X(ndC))$ for all $n\gg 0$, see section \ref{data}). Since $\rk (B)=1$, the rank of the corresponding geometric data is also one by theorem \ref{dannye2}. Thus, by \cite[Prop.3.2]{Ku3} the corresponding sheaf $\cf$ is coherent of rank one. 

Now let's prove that $C$ is a Cartier divisor. Our arguments will be very similar to the arguments from the proofs of lemma 3.3 in \cite{Zhe2} or theorem 2.1 in \cite{Ku3}. 
Recall that $X\simeq \Proj \, \tilde{A}$ and the divisor $C$ is defined by the homogeneous ideal $I=(s)$. It is not contained in the singular locus, since it contains the regular point $P$. Since $\tilde{A}$ is a finitely generated $k$-algebra with $\tilde{A}_0=k$, by~\cite[Ch.III, \S~1.3, prop.~3]{Bu} there exists an integer $d \ge 1$ such that the
$k$-algebra $\tilde{A}^{(d)}=\bigoplus\limits_{k=0}^{\infty} \tilde{A}_{kd}$ is finitely generated by elements from $\tilde{A}_1^{(d)}$ as a $k$-algebra (here $\tilde{A}_1^{(d)}= \tilde{A}_d$). 

Let's show that the divisor $dC$ is an effective Cartier divisor. 
We consider the subscheme $C'$ in $X$ which is  defined by the homogeneous ideal $I^d=(s^d)$ of the ring $\tilde{A}$.  The topological space of the subscheme $C'$ coincides with the topological space of the subscheme $C$ (as it can be seen on an affine covering of $X$). The local ring $\co_{X,C}$ coincides with the valuation ring of the discrete valuation on $\Quot (A)$ induced by the discrete valuation $\nu_t$ on the field $k((u))((t))$:
$$
\co_{X,C} = \tilde{A}_{(I)}= \{  a s^n / b s^n  \, \mid \,  n \ge 0,  a \in A_n, \,  b \in A_n \setminus A_{n-1} \} \mbox{.}
$$
The ideal $I$ induces the maximal ideal in the ring $\co_{X,C}$, and the ideal $I^d$ induces the $d$-th power of the maximal ideal. Therefore, if we will prove that the ideal $I^d$ defines an effective Cartier divisor on $X$, then the cycle map on this divisor is equal to $dC$ (see \cite[Appendix~A]{Ku3}). 
By~\cite[prop. 2.4.7]{EGAII} we have $X=\Proj \, \tilde{A}\simeq \Proj \, \tilde{A}^{(d)}$. Under this isomorphism the subscheme $C'$ is defined by the homogeneous ideal $I^d \cap \tilde{A}^{(d)}$ in the ring $\tilde{A}^{(d)}$. This ideal is generated by the element $s^d\in \tilde{A}_1^{(d)}$.
The open affine subsets $D_+(x_i)=\Spec \, \tilde{A}^{(d)}_{(x_i)}$ with $x_i\in \tilde{A}_1^{(d)}$ define a covering of $\Proj \, \tilde{A}^{(d)}$. In every ring $\tilde{A}^{(d)}_{(x_i)}$ the ideal $(I^{d} \cap \tilde{A}^{(d)})_{(x_i)}$ is generated by the element $s^d/x_i$. Therefore the homogeneous ideal $I^{d} \cap \tilde{A}^{(d)}$ defines an effective Cartier divisor. 

Now let's show that the $k$-algebra $\tilde{A}^{(m)}$ is finitely generated by elements from $\tilde{A}^{(m)}_1$ for all $m\gg 0$. By theorem \ref{schurpair} it is equivalent to show that the $k$-algebra $\tilde{B}^{(m)}$ is finitely generated by elements from $\tilde{B}^{(m)}_1$ for all $m\gg 0$.

Let $d$ be such a number that all generators of ${B}$ lie in $B_d$ and for all $n\ge d$ there are elements $P_n\in B$ with $\ord_{\Gamma}(P_n)=(0,n)$ (the same will be true also for the ring $A$). Let $\Sigma$ denote the set of all numbers $a\in \dz_+$ such that there are operators $Q$ in $B_d$ with $\ord_{\Gamma}(Q)=(*,a)$. Since $\partial_1\in B$, we have that for any $m>d$ and any $a\in \Sigma$ there are operators $Q$ in $B$ with $\ord_{\Gamma}(Q)=(m,a)$. 

Now let $m>2d$ be any number such that $\tilde{B}^{(m)}$ is finitely generated by elements from $\tilde{B}^{(m)}_1$. It suffices to show that  $\tilde{B}^{(m+1)}$ is also finitely generated by elements from $\tilde{B}^{(m+1)}_1$. To show this it suffices to show that any element from $\tilde{B}^{(m+1)}_k$ can be represented as a sum of products of  elements from $\tilde{B}^{(m+1)}_{k-1}$ and from $\tilde{B}^{(m+1)}_1$. 
The space $\tilde{B}^{(m+1)}_1$ has two special operators: $Q_1=\partial_1^{m+1}$ and $Q_2$ with $\ord_{\Gamma}(Q_2)=(0,m+1)$. 
As it follows from what we have said above, for any $l\ge 2d$ and any $i,j\in \dz_+$ such that $i+j=l$ there is an element $Q\in B$ with $\ord_{\Gamma}(Q)=(i,j)$. Thus, any element $Q\in \tilde{B}^{(m+1)}_k$ can be written as a sum of an element with the order less than $\ord_{\Gamma}(Q)$ and a product of an element from $\tilde{B}^{(m+1)}_{k-1}$ and $Q_1$ or $Q_2$. By induction we obtain our claim. 

Now the arguments above for $mC$ and $(m+1)C$ (instead of $dC$) show that $mC$, $(m+1)C$ are Cartier divisors. But then $C$ must be also a Cartier divisor. 

Now let $(\da ,\dw )$ be the pair from $k((u))((t))$ corresponding to our geometric data (see section \ref{data}). 
As it follows from section \ref{data} (namely, from \eqref{a-property}, \eqref{1-KP} and \eqref{intersection})
$$
\da (n) \cap k[[u]]\simeq A_n/A_{n-1} \simeq H^0(C,\co_C(nC)), \mbox{\quad} 
$$ 
for all $n\gg 0$ and $\da (n)$ is the image of $(C,P,\co_C(nC),u,id)$ under the Krichever map (cf. also \eqref{car-property}). From one-dimensional KP theory (see \eqref{kriv1}, \eqref{kriv3}) we have then that $\da (n)\cdot u^{-n}$ is the image of the quintet $(C,P,\co_C(nC)(-nP),u,id)$ under the Krichever map. Since $\partial_1^n\in B_n\backslash B_{n-1}$, we have that $t^{-n}u^n\in A_n/A_{n-1}$. Hence, 
$$
H^0(C,\co_C(nC)(-nP))\simeq \da (n)\cdot u^{-n}\cap k[[u]]\simeq k,
$$
and by Riemann-Roch $h^1(C,\co_C(nC)(-nP))=g_a(C)$. But then $\co_C(nC)(-nP)\simeq \co_C$ for all $n\gg 0$, i.e. $\co_C(C)\simeq \co_C(P)$.  

Now we have two possibilities for the number $h^0(C,\co_C(C))$: it is either 1 or 2. If it is equal to 1, then this means that
\begin{equation}
\label{konec}
H^0(C,\co_C(C))\simeq \da (1)\cap k[[u]]\simeq A_1/A_0,
\end{equation}
because $\partial_1\in B_1\backslash B_0$ and we have always $A_1/A_0 \subset \da (1)$. Note that we always have the embeddings
$$
\da \cap (k[[u]]((t))+k((u))[[t]]) \overset{\cdot t}\hookrightarrow \da \cdot t\cap (k[[u]]((t))+k((u))[[t]]),
$$
$$
\da \cap k((u))[[t]] \overset{\cdot t}\hookrightarrow \da \cdot t \cap k((u))[[t]],
$$
$$
\da \cap k[[u]]((t)) \simeq \da \cdot t \cap k[[u]]((t)).
$$
Thus, we have a natural linear map 
\begin{equation}
\label{embedding}
\frac{\da \cap (k[[u]]((t))+k((u))[[t]])}{(\da \cap k((u))[[t]])+(\da \cap k[[u]]((t)))} \longrightarrow 
\frac{\da \cdot t \cap (k[[u]]((t))+k((u))[[t]])}{(\da \cdot t \cap k((u))[[t]])+(\da \cdot t \cap k[[u]]((t)))}. 
\end{equation}
By \eqref{picture_cohomology1} this map coincides with the map $H^1(X,\co_X)\rightarrow H^1(X,\co_X(C))$. Let's show that the kernel of this map may contain only elements from 
\begin{equation}
\label{kernel}
(\da_1 \cap (k[[u]]((t))+k((u))[[t]]))+[(\da \cap k((u))[[t]])+(\da \cap k[[u]]((t)))],
\end{equation}
where $\da_1=\da\cap t^{-1}\cdot k((u))[[t]]$. 
From this and \eqref{konec} it follows that the map $H^1(X,\co_X)\rightarrow H^1(X,\co_X(C))$ is injective. 
Let $a\in \da \cap (k[[u]]((t))+k((u))[[t]])$ be a lift of an element from the kernel. Then $a\cdot t=a_1+a_2$, where $a_1\in (\da \cdot t \cap k((u))[[t]])$ and $a_2\in (\da \cdot t \cap k[[u]]((t)))$. Since $a_1t^{-1}\in (\da \cap k((u))[[t]])$, we have 
$$
a_2t^{-1}=a-a_1t^{-1}\in \da \cap (k[[u]]((t))+k((u))[[t]]).
$$
But $a_2t^{-1}\in \da_1$ and also gives an element of the kernel. 

Now assume that $h^0(C,\co_C(C))=2$. 
This means that the image of the sheaf $\co_C$ in $k((u))$ under the Krichever map contains an element of order $-1$. Hence, this image is isomorphic to the ring $k[u^{-1}]$, i.e. $C\simeq \dpp^1$. But then the surface $X$ must be smooth along $C$, hence $X$ must be normal since $X$ is Cohen-Macaulay  and $C$ is an ample divisor. 
Then by \cite[Th.2.5.19]{Bad} and \cite[Corol.2.5.20]{Bad} there is an open neighbourhood of $C$ isomorphic to an open neighbourhood of a line in $\dpp^2$. Since $\zeta (\cf )$ is a torsion free sheaf and $h^0(C,\zeta (\cf (nC)))=\dim W_n/W_{n-1} =n+1$ for all $n\gg 0$, we have $\zeta (\cf )\simeq \co_C$. Since $\cf$ is Cohen-Macaulay, it is locally free on the smooth open neighbourhood of $C$. Since $Cl (\dpp^2)=Cl (\dpp^2\backslash Z)\simeq \dz$ for any closed subscheme of codimension  greater than one, we must have $\cf\simeq \co_X$ on this open set (since otherwise its restriction on $C=\dpp^1$ would be not trivial).  But then (e.g. by \cite[Prop.1.1.6]{Bad}) $W_n\simeq H^0(X, \cf (nC))\simeq H^0(X, \co_X (nC))\simeq A_n$ since $X$ and $\cf$ are Cohen-Macaulay. Thus, $A\simeq k[a,b]$ and $X=\dpp^2$. Hence, $H^1(X, \co_X)=0$ and we are done. 

At last, from formulas \eqref{kriv1} and \eqref{kriv2} one can easily deduce that the sheaf $\cf$ fulfils the assumptions of proposition \ref{zero_cohomology}. Hence it is Cohen-Macaulay on $X$. 

\smallskip 

Conversely, assume that $C$ is a Cartier divisor, $\cf$ is a coherent sheaf of rank one, the map $H^1(X,\co_X)\rightarrow H^1(X,\co_X(C))$ is injective, $\co_C(C)\simeq \co_C(P)$. Then by \cite[Rem.3.3]{Ku3} the rank of the data is one. 
As we have seen above, the condition on cohomology means that  the kernel of the cohomology map \eqref{embedding} is zero. This means (see \eqref{kernel}) that all elements from $\da_1 \cap (k[[u]]((t))+k((u))[[t]])$ can be represented as a sum of an element from $\da_1\cap k[[u]]((t))$ and an element from $\da_1\cap k((u))[[t]]$. In particular, for any element from $\da (1)\cap k[[u]]$ there exists an element from $\da_1\cap k[[u]]((t))$ with the same support (multiplied by $t^{-1}$). This means that 
$$
H^0(C,\co_C(C))\simeq H^0(C,\co_C(P))\simeq \da (1)\cap k[[u]]\simeq A_1/A_0
$$
(since the rank of the data is one). 
Note that $\da (1)$ contains an element of order one (since $\da (1)$ is the image of $\co_C(P)$ under the Krichever map). Thus, there is an element in $A_1$ with the least term $ut^{-1}$. But this element will give us the operator  $\partial_1$ after applying the map $\psi_1^{-1}$ and conjugating by the Sato operator from theorem \ref{Sato}. 
\end{proof}

\subsection{Some examples}

\begin{ex}
\label{ex1}
This is an example of a surface, divisor and point for which we can calculate all possible geometric data of rank one, corresponding Schur pairs and corresponding algebras of commuting operators. More precisely, we start from a ring $A$, and describe all possible Schur pairs with the ring $A$ as a stabiliser ring. This description is possible due to using concrete formulas from the classical KP theory in dimension one; these formulas also lead to a precise description  of commuting operators. 
Notably, we will see that the map $\zeta$ restricted to the set of all sheaves from these geometric data maps this set surjectively to the dense open subset of the compactified generalized jacobian of the curve $C$ consisting of sheaves with trivial cohomologies. We will see also that for this surface there are no other rings of commuting PDOs except one ring of operators with constant coefficients. 

Consider the ring 
$$
A=k\langle \partial_2^2, \partial_2(\partial_2^2+3\partial_1^2), \partial_1\rangle \subset k[\partial_1, \partial_2].
$$ 
It is easy to see that $A\simeq k[h][z,x]/(z^2-x(x+3h^2)^2)$ (where $\partial_2(\partial_2^2+3\partial_1^2) \mapsto z$, $\partial_2^2\mapsto x$, $\partial_1\mapsto h$) and that $F=k[\partial_1,\partial_2]$, where $F$ denote the normalization of $A$. It is also clear that $A$ is a 1-quasi-elliptic strongly admissible ring. 

Recall that, having such a ring $A$, we can construct a part of geometric data, namely the surface $X$, the divisor $C$ and the point $P$ (see section \ref{data}).   This part can be described in a more explicit way: we have the embedding
$$
\tilde{A}\simeq k\langle \partial_2^2, \partial_2(\partial_2^2+3\partial_1^2), \partial_1, T\rangle \subset \tilde{F}\simeq k[\partial_2, \partial_1,T],
$$
which induces the normalisation morphism $\pi :\Proj (\tilde{F})\rightarrow \Proj (\tilde{A})$, and $X=\Proj (\tilde{A})$ can be considered as a  subscheme in the weighted projective space $\Proj (k[x,z,h,T])$, where weights of $(x,z,h,T)$ are $(2,3,1,1)$. Thus, $\partial_2=z/(x+3h^2)=x(x+3h^2)/z$ where from 
$$
\pi_*\co_{\sdp^2}=\co_X+\co_X(-1)\partial_2  
$$
and
$$
\pi_*\co_{\sdp^2}/\co_X \simeq \co_X(-1)\partial_2/\co_X(-1)\partial_2\cap \co_X\simeq \co_X(-1)/\co_X(-1)\cap \co_X\frac{1}{\partial_2}
$$
and 
$$
\co_E=\co_X/\co_X\cap \co_X(1)\frac{1}{\partial_2} \simeq \co_X/\co_X(-3)z+\co_X(-2)(x+3h^2),
$$
where $E$ is the singular locus of $X$ (cf. example 3.3 in \cite{Zhe2}). So, $E=\Proj (k[h,T])=\dpp^1$ and $\pi_*\co_{\sdp^2}/\co_X\simeq \co_E(-1)$, where from $H^1(X,\co_X)=0$. 

Let's note that, if we have a geometric data $(X,C,P,\cf ,...)$ where $X,C,P$ are defined by the ring $A$ and the sheaf $\cf$ is coherent of rank 1, then the corresponding Schur pair $(A,W)$ induces a 1-dimensional Schur pair $(A',W')$, where 
$$
A'=\overline{k((\partial_1))} \langle \partial_2^2, \partial_2(\partial_2^2+3\partial_1^2)\rangle ,
$$
and $W'$ is a space over $K=\overline{k((\partial_1))}$ generated by elements  from $W$ (thus, $A',W'\subset K((\partial_2^{-1}))$). The Schur pair $(A',W')$ corresponds to a one-dimensional geometric quintet $(C',P',\cf', \ldots )$ (see \cite[Th.4.6]{Mu} or \cite{Mul}, see also section \ref{svva}), where $C'$ is the nodal curve over $K$ and $\cf'$ is a torsion free rank one sheaf on $C'$ with $H^0(C',\cf ')=H^1(C', \cf ')=0$. It is not difficult to see that the divisor $C$ on the surface $X$ is naturally isomorphic to the nodal curve too, whose affine equation (the equation of $C\backslash P$) is $\tilde{y}^2=y(y+3)^2$. 
  
On the other hand, all torsion free rank one sheaves on this nodal curve (cf. \cite{Rego}) as well as corresponding Schur pairs of one-dimensional geometric data can be explicitly described as follows (cf. \cite[Sec 3]{SW}). The nodal curve $C'$ can be thought of as a projective line with two glued points, whose local coordinates are $a$ and $-a$ (with respect to the local coordinate $z$ on $\dpp^1\backslash P'$). It is not difficult to see that in our case 
$$
a=i\sqrt{3}\partial_1,
$$ 
and for the curve $C$ it is equal to $i\sqrt{3}$. 
Now we can use the well known formula of Baker-Akhieser function associated to a line bundle on the curve to describe the corresponding spaces of Schur pairs. Recall that the Baker-Akhieser function can be written in the form 
$\psi (x,z)\exp{(xz^{-1})}=S(x,\partial^{-1})(\exp{(xz^{-1})})$ (where $z$ is a local parameter at a point on the curve).

For the unique non-locally free sheaf $n_*(\co_{\dpp^1})$ of degree zero (where $n:\dpp^1 \rightarrow C'$ is the normalization map) the corresponding space $W'$ is equal to $K[\partial_2]$. This space comes from $W=k[\partial_1,\partial_2]$, and the pair $(A,W)$ obviously corresponds to the ring $A$ of differential operators with constant coefficients. 

The only locally free sheaf of degree zero which has non-zero cohomologies is $\co_{C'}$. 
For any locally free sheaf $\cl$ parametrized (in the moduli space) by an element $\lambda\in K^*\simeq \Pic (C')$, $\lambda\neq -1$ ($\lambda =-1$ corresponds to $\co_{C'}$) the corresponding space $W'$ is equal to 
$$
K[\partial_2]\cdot S, \mbox{\quad where $S=(1+w\partial_2^{-1})$ and } 
$$
$$
w=-a\frac{\lambda \exp{(x_2a)}-\exp{(-x_2a)}}{\lambda \exp{(x_2a)}+\exp{(-x_2a)}}.
$$

Now we can describe those one-dimensional Schur pairs $(A',W')$ (over $K$) that are induced by two dimensional Schur pairs $(A,W)$ (over $k$). It is easy to see that necessary and sufficient conditions for describing such pairs are the following: all elements from the admissible basis in $W'$ must belong to $k[\partial_1]((\partial_2^{-1}))$ and satisfy the condition $(A_{1})$. Since $A\subset A'$ and all elements from $A$ satisfy the condition $(A_{1})$, it is enough to check this property only for the two first elements from the admissible basis of $W'$. These  elements are
$$
w_0=S|_{x=0}, \mbox{\quad} w_1=\partial_2+\partial_2(w)|_{x=0}\partial_2^{-1}-(w|_{x=0})^2\partial_2^{-1}.  
$$
Thus we must have
$$
-a\frac{\lambda -1}{\lambda +1}=P(\partial_1), \mbox{\quad } -a^2\frac{4\lambda}{(\lambda +1)^2}=Q(\partial_1), 
$$
where $P,Q$ are  polynomials in $\partial_1$ with coefficients in $k$ of degree not higher than 1 and 2 correspondingly. Hence, from the first equality we have 
$$
\lambda =\frac{a-P}{a+P}\in k(\partial_1),
$$
and the second equality  holds for any such $\lambda$ for any such $P$. The same formulas show (due to theorem \ref{trivial}) that for all $-1\neq \lambda \in k^*$  the sheaf $\zeta (\cf )$ (which is defined by the space $\oplus W_{i+1}/W_i$) is a line bundle on $C$ corresponding to $\lambda$. Clearly, $\zeta (\pi_*(\co_{\sdp^2}))\simeq n_*(\co_{\sdp^1})$. Thus, the map $\zeta$ mentioned in the beginning of this example is indeed surjective. 

On the other hand, for any such $\lambda$ we can calculate operators from the corresponding ring of operators. In particular, there will be an operator of the form 
$$
S^{-1}\partial_2^2S =\partial_2^2+2 \partial_2(w)=\partial_2^2-\frac{8a^2\lambda}{(\lambda \exp{(x_2a)}+\exp{(-x_2a)})^2}.
$$
The last term of this operator can not be a polynomial in $\partial_1$, because the exponential function can not belong to an algebraic extension of the field of rational functions. So, by remark \ref{B_of_rank_r} there are no rings of PDOs with the projective spectral surface $X$  except the ring $A$ of operators with constant coefficients. 
\end{ex}

\begin{ex}
\label{ex2}
This is another example of a surface, divisor and point for which we can calculate all possible geometric data of rank one, corresponding Schur pairs and corresponding algebras of commuting operators. 
The map $\zeta$ mentioned in the previous example will be again surjective. But we will see  that for this surface there are many commutative rings of PDOs.

Consider the ring 
$$
A=k\langle \partial_2^2, \partial_2^3, \partial_1 \rangle \subset k[\partial_1, \partial_2]
$$
It is easy to see that $A\simeq k[h][z,x]/(z^2-x^3)$ (where $\partial_2^3 \mapsto z$, $\partial_2^2\mapsto x$, $\partial_1\mapsto h$) and that $F=k[\partial_1,\partial_2]$, where $F$ denotes the normalization of $A$. It is also clear that $A$ is a 1-quasi-elliptic strongly admissible ring. 

Using similar arguments from the previous example one can show that $X$ can be obtained from $\dpp^2$ by glueing one doubled projective line (cf. \cite[Sec. 3.6]{Ku3}). Thus, we have also $H^1(X,\co_X)=0$. Again as in the previous example $X$ is a cone over $C$ which is a cuspidal curve. So, we can use in this case the same ideas and notation. 

Now any Schur pair $(A,W)$ induces a $1$-dimensional Schur pair $(A',W')$ over $K$, where 
$$
A'=\overline{k((\partial_1))}\langle \partial_2^2, \partial_2^3 \rangle .
$$
For the unique non-locally free sheaf $n_*(\co_{\dpp^1})$ of degree zero the corresponding space $W'$ is equal to $K[\partial_2]$. This space comes from $W=k[\partial_1,\partial_2]$, and the pair $(A,W)$ obviously corresponds to the ring $A$ of differential operators with constant coefficients. 

The only locally free sheaf of degree zero which has non-zero cohomologies is $\co_{C'}$. 
For any locally free sheaf $\cl$ parametrized by an element $\lambda\in K\simeq \Pic (C')$, $\lambda\neq 0$ ($\lambda =0$ corresponds to $\co_{C'}$) the corresponding space $W'$ is equal to 
$$
K[\partial_2]\cdot S, \mbox{\quad where $S=(1+w\partial_2^{-1})$ and } 
$$
$$
w=\frac{1}{\lambda -x_2} .
$$
Now $w_0=S|_{x=0}=1+(1/\lambda )\partial_2^{-1}$. To find those pairs $(A',W')$ that are induced by pairs $(A,W)$ we again must have $1/\lambda =P(\partial_1)$ for some linear polynomial $P$. It is not difficult to see that for all such $\lambda$ the spaces $W'$ are induced by $W$ and that the map $\zeta$ is surjective. 

The rings of commuting operators will contain two operators: $\partial_1$ and 
$$
S^{-1}\partial_2^2S=\partial_2^2+\frac{2P(\partial_1)^2}{(1 -x_2P(\partial_1))^2} .
$$
By remark \ref{B_of_rank_r} and by proposition \ref{purity} such a ring is a ring of PDO if and only if $P(\partial_1)$ is a constant. Clearly the sheaves corresponding to such rings are the preimages of the sheaf $n_*(\co_{\sdp^1})$. 

\end{ex}

\noindent H. Kurke,  Humboldt University of Berlin, department of
mathematics, faculty of mathematics and natural sciences II, Unter
den Linden 6, D-10099, Berlin, Germany \\ \noindent\ e-mail:
$kurke@mathematik.hu-berlin.de$

\vspace{0.5cm}

\noindent A. Zheglov,  Lomonosov Moscow State  University, faculty
of mechanics and mathematics, department of differential geometry
and applications, Leninskie gory, GSP, Moscow, \nopagebreak 119899,
Russia
\\ \noindent e-mail
 $azheglov@mech.math.msu.su$

\end{document}